\def\dB{{\mathcal{B}}}
\def\dF{{\mathcal{F}}}
\def\dG{{\mathcal{G}}}
\def\dN{{\mathcal{N}}}
\def\dS{{\mathcal{S}}}
\def\bS{{\mathbb{S}}}
\DeclareMathOperator{\diam}{diam}
\def\Lip{\mathop\mathrm{Lip}} 						
\def\dist{\mathop\mathrm{dist}} 						
\newcommand{\ps}[1]{\left( #1 \right)}
\newcommand{\ck}[1]{\left\{#1 \right\}}
\newcommand{\av}[1]{\left| #1 \right|}
\def\XXint#1#2#3{{\setbox0=\hbox{$#1{#2#3}{\int}$ }
\vcenter{\hbox{$#2#3$ }}\kern-.58\wd0}}
\def\grad{\nabla}
\newtheorem{theorem}{Theorem}[section]
\newtheorem{lemma}[theorem]{Lemma}
\theoremstyle{definition}
\newtheorem{definition}[theorem]{Definition}
\theoremstyle{remark}
\newtheorem{remark}[theorem]{Remark}
\numberwithin{equation}{section}
\newcommand{\R}{\mathbb{R}}
\newcommand{\N}{\mathbb{N}}
\newcommand{\Z}{\mathbb{Z}}
\newcommand{\C}{\mathbb{C}}
\newcommand{\hd}{\mathcal{H}^d}
\newcommand{\hn}{\mathcal{H}^n}
\newcommand{\B}{\mathbb{B}}
\newcommand{\Q}{\mathbb{Q}}
\newcommand{\spt}{\mathrm{spt}}
\newcommand{\qpo}{L_Q^0}
\newcommand{\qpx}{L_Q^x}
\newcommand{\dt}{\frac{dt}{t}}
\newcommand{\wavi}{\varphi_I}
\newcommand{\wavj}{\varphi_J}
\newcommand{\ncubes}{\mathcal{D}_n}
\newcommand{\mucubes}{\mathcal{D}_\mu}
\newcommand{\dcubes}{\mathcal{D}_d}
\newcommand{\pis}{\Pi^{\circlearrowright}}
\newcommand{\pisp}{\Pi^{\circlearrowright}_{\R^n}}
\newcommand{\pixo}{\Pi^{\circlearrowright}_{L_Q^0}}
\newcommand{\pix}{\Pi^{\circlearrowright}_{L_Q^x}}
\newcommand{\pil}{\Pi^{\circlearrowright}_{L^x}}
\newcommand{\poxo}{\Pi_{L_Q^0}}
\newcommand{\cmu}{C_{\mu}}
\newcommand{\cmufi}{C_{\mu, \phi}}
\newcommand{\cmuk}{C_{\mu, k}}
\newcommand{\cmuef}{C_{f\mu}}
\newcommand{\cmuq}{C_{\mu, J(Q)}}
\newcommand{\psit}{\Phi_t}
\newcommand{\gio}{g_i^{0}}
\newcommand{\quoty}{\ps{\frac{y}{t}}}
\newcommand{\quotyx}{\ps{\frac{y-x}{t}}}
\newcommand{\quotxy}{\ps{\frac{x-y}{t}}}
\newcommand{\quotp}{\frac{\Pi^{\circlearrowright}_{L_Q^0}(y)}{t}}
\newcommand{\quotr}{\frac{\Pi^{\circlearrowright}_{\R^n}\left(R_{L_Q^x}(y-x)\right)}{t}}
\newcommand{\quotpor}{\frac{\poxo(y)}{t}}
\newcommand{\bigcubes}{\dB\dN\dG}
\newcommand{\smallcubes}{\dS\dN\dG}
\newcommand{\rot}[2]{R_{L_{#2}^{#1}}}
\newcommand{\neig}{\mathrm{Neig}}
\newcommand{\chara}{\mathbbm{1}}
\newcommand{\eqt}[1]{\ensuremath{\stackrel{#1}{=}}}
\newcommand{\leqt}[1]{\ensuremath{\stackrel{#1}{\leq}}}
\newcommand{\lesssimt}[1]{\ensuremath{\stackrel{#1}{\lesssim}}}
\newcommand{\good}{\mathrm{\textbf{Good}}}
\newcommand{\term}{\mathrm{\textbf{Term}}}
\newcommand{\Stop}{\mathop\mathrm{\textbf{Stop}}}
\newcommand{\tree}{\mathop\mathrm{\textbf{Tree}}}
\newcommand{\uptree}{\mathop\mathrm{\textbf{upTree}}}
\newcommand{\reg}{\mathop\mathrm{\textbf{Reg}}}
\newcommand\blfootnote[1]{%
  \begingroup
  \renewcommand\thefootnote{}\footnote{#1}%
  \addtocounter{footnote}{-1}%
  \endgroup
}
\numberwithin{equation}{section}
\theoremstyle{plain}
\newtheorem{sublemma}[theorem]{Sublemma}
\newtheorem{corollary}[theorem]{Corollary}
\newtheorem{notation}[theorem]{Notation}
\newtheorem{proposition}[theorem]{Proposition}
\title[Center of mass]{A square function involving the center of mass \\and rectifiability}
\author{Michele Villa}
\address{Research Unit of Mathematical Sciences, University of Oulu. P.O. Box 8000, FI-90014, University of Oulu, Finland.}
\email{michele.villa "at" oulu.fi}
\dedicatory{}
\begin{document}

\begin{center}
\maketitle

\begin{minipage}[c][][r]{300pt}
\begin{small}
\textsc{Abstract.} For a Radon measure $\mu$ on $\R^d$, define 
$C^n_\mu(x, t)= \left(\frac{1}{t^n} \left|\int_{B(x,t)} \frac{x-y}{t} \, d\mu(y)\right| \right)$. This coefficient quantifies how symmetric the measure $\mu$ is by comparing the center of mass at a given scale and location to the actual center of the ball. We show that if $\mu$ is $n$-rectifiable, then 
\begin{align*}
    \int_0^\infty |C^n_\mu(x,t)|^2 \dt < \infty \enskip \mu\mbox{-almost everywhere}.
\end{align*}
Together with a previous result of Mayboroda and Volberg, where they showed that the converse holds true, this gives a new characterisation of $n$-rectifiability. To prove our main result, we also show that for an $n$-uniformly rectifiable measure, $|C_\mu^n(x,t)|^2 \dt d\mu$ is a Carleson measure on $\spt(\mu) \times (0,\infty)$. We also show that, whenever a measure $\mu$ is $1$-rectifiable in the plane, then the same Dini condition as above holds for more general kernels. We also give a characterisation of uniform 1-rectifiability in the plane in terms of a Carleson measure condition. This uses a classification of $\Omega$-symmetric measures from \cite{villa19}. 
\end{small}
\end{minipage}
\end{center}

\blfootnote{\today \\ \textup{2010} \textit{Mathematics Subject Classification}: \textup{28A75}, \textup{28A12} \textup{28A78}.

 \noindent
\textit{Key words and phrases.} Rectifiability, tangent points, beta numbers, Hausdorff content.
\noindent
M. Villa was supported by The Maxwell Institute Graduate School in Analysis and its
Applications, a Centre for Doctoral Training funded by the UK Engineering and Physical
Sciences Research Council (grant EP/L016508/01), the Scottish Funding Council, Heriot-Watt
University and the University of Edinburgh.
}
\tableofcontents
\section{Introduction}
Rectifiable sets are a main object of study in geometric measure theory. These are sets that can be covered by (countably many) Lipschitz images of the Euclidean space, up to a set of Hausdorff measure zero. 
More generally, a \textit{measure} $\mu$ in $\R^d$ is $n$-rectifiable if there exists an $n$-rectifiable set $E$ and a Borel function $f: \R^d \to \R$ so that $\mu = f \, \hn|_E$.

In the first part of this paper, however, we will use a quantitative notion of rectifiability: that of \textit{uniformly rectifiable} (or UR) sets. First introduced by Guy David and Stephen Semmes in \cite{singularintegrals}, uniform rectifiability is a stronger property than rectifiability: one has control on how much mass in any given ball centered on the set can be covered by \textit{just one} Lipschitz image - in contrast to rectifiable sets, where in some places one may need very many images to cover just a small portion of the original set. This sub-area of GMT has very strong ties to Harmonic analysis. Indeed, not only `\textit{is there an obvious analogy between rectifiability property of sets} (this is GMT) \textit{and differentiability properties of functions} (see \cite{analysisofandon}, Introduction), but also the problems themselves that originally motivated the development of the theory of UR sets are harmonic analytic in nature, although this problems concerns more singular integrals and analytic capacity. Here's an example: consider $f: \R \to \R$ in $L^2(\R)$. Then (see \cite{stein2}) $f$ is locally absolute continuous and $f' \in L^2$ if and only if $\int_0^\infty \int_\R t^{-2} |f(x+t)+ f(x-t) - 2f(x)|^2 \, dx \frac{dt}{t} < \infty$. The quantity  $t^{-1}(f(x+t)+ f(x-t) - 2f(x))$ can be seen as a measurement of how far  $f$ is from being affine. Something along these lines is the following result by Dorronsoro (\cite{do1}). 
	Let $f \in W^{1,2}(\R^d)$ and define $
		\Omega_f (x, t) := \left(\inf_A  t^{-d}\int_{B(x, t)} t^{-2}\left(f(y) - A(y)\right)^2
		\, dy \right)^{\frac{1}{2}},
	$
	where the infimum is taken over all affine functions $A$.
	Then $ \int_{\R^d} \int_0^\infty \Omega_f(x, t)^2 \, \frac{dt}{t} dx \sim \| \nabla f
		\|_2^2.$
Since rectifiable sets are composed of Lipschitz images, it is natural to ask whether similar quantities can be designed for sets. Take a subset $E \subset \R^n$ and let
\begin{align} \label{eq:01_beta}
\beta_{E, p}^n(x,t):=\inf_L \left(\frac{1}{t^n} \int_{B \cap E} \left(\frac{\dist(y,
L)}{t}\right)^p  d \hn(y) \right)^{\frac{1}{p}},
\end{align}
A similar quantity was first introduced by Peter Jones in \cite{jones} as a tool to prove his Analyst's Traveling Salesman theorem (and so it is called the Jones-beta number), and further developed by David and Semmes in \cite{singularintegrals}, \cite{analysisofandon}, to show that if a set $E$ is $n$-Ahlfors - David regular (ADR), that is, if there exists a constant $c\geq 1$ such that $c^{-1}\, r^n \hn(B(x, r) \cap E) \leq c \, r^n$ for all $x \in E$, $r>0$, then $E$ is UR if and only if 
\begin{align} \label{eq:01_betacarleson}
    \int_{B_R} \int_0^R \beta_{E, p}^n (x, t)^2 \, \dt \, d \hn|_E(x) \leq C\, R^n.
\end{align}
Note the analogy with the result by Dorronsoro: there we studied regularity properties of functions, here those of sets - in both cases such properties are characterised quantitatively by measuring the linear approximation properties of our objects.
David and Semmes also show that a set $E$ is UR if and only if a wide class of Calder\'{o}n-Zygmund kernels are $L^2(E)$ bounded; this is precisely the problem which originated the field: on what sort of sets are Calder\'{o}n - Zygmund kernels $L^2$ bounded?

Let us go back for a moment to the world of functions: we may find further inspiration. Recently, R. Alabern, J. Mateu and J. Verdera showed in \cite{mateu-verdera}  that if $f \in W^{1, p}(\R^d)$, then $\|S(f)\|_p \sim \|\nabla f\|_p$, where
$$S(f)^2(x):= \int_0^\infty \left| t^{-d} \int_{B(x, t)} t^{-1}(f(x)- f(y)) \, dy \, \right|^2 \, \dt.$$
Appealing to analogy once more (\textit{`you can prove anything using the Analogy'}\footnote{See U.K. Le Guin, \textit{The Dispossessed}. However, the reader should be warned: analogy can be misleading, see \cite{singularintegrals}, Introduction.}), one may define a corresponding quantity for sets, or, more generally for a Radon measure $\mu$ on $\R^d$: we let the \textit{$n$-dimensional $C$-number} to be defined as
\begin{align*}
    C^n_\mu(x, t)^2= C_\mu(x,t)^2 := \left(\frac{1}{t^n} \left|\int \frac{x-y}{t} \, d\mu(y)\right| \right)^2.
\end{align*}
While a $\beta$ number gives us information on the linear approximation properties of a set or a measure, $C_\mu$ tells us about the geometry of measures by capturing how far the center of mass of our object is from the center of the ball where we are focusing our attention.
This quantity appears for different reasons other than pure analogy: Mattila, in \cite{mattila_cauchy}, while investigating for what kind of measures $\mu$ in $\C$ does the Cauchy transform exists $\mu$-almost everywhere (in the sense of principal values), gives a complete characterisation of what he calls \textit{symmetric} measures; that is, measures that satisfy $
    C_\mu(x, t) = 0 \, \mbox{ for all } x \in \spt(\mu) \mbox{ and for all } t>0.
$
He shows that any symmetric locally finite Borel measure on $\C$ is either discrete or coninuous. In the latter case, it is either the $2$-dimensional Lebesgue measure (up to a multiplicative constant) or a countable sum of $1$-dimensional Hausdorff measures restricted to equidistant affine lines. Mattila needed such characterisation to understand the geometry of tangent measures of a measure $\mu$ for which the Cauchy transform exists $\mu$-almost everywhere (in the sense of principal values) - thus to understand the geometry of $\mu$ itself. 
Briefly after, Mattila and Preiss (see \cite{Mattila-Preiss}) generalised this to the higher dimensional equivalent. 

More recently, Mayboroda and Volberg in \cite{mayvol} proved the following.
\begin{theorem} \label{theorem:mayvol}
Let $\mu$ be a finite measure with  finite and positive $n$-dimensional upper density $\mu$-almost everywhere. If 
\begin{align}
    \int_0^\infty |C^n_\mu(x,t)|^2 \dt <\infty\,\,  \mbox{ for } \mu-\mbox{almost all } x \in \R^d, 
\end{align}
then $\mu$ is $n$-rectifiable.
\end{theorem}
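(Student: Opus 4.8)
The plan is to run the blow-up machine of geometric measure theory: reduce $n$-rectifiability of $\mu$ to flatness of all of its tangent measures, and then show that the Dini condition on the $C$-number forces each tangent measure to be \emph{symmetric} in the sense of Mattila, whence flat by the Mattila--Preiss classification. Throughout put $T_{x,r}(z)=(z-x)/r$.

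First I would pin down the criterion to feed into. Since $\updens(\mu,x)<\infty$ for $\mu$-a.e.\ $x$ we have $\mu\ll\hn$, and together with $\updens(\mu,x)>0$ a.e.\ this places us in the scope of the classical rectifiability criteria of Marstrand, Mattila and Preiss: it suffices to prove that for $\mu$-a.e.\ $x$, every $\nu\in\Tan(\mu,x)$ equals $c\,\hn|_V$ for some $c>0$ and some $n$-plane $V$ (then $\Theta^n(\mu,x)$ exists $\mu$-a.e.\ and $\mu$ is $n$-rectifiable). So the whole task is to pass from $\int_0^\infty|C^n_\mu(x,t)|^2\,\dt<\infty$ to flatness of $\Tan(\mu,x)$.

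The key step --- and, I expect, the main obstacle --- is that the Dini condition is inherited by tangent measures. Fix a point $x$ at which $\int_0^\infty|C^n_\mu(x,t)|^2\,\dt<\infty$ and $0<\updens(\mu,x)<\infty$ (this holds for $\mu$-a.e.\ $x$), and let $\nu_i:=c_i^{-1}T_{x,r_i\#}\mu\to\nu\in\Tan(\mu,x)$ with the normalization making $\nu$ non-degenerate; the density bounds then force $c_i\approx r_i^n$, so $r_i^n/c_i$ is bounded. A change of variables shows that, for every $y$ and every $t$ with $\nu(\partial B(y,t))=0$,
\begin{align*}
  C^n_\nu(y,t)=\lim_{i\to\infty}\,\frac{r_i^n}{c_i}\,C^n_\mu\!\left(x+r_iy,\;r_i t\right);
\end{align*}
squaring, integrating in $\dt$ over $(0,1)$ and applying Fatou,
\begin{align*}
  \int_0^1|C^n_\nu(y,t)|^2\,\dt\ \lesssim\ \liminf_{i\to\infty}\int_0^{r_i}|C^n_\mu(x+r_iy,s)|^2\,\frac{ds}{s}.
\end{align*}
Integrating this inequality against $d\nu_i(y)$ over $B(0,R)$ and letting $i\to\infty$, and reversing the change of variables on the right, one is reduced to showing that
\begin{align*}
  c_i^{-1}\int_{B(x,Rr_i)} G_{r_i}(z)\,d\mu(z)\ \longrightarrow\ 0\quad(i\to\infty),\qquad\text{where}\quad G_\delta(z):=\int_0^\delta|C^n_\mu(z,s)|^2\,\frac{ds}{s}.
\end{align*}
We do have $G_\delta(z)\downarrow 0$ for $\mu$-a.e.\ $z$ and $c_i^{-1}\mu(B(x,Rr_i))\lesssim R^n$, but the genuinely delicate point is the domination needed to pass to the limit here (both $G_{r_i}$ and the domain shrink, and $G_1$ need not be locally $\mu$-integrable). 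Granting it, $\int_0^1|C^n_\nu(y,t)|^2\,\dt=0$ for $\nu$-a.e.\ $y$, hence $C^n_\nu(y,t)=0$ for $\nu$-a.e.\ $y$ and a.e.\ $t$; a routine cleanup --- continuity in $y$ of the $\nu$-averages $y\mapsto\int_{B(y,t)}(y-w)\,d\nu(w)$ off the countable exceptional set of radii, plus one further blow-up at a $\nu$-generic point if necessary (its tangent measures again lie in $\Tan(\mu,x)$) --- upgrades this to $C^n_\nu\equiv 0$ on $\spt\nu\times(0,\infty)$, i.e.\ $\nu$ is symmetric.

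To finish, each such $\nu$ is a non-zero symmetric measure with $\updens(\nu,y)<\infty$ at every $y\in\spt\nu$ (this bound passes from $\mu$ to $\nu$ under the $r_i^n$ normalization), so by the classification of symmetric measures of Mattila \cite{mattila_cauchy} in the plane and Mattila--Preiss \cite{Mattila-Preiss} in general, up to a multiplicative constant $\nu$ is a sum of point masses, or $\mathcal{L}^k$ restricted to a $k$-plane, or $\hn$ restricted to a countable family of equidistant parallel $n$-planes. Finiteness of $\updens(\nu,y)$ for all $y\in\spt\nu$ rules out the atomic case; positivity and finiteness of $\updens(\nu,0)$ forces $k=n$ in the Lebesgue case; and the equidistant-planes configuration is not dilation-invariant, so one more blow-up (again landing in $\Tan(\mu,x)$) collapses it to a single $n$-plane. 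Hence every $\nu\in\Tan(\mu,x)$ is $n$-flat, and the criterion of the second paragraph gives that $\mu$ is $n$-rectifiable. To repeat where the difficulty lies: a naive blow-up sees only the base point $x$, which gives merely $C^n_\nu(0,\cdot)\equiv 0$ --- far too weak to invoke Mattila--Preiss --- so all the work is in spreading the vanishing of $C^n_\nu$ to every point of $\spt\nu$, and the density hypotheses enter precisely to pin down the $r_i^n$ normalization that makes masses and $C$-numbers scale consistently.
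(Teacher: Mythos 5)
The paper does not contain a proof of Theorem~\ref{theorem:mayvol}: it is quoted from Mayboroda--Volberg \cite{mayvol} (with an independent route noted via \cite{jnt}), and the present paper establishes only the converse, Theorem~\ref{theorem:main_rectifiability}. So there is no proof in the paper to compare your write-up against; I can only assess it on its own terms, and indeed the blow-up route you sketch --- Marstrand--Mattila criterion, transfer of the Dini condition to tangent measures, symmetry, Mattila / Mattila--Preiss classification, and the case analysis ruling out atoms (infinite upper density), wrong-dimensional Lebesgue (density at $0$), and equidistant planes (a further blow-up staying in $\Tan(\mu,x)$) --- is the right skeleton, matching the cited argument.

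The problem is that you have explicitly ``granted'' the technical heart. The statement $c_i^{-1}\int_{B(x,Rr_i)}G_{r_i}(z)\,d\mu(z)\to 0$ is exactly what the theorem turns on, and it does not follow from the pointwise Dini condition by any direct domination: $G_1$ need not be locally $\mu$-integrable, the domain of integration shrinks with $i$, and $c_i^{-1}\mu(B(x,Rr_i))\lesssim R^n$ alone gives nothing. Closing it requires an Egorov/Lusin exhaustion --- restrict to a compact $F$ with $\mu(\R^d\setminus F)$ small on which $G_{1/k}\to 0$ uniformly and $\mu(B(\cdot,r))/r^n$ is uniformly controlled, pass to $\mu|_F$ (using $\Tan(\mu|_F,x)=\Tan(\mu,x)$ at $\mu$-a.e.\ density point $x\in F$), and then compare $C^n_{\mu|_F}$ to $C^n_\mu$ via the error $\mu(B(z,s)\setminus F)/s^n$, which needs its own Dini estimate at a.e.\ $z\in F$ --- and none of this is in the write-up. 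Two smaller issues you should also repair: (i) ``integrating this inequality against $d\nu_i(y)$'' does not parse, since the left-hand side involves $\nu$ and the right already carries a $\liminf_i$; what is actually needed is a Fatou-type lower semicontinuity of $\rho\mapsto\int_{B(0,R)}\int_0^1|C^n_\rho(y,t)|^2\,\dt\,d\rho(y)$ under weak-$*$ convergence, which is itself nontrivial because $\chara_{B(y,t)}$ is discontinuous; and (ii) finite and positive \emph{upper} density does not force $c_i\approx r_i^n$ along an arbitrary sequence realizing a given tangent measure --- it only gives $c_i\lesssim r_i^n$ when $c_i\approx\mu(B(x,r_i))$ --- so the implicit bound $(r_i^n/c_i)^2\lesssim 1$ you rely on needs a separate justification.
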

Recall that the $n$-dimensional upper density of $\mu$ at $x$ is given by 
\begin{align}
    \theta^{*,n}(x,\mu):= \limsup_{r \to 0} \frac{\mu(B(x, r))}{r^n}.
\end{align}
The same result appeared as a corollary of the work of Jaye, Nazarov and Tolsa in \cite{jnt}, Subsection 1.6.

In this paper we prove the converse.
\begin{theorem} \label{theorem:main_rectifiability}
Let $\mu$ be an $n$-rectifiable measure in $\R^d$. Then 
\begin{align}
    \int_0^\infty |C^n_\mu(x,t)|^2 \dt <\infty \, \, \mbox{ for } \mu-\mbox{almost all } x \in \R^d, 
\end{align}
\end{theorem}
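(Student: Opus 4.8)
The plan is to reduce to the ``nice'' case by standard rectifiability decompositions and then estimate the $C$-number against $\beta$-type quantities that are already known to satisfy Dini-type conditions almost everywhere. First, since $\mu$ is $n$-rectifiable we may write $\mu = f \hn|_E$ with $E$ an $n$-rectifiable set, and by a Besicovitch-type decomposition we can further assume $E = \bigcup_i E_i$ where each $E_i$ is a (bi-Lipschitz, or at least Lipschitz) image of a subset of $\R^n$, and $f$ is bounded above and below on $E_i$. At $\mu$-a.e.\ point $x$ there is a unique approximate tangent plane $L=L(x)$, the density $\theta^n(x,\mu)$ exists and is positive and finite, and for the purposes of an a.e.\ statement we may also assume (passing to a subset of large measure, then exhausting) that the convergence to the tangent plane and the density are uniform in the sense of a Reifenberg-type flatness with summable errors. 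The key geometric input is that $C_\mu(x,t)$ measures the failure of the center of mass to sit at $x$; if $\mu$ were exactly $\hn$ restricted to the tangent plane $L(x)$ through $x$, the integrand in $\int \frac{x-y}{t}\,d\mu(y)$ would be antisymmetric under reflection through $x$ in $L$ and the integral would vanish. So $C_\mu(x,t)$ must be controlled by how far $\mu\llcorner B(x,t)$ is from that symmetric model.

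The main step is therefore the pointwise estimate
\begin{align*}
    |C^n_\mu(x,t)| \lesssim \beta_{\mu}^{n}(x,Ct) + (\text{density oscillation of } \mu \text{ at } x \text{ at scale } t) + (\text{variation of } f),
\end{align*}
or some $L^2$-averaged variant thereof, where $\beta_\mu^n$ is an appropriate (e.g.\ $L^1$ or $L^2$) beta coefficient of the measure $\mu$ relative to the best-approximating $n$-plane. Concretely, split $\int_{B(x,t)}\frac{x-y}{t}\,d\mu(y)$ by comparing $\mu$ to $c\,\hn|_{L(x)\cap B(x,t)}$ where $c = \theta^n(x,\mu)$: the contribution of the model measure vanishes by the reflection symmetry just noted, and the error is bounded by $t^{-n-1}$ times the mass of the symmetric difference of $\mu$ and the model inside $B(x,t)$, weighted by the displacement $|x-y|\le t$. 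That error is exactly what a beta number (plus density/weight oscillation) controls, up to dimensional constants and a mild enlargement of the ball. Once this pointwise bound is in place, square it, integrate $\dt$, and invoke the already-known facts: for an $n$-rectifiable measure, $\int_0^\infty \beta_\mu^n(x,t)^2\,\dt < \infty$ $\mu$-a.e.\ (this is essentially the content of the rectifiability half of the David--Semmes / Tolsa theory, which holds pointwise a.e.\ and not merely as a Carleson condition), while the density of $f\hn|_E$ stabilizes and the tangent plane converges fast enough a.e.\ that the oscillation terms are also square-Dini summable.

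To make the a.e.\ statement clean, I would first prove the Carleson version on a uniformly rectifiable piece (which the excerpt already promises as an intermediate result): for $n$-UR $\mu$, show $\int_{B_R}\int_0^R |C_\mu^n(x,t)|^2\,\dt\,d\mu(x) \lesssim R^n$, by the same pointwise comparison to a flat model together with the David--Semmes $\beta$-Carleson estimate \eqref{eq:01_betacarleson}. Then pass from UR to general rectifiable: decompose $\mu$ (up to a null set) into countably many pieces each of which lies on a UR set with bounded density, apply the Carleson bound on each piece, handle the interaction between a fixed piece $\mu_i$ and the ``far'' part $\mu - \mu_i$ (for a density point $x$ of $\mu_i$, at small scales $t$ the ball $B(x,t)$ is almost entirely filled by $\mu_i$, so the cross term contributes a small, and summable in a dyadic sense, error), and conclude finiteness of the square function a.e.\ by a Borel--Cantelli / Fubini argument on the Carleson measure.

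\emph{Main obstacle.} The delicate point is the transition from the uniformly rectifiable (Carleson) estimate to the merely rectifiable (pointwise a.e.) one: unlike $\beta$ numbers, the $C$-number at $(x,t)$ is a genuinely \emph{nonlocal} quantity — it integrates against the measure on the whole of $B(x,t)$ — so when one restricts attention to a good piece $\mu_i$ of $\mu$, the discarded mass $\mu-\mu_i$ does not simply disappear from $C_\mu(x,t)$. One must show that, at a density point of $\mu_i$, this leftover contributes a term that is summable in $t$ along with genuine error, which requires a quantitative density argument controlling $(\mu - \mu_i)(B(x,t))/t^n$ at small scales and a careful accounting of how much the center of mass can be perturbed by this small leftover mass. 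This is exactly where the factor $\frac{x-y}{t}$, bounded by $1$ on $B(x,t)$, is essential: the perturbation is controlled by $t^{-n}(\mu-\mu_i)(B(x,t))$, which tends to $0$ $\mu_i$-a.e., and one needs this to happen fast enough (square-Dini) — obtainable by the usual trick of further subdividing $\mu_i$ so that the density ratio of the complement is as small as we like on a set of almost full measure, then exhausting.
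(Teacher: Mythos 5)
Your overall shape — first a Carleson bound for UR measures, then pass to general rectifiable — is the same as the paper's, and you correctly flag the nonlocality of $C_\mu$ as the crux of the passage. But the fix you offer does not close the gap. You claim the cross-term $t^{-n}(\mu-\mu_i)(B(x,t))$ becomes square-Dini by ``the usual trick of further subdividing $\mu_i$.'' This fails: at a density point $x$ of $\mu_i$, Lebesgue differentiation gives $(\mu-\mu_i)(B(x,t))/t^n \to 0$ but with no rate, so $\int_0^1 \big(t^{-n}(\mu-\mu_i)(B(x,t))\big)^2\,\frac{dt}{t}$ can diverge (e.g.\ if the ratio decays like $(\log 1/t)^{-1/2}$). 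Shrinking $\mu_i$ further only \emph{enlarges} $\mu-\mu_i$, so the trick moves in the wrong direction; exhaustion cannot manufacture a quantitative rate. The paper avoids the cross-term entirely via the Tolsa--Toro scheme: from the Carleson estimate one proves $L^2(\mu)\to L^2(\mu)$ boundedness of $f\mapsto \big(\int_0^\infty |C_{f\mu}(\cdot,t)|^2\,\frac{dt}{t}\big)^{1/2}$ for UR $\mu$ (via a martingale decomposition on the David lattice), then a Calder\'on--Zygmund decomposition for an arbitrary finite Borel measure $\nu$ against the AD-regular $\mu$ upgrades this to a weak $(1,1)$ bound $\mu(\{C_\nu>\lambda\})\lesssim\|\nu\|/\lambda$. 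Applying this with $\sigma_n=\hn|_{\Gamma_n}$ in the role of the UR measure (where the $\Gamma_n$ are UR sets covering $\spt\mu$ up to a null set) and with $\nu=\mu$ — the \emph{entire} rectifiable measure, far part included — gives $C_\mu<\infty$ $\sigma_n$-a.e., hence $\mu$-a.e.\ on each piece. The far part of $\mu$ is never discarded and then painfully re-incorporated; it enters as $\nu$ from the start, and the weak $(1,1)$ bound absorbs it.

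Your sketch of the Carleson step is also more optimistic than what holds. A pointwise bound $|C_\mu(x,t)|\lesssim \beta_\mu(x,Ct)+\text{osc}+\dots$ does not follow from comparison to the flat model, essentially because the $\alpha$ and $\beta$ coefficients test $\mu$ against a flat measure through \emph{Lipschitz} functions, whereas the kernel $(x-y)\chara_{B(x,t)}(y)$ has a hard cut-off. The paper handles this by expanding $\chara_{\B_n}$ in a compactly supported wavelet basis, pushing forward along a norm-preserving \emph{circular} projection onto the approximating plane (since, unlike $\beta$, the quantity $C_\mu$ has no quasi-monotonicity in the scale and is unstable under orthogonal projection), and running a stopping-time argument over the wavelet scales. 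Only the term $I(x,t)$ coming from the discrepancy between $\mu$ and its circular push-forward admits the straightforward bound by $\alpha_\mu(Q)+\dist(x,L_Q)/\ell(Q)$; the term $II(x,t)$ requires the full wavelet machinery, and this is where the bulk of the work in the UR direction lives.
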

Thus, together with Theorem  \ref{theorem:mayvol}, we have a characterisation of rectifiability.
\begin{corollary}
Let $\mu$ be a measure on $\R^d$ so that  $0< \theta^{*,n}(x,\mu)< \infty$ for $\mu$-almost all $x \in \R^d$. Then $\mu$ is $n$-rectifiable if and only if 
\begin{align}
    \int_0^\infty |C^n_\mu(x,t)|^2 \dt <\infty \enskip \enskip \mu-\mbox{almost all } x \in \R^d, 
\end{align}
\end{corollary}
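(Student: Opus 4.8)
The Corollary is a formal consequence of the two theorems already displayed: the implication ``$\int_0^\infty|C^n_\mu(x,t)|^2\,\dt<\infty$ $\mu$-a.e.\ $\Rightarrow$ $\mu$ is $n$-rectifiable'' is Theorem~\ref{theorem:mayvol} (applied after intersecting $\mu$ with a large ball so that it becomes finite, which changes neither hypothesis; the standing assumption $0<\theta^{*,n}(x,\mu)<\infty$ $\mu$-a.e.\ is exactly the one required there), and the reverse implication is Theorem~\ref{theorem:main_rectifiability}. So the real content is Theorem~\ref{theorem:main_rectifiability}, and the plan below is for that; the Corollary then follows by combining the two.

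I would split the argument into a Carleson-measure estimate for uniformly rectifiable measures and a localization step. For the uniformly rectifiable case the heart is a pointwise comparison with a flat model. If $\mu$ is $n$-ADR and, for a ball $B(x,t)$, one chooses an $n$-plane $L$ and a constant $c>0$ so that $\mu$ restricted to $B(x,Ct)$ is close to $c\,\hn|_L$, then decomposing $\tfrac{x-y}{t}$ into its components orthogonal and parallel to $L$ yields
\begin{align*}
|C^n_\mu(x,t)|\ \lesssim\ \frac{\dist(x,L)}{t}\ +\ \beta_{\mu,2}(x,t)\ +\ \alpha_\mu(x,Ct),
\end{align*}
where $\beta_{\mu,2}$ is the measure-analogue of \eqref{eq:01_beta} (with $L^2$ mass) and $\alpha_\mu(x,Ct)$ is the $t^{-n}$-normalised Wasserstein-type discrepancy between $\mu$ and the nearest multiple of a flat measure on $B(x,Ct)$; the point is that the flat model $c\,\hn|_L$ pairs to zero against the parallel component (the slice $B(x,t)\cap L$ is a ball in $L$ centred at $\pi_L(x)$) and contributes only $\sim c\,\dist(x,L)/t$ against the orthogonal one, so everything else is charged to the deviation of $\spt\mu$ from $L$ and to the deficit $\mu-c\,\hn|_L$. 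I would then run the David--Semmes corona decomposition of a UR $\mu$ into stopping-time trees, each subordinate to a Lipschitz graph $\Gamma$ of small constant, and bound the three terms over a tree respectively by the Carleson packing of the tree tops, by the $\beta$- and $\alpha$-number Carleson estimates characterising uniform rectifiability, and, on $\hn|_\Gamma$ itself, by the Dorronsoro coefficients of the graph function of $\Gamma$ (for $\hn|_\Gamma$, $|C^n_{\hn|_\Gamma}(x,t)|$ is controlled by the oscillation of the gradient of the defining function, a classically square-function-bounded quantity). Summing over the tree yields $\iint_{B(x_0,R)\times(0,R)}|C^n_\mu(x,t)|^2\,\dt\,d\mu(x)\lesssim R^n$, hence $\int_0^1|C^n_\mu(x,t)|^2\,\dt<\infty$ $\mu$-a.e.\ for UR $\mu$, and in particular for $\hn|_\Gamma$ with $\Gamma$ any $n$-dimensional Lipschitz graph.

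For a general $n$-rectifiable $\mu=f\,\hn|_E$ I would localize so that $\mu$ is finite and $f$ is bounded, write $E$ modulo an $\hn$-null set as a countable union of compact pieces of Lipschitz graphs $G_k$, and, at $\mu$-a.e.\ $x$ (a density point of the piece containing it, a Lebesgue point of $f$, and a good point for the Carleson measure on the relevant $G_k$), compare $C^n_\mu(x,t)$ with the quantity built from the approximate tangent plane $L(x,t)$. Two errors arise. The \emph{geometric} error --- the discrepancy between $\mu$ near $x$ and $\theta^n(x,\mu)\,\hn$ on the tangent plane --- is estimated exactly as in Step~1, using additionally that $\mu$-a.e.\ point of a rectifiable measure with density bounds (which hold $\mu$-a.e.\ here, since $\theta^n(x,\mu)=f(x)\in(0,\infty)$ $\mu$-a.e.) satisfies $\int_0^1(\dist(x,L(x,t))/t)^2\,\dt<\infty$, a consequence of the $\beta$-number Dini condition known for such measures, and that $\int_0^1\alpha_\mu(x,t)^2\,\dt<\infty$ a.e.\ likewise. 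The \emph{density} error, after the reduction to the tangent plane, is essentially
\begin{align*}
t^{-n-1}\left|\int_{B(x,t)}(x-y)f(y)\,dy\right|\ =\ \left|\bigl(m(t\,\cdot)\,\widehat{f}\,\bigr)^{\vee}(x)\right|,\qquad m(\eta)=\tfrac{1}{2\pi i}\,\nabla\widehat{\mathbf{1}_{B(0,1)}}(\eta),
\end{align*}
where the multiplier $m$ vanishes at $\eta=0$ (by the symmetry of the ball) and decays at infinity, so the associated square function is $L^2$-bounded and $\int_0^\infty t^{-2n-2}\bigl|\int_{B(x,t)}(x-y)f(y)\,dy\bigr|^2\,\dt<\infty$ a.e.\ because $f\in L^2$ (our $f$ is bounded with bounded support). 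The tail $\int_1^\infty$ is trivially finite since $|C^n_\mu(x,t)|\lesssim\mu(\R^d)\,\diam(\spt\mu)\,t^{-n-1}$ for large $t$. Squaring the pointwise bound, integrating $\dt$, and collecting these facts gives $\int_0^\infty|C^n_\mu(x,t)|^2\,\dt<\infty$ $\mu$-a.e.

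The hard part is Step~1: one has to compare $\mu$ on a ball with a genuinely \emph{weighted} flat model and then show that the error terms pack into Carleson measures along the corona. The feature particular to this square function, which must be used repeatedly, is that the vector weight $\tfrac{x-y}{t}$ supplies precisely the cancellation that annihilates the flat, constant-density part and nothing more, so the surviving errors really have to be controlled by $\beta$- and $\alpha$-type coefficients rather than by the crude mass $\mu(B(x,t))/t^n$ --- which is why the full strength of uniform rectifiability, not a soft argument, is needed.
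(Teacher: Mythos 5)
Your observation that the Corollary is a formal consequence of Theorem~\ref{theorem:mayvol} and Theorem~\ref{theorem:main_rectifiability} is correct, and indeed the paper gives no separate proof of it; the real work is in Theorem~\ref{theorem:main_rectifiability} (and Theorem~\ref{theorem:main_UR}), which is what your sketch addresses. That sketch, however, departs from the paper in two places, and in the first there is a genuine gap.

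For the uniformly rectifiable step you assert a pointwise bound
$|C^n_\mu(x,t)|\lesssim \dist(x,L)/t+\beta_{\mu,2}(x,t)+\alpha_\mu(x,Ct)$
and justify the $\alpha$-term by pairing $\mu-c\,\hn|_L$ against $\chara_{B(x,t)}(y)\tfrac{x-y}{t}$. That test function is \emph{not} Lipschitz, so $\dist_{B}$ does not control the pairing directly; smoothing the indicator on an annulus of thickness $\epsilon t$ gives a Lipschitz constant $\sim(\epsilon t)^{-1}$ (hence an $\alpha/\epsilon$ term) plus a boundary-annulus error $\lesssim\epsilon$ with no sign cancellation, and optimising in $\epsilon$ yields $\sqrt{\alpha_\mu}$, which is \emph{not} square-summable along the corona even though $\alpha_\mu$ is. This is exactly the obstruction the paper flags in its remark preceding the circular projection: there is no quasi-monotonicity in $t$ for $C_\mu$, and one cannot replace $\mu$ by its push-forward to a plane and keep the ball. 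The paper's remedy is to replace orthogonal projection by the circular projection $\pis$ (which preserves $|x-y|$, hence commutes with the sharp cutoff), and then to expand the truncated linear profile $y\mapsto y\cdot e_i\,\chara_{\B}(y)$ in a compactly supported $C^1$ wavelet basis, so that each term is genuinely Lipschitz with a controllable constant; the contributions of wavelets meeting $\partial\B$ are then summed with geometric decay via Lemma~\ref{lemma:wavcoeff}, which is what recovers a square-summable bound. Your corona/Dorronsoro outline does not have a replacement for this mechanism.

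Your second step is also a different route: you propose a direct pointwise comparison at a.e.\ $x$ with the tangent plane, splitting a ``geometric'' error (charged to $\beta$- and $\alpha$-Dini conditions for rectifiable measures) and a ``density'' error (handled by a Fourier multiplier bound with $m(\eta)=\tfrac{1}{2\pi i}\nabla\widehat{\mathbf 1_{\B}}(\eta)$). The paper instead runs the Tolsa--Toro scheme: first $L^2(\mu)\to L^2(\mu)$ boundedness of the operator $C_\mu$ on a UR measure (Proposition~\ref{proposition:L2}, via a martingale/$\Delta_P$ decomposition together with the Carleson estimate of Section~3), then a weak-$(1,1)$ bound with respect to arbitrary finite $\nu$ via a Calder\'on--Zygmund decomposition of $\nu$ against $\mu$ (Proposition~\ref{proposition:L1weak}), and finally the standard covering of an $n$-rectifiable $\mu$ by compact pieces $E_k$ to conclude $C_\mu<\infty$ $\mu$-a.e.\ from the weak bound applied to $\nu=\mu|_{E_k}$. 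Your Fourier argument as written is not well posed --- the tangent plane $L(x,t)$ and the density vary with $(x,t)$, so the quantity is not literally a dilation-family multiplier acting on a fixed $\widehat f$ --- and the $\alpha$-Dini hypothesis you invoke for general rectifiable measures would itself need a citation or a proof. In short: the reduction of the Corollary to Theorems~\ref{theorem:mayvol} and~\ref{theorem:main_rectifiability} is fine, but the proposed proof of Theorem~\ref{theorem:main_rectifiability} has a real gap in the UR estimate and an unsubstantiated alternative in the rectifiable localisation, both of which the paper handles by the circular-projection/wavelet and $L^2$-plus-weak-$(1,1)$ machinery respectively.
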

Theorem \ref{theorem:main_rectifiability} will follow from the other result of this paper.
\begin{theorem} \label{theorem:main_UR}
Let $\mu$ be an $n$-ADR measure on $\R^d$. If $\mu$ is uniformly rectifiable then, for each ball $B$ centered on $ \spt(\mu) $ and with radius $r_B$, 
\begin{align}
    \label{eq:01_cCarleson}
    \int_{B}\int_0^{r_B} |C_\mu(x, t)|^2 \, \dt\, d\mu(x) \lesssim r_B^n.
\end{align}
\end{theorem}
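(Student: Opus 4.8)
The plan is to prove \eqref{eq:01_cCarleson} by a corona/stopping-time argument that reduces the estimate to a square-function bound on Lipschitz graphs, where the center-of-mass coefficient is controlled by Dorronsoro-type coefficients. First I would discretize: using only that $\mu$ is $n$-ADR, pass from the ball $B$ and the integral $\int_0^{r_B}$ to the David/Christ dyadic lattice $\mathcal{D}$ of $\spt\mu$, so that with $\mathfrak{c}(Q)^2:=\frac{1}{\mu(Q)}\int_Q\int_{\ell(Q)}^{2\ell(Q)}|C_\mu(x,t)|^2\,\dt\,d\mu(x)$ the estimate \eqref{eq:01_cCarleson} becomes (up to the harmless passage between continuous and dyadic scales, together with the trivial bound $|C_\mu(x,t)|\lesssim1$) the packing statement $\sum_{Q\subseteq B}\mathfrak{c}(Q)^2\mu(Q)\lesssim\mu(B)$. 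I would also reduce to $\mu=\hn|_E$ with $E=\spt\mu$ an $n$-UR set, the general ADR case being handled by the same argument with the density treated as in the oscillation step below.

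Next I would apply the David--Semmes corona decomposition of $E$: $\mathcal{D}$ splits into a family $\mathcal{B}$ with a Carleson packing condition and a family $\mathcal{G}=\bigsqcup_{\mathcal{T}}\mathcal{T}$ of coherent trees, each tree $\mathcal{T}$ carrying an $\epsilon$-Lipschitz graph $\Gamma_{\mathcal{T}}$ that approximates $E$ bilaterally through all scales of $\mathcal{T}$, and with the tops $Q(\mathcal{T})$ again Carleson-packed. For $Q\in\mathcal{B}$ one has $\mathfrak{c}(Q)\lesssim1$, so these contribute $\lesssim\mu(B)$ by packing; it then remains to prove the per-tree bound $\sum_{Q\in\mathcal{T}}\mathfrak{c}(Q)^2\mu(Q)\lesssim\mu(Q(\mathcal{T}))$ and sum over trees using the packing of $\{Q(\mathcal{T})\}$.

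On a tree, fix $Q\in\mathcal{T}$, $x\in Q$, $t\simeq\ell(Q)$, and let $L=L_Q$ be the approximating plane with orthogonal projection $\pi=\pi_L$. Decomposing $x-y=(\pi(y)-y)+(x-\pi(x))+(\pi(x)-\pi(y))$ inside $\int_{B(x,t)}(x-y)\,d\mu$, the first term contributes $\lesssim\beta_{\mu,1}(x,t)$, which is square-Carleson by David--Semmes; the second contributes $\lesssim\dist(x,\Gamma_{\mathcal{T}})/t$, which — since along $\mathcal{T}$ one has $\dist(x,\Gamma_{\mathcal{T}})\lesssim\epsilon\,\ell_{\min}(x)$, where $\ell_{\min}(x)$ is the scale at which $x$ leaves $\mathcal{T}$ — integrates over the tree to $\lesssim\epsilon^2\mu(Q(\mathcal{T}))$. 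The remaining, genuinely new, term is the tangential deviation $t^{-n-1}|P_L\!\int_{B(x,t)}(x-y)\,d\mu(y)|$, i.e. the distance between $P_L x$ and the $L$-projection of the barycentre of $\mu|_{B(x,t)}$. Here I would pass to the graph parametrization $\Phi(z)=(z,A(z))$, $\|\nabla A\|_\infty\le\epsilon$: the quantity becomes an average of $(z-x',A(z)-A(x'))$ over the near-ball set $U_{x,t}=\Phi^{-1}(B(x,t))\subseteq\R^n$, and the key point is that for the \emph{affine} model (linear $A$) this average vanishes exactly, by the spherical symmetry of a Euclidean ball. What survives is therefore controlled by the Dorronsoro coefficient $\Omega_A(x',t)$ of $A$, which is square-Carleson by Dorronsoro's theorem (here $\nabla A\in L^\infty$ with $\int_{Q_0}|\nabla A|^2\lesssim\epsilon^2\mu(Q_0)$), plus — when a density is present — its $L^1$-oscillation over $B(x',t)$, controlled similarly (directly, or via Tolsa's $\alpha$-numbers). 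Summing over $\mathcal{T}$ gives the per-tree bound.

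The delicate point, and the technical heart of the argument, is the sharp truncation $\chara_{B(x,t)}$ in the definition of $C_\mu$: the vector field $y\mapsto(x-y)\chara_{B(x,t)}(y)$ is not Lipschitz, so one cannot directly test the flat/graph comparison against it, and naively smoothing the cut-off produces a boundary error of size $\sim t\,\mu\bigl(B(x,2t)\setminus B(x,t)\bigr)$, which is not a Carleson measure on its own and is too large even after per-tree bookkeeping. The resolution is that the truncation is compatible with the model: a Euclidean ball is exactly symmetric, so the affine part of $E$ (and the constant part of the density) produces \emph{no} boundary error, and the genuine contribution of the spherical shell $\partial B(x,t)$ measures only how far $U_{x,t}$ is from a ball — hence it is second order and governed again by the non-affine part of $A$ (a Dorronsoro term) and by the oscillation of the density. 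Making this precise — decomposing the shell error into dyadic pieces, interpolating between the Lipschitz estimate and the trivial mass bound, and arranging the bookkeeping so that every term is square-summable per tree rather than merely bounded per scale — is where the main work lies.
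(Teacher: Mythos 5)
Your proposal takes a genuinely different route from the paper. You organize the estimate via the David--Semmes corona decomposition into Lipschitz-graph trees and then use Dorronsoro coefficients for the graph function $A$, whereas the paper never invokes the corona decomposition: it discards the cubes with $\alpha_\mu(1000Q)>\delta^2$ using Tolsa's $\alpha$-number packing (Theorem~\ref{theorem:tolsaalpha}), and on the remaining cubes it decomposes $C_\mu$ through the \emph{circular} projection $\pis$ (satisfying $|\pis(y)|=|y|$) onto the approximating plane, then expands the resulting vector field $y\mapsto y\cdot e_i\,\chara_{\B_n}(y)$ in a compactly supported wavelet basis and runs a stopping-time argument in the style of Chousionis--Garnett--Le--Tolsa. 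The circular projection is precisely the device that neutralizes the difficulty you put your finger on: since it carries $B^d(x,t)$ exactly onto $B^n(x,t)$, the sharp truncation is preserved and there is \emph{no} shell error to estimate, while the cost of the deformation is absorbed into $\alpha_\mu(Q)$ and $\dist(x,L_Q)/\ell(Q)$, which are both square-Carleson (Theorem~\ref{theorem:tolsaalpha}, Lemma~\ref{lemma:tolsa2008}). Your route instead faces the shell head-on, which is plausible but is also where the bulk of the unwritten work sits.

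Two cautions on your sketch of that shell step. First, the claim that the boundary contribution is ``second order'' is imprecise: after subtracting the affine model through $\Phi(x')$, what remains is first order in a Dorronsoro-type coefficient, and squaring and summing is what makes it Carleson --- one should not expect any quadratic smallness pointwise. Second, to make the affine model's shell contribution vanish you must pin the affine approximant to pass through $\Phi(x')$, and the resulting coefficient is not the infimal $L^2$ Dorronsoro number but rather a pointwise-anchored variant; its Carleson bound does not follow from Dorronsoro's theorem alone but requires, in addition, an estimate of Alabern--Mateu--Verdera type for $t^{-1}\bigl|A(x')-\fint_{B(x',t)}A\bigr|$, or equivalently the pointwise-distance Carleson bound $\sum_{Q\subset R}\int_Q(\dist(x,L_Q)/\ell(Q))^2\,d\mu\lesssim\mu(R)$ which the paper imports as Lemma~\ref{lemma:tolsa2008}. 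You gesture at this via ``$\int_{Q_0}|\nabla A|^2\lesssim\epsilon^2\mu(Q_0)$'' and the $\alpha$-numbers, but it deserves to be isolated as a separate ingredient: without it the per-tree sum of the shell errors is only $O(\epsilon^2)$ per scale, which does not sum.
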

The condition in \eqref{eq:01_cCarleson} is analogous to \eqref{eq:01_betacarleson}; it says that $|C_\mu(x, t)|^2 \, d\mu(x) \dt$ is a Carleson measure on $\spt(\mu) \times (0, \infty)$. 

It is natural to ask whether a similar characterisation can be proven for more general kernels. Note that $K(x) = |x| Id\ps{\frac{x}{|x|}}$, where $Id : \bS^{d-1} \to \bS^{d-1}$ is the identity map of the sphere to itself. Now suppose we perturb the identity, in the following sense. Let $\Omega: \bS^{d-1} \to \bS^{d-1}$ be an odd, twice continuously differentiable map which is also bi-Lipshitz with constant $1+\delta_\Omega$. Given a Radon measure $\mu$,  one can define the corresponding perturbed $C_\mu^n$ number, i.e.
\begin{align*}
    C_{\Omega, \mu}^n (x,t)^2 := \ps{ \frac{1}{t^n} \left| \int \frac{|x-y| \Omega\ps{\frac{x-y}{|x-y|}}}{t} \, d\mu(y) \right| }^2. 
\end{align*}

We prove the following characterisation of uniform rectifiability in the plane. 

\begin{theorem} \label{t:UR-Omega}
Let $\Omega$ as above and suppose that $\delta_\Omega$ is sufficiently small\footnote{We require $\delta_\Omega$ to be smaller than a universal constant. For example, $\delta_\Omega < 1/10$ will work.}. Let $\mu$ be an Ahlfors $1$-regular measure on $\C$. Then $\mu$ is uniformly $1$-rectifiable if and only if $|C_{\Omega, \mu}^1(x,t)|^2 \, \dt d\mu(x)$ is a Carleson measure on $\spt(\mu) \times(0,\infty)$. 
\end{theorem}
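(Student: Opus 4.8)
The plan is to prove the two implications separately. The direction ``$\mu$ uniformly $1$-rectifiable $\Rightarrow$ Carleson'' follows the scheme of the proof of Theorem~\ref{theorem:main_UR} with essentially no change, while the converse is where the classification of $\Omega$-symmetric measures from \cite{villa19} enters; this is the substantial part. Write $K_\Omega(v)=\av{v}\,\Omega\ps{v/\av v}$, an odd, $1$-homogeneous, $C^2$ kernel on $\C\setminus\{0\}$ with $\av{K_\Omega(v)}\le(1+\delta_\Omega)\av v$ and $\av{\grad K_\Omega}\lesssim 1$ on $\bS^1$. The key algebraic fact behind Theorem~\ref{theorem:main_UR} survives the perturbation: if $L$ is a line through $x$, then $B(x,t)\cap L$ is a segment symmetric about $x$, so by oddness $\int_{B(x,t)\cap L}K_\Omega(x-y)\,d\mathcal{H}^1(y)=0$. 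Hence, approximating $\mu|_{B(x,t)}$ by a flat measure on the best line through a point near $x$, one bounds $\av{C^1_{\Omega,\mu}(x,t)}$ by $\beta_{\mu,2}^1(x,t)$ plus an oscillation-of-density term and a tail term exactly as for the unperturbed kernel, the Lipschitz bound on $K_\Omega$ making the error estimate quantitative. Summing and invoking the David--Semmes characterisation of uniform rectifiability via the $\beta$-Carleson condition \eqref{eq:01_betacarleson} (see \cite{singularintegrals,analysisofandon}) yields \eqref{eq:01_cCarleson} with $C_\mu$ replaced by $C_{\Omega,\mu}$. This direction works in every dimension and uses only the oddness of $\Omega$, not the smallness of $\delta_\Omega$.

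For the converse I would reduce uniform rectifiability to the bilateral weak geometric lemma (BWGL) of David--Semmes: it suffices to show that for every $\varepsilon>0$ the set $\mathcal B_\varepsilon$ of pairs $(x,t)$ for which $\mu|_{B(x,t)}$ is not $\varepsilon$-close, bilaterally and in the Hausdorff sense, to a flat measure is a Carleson set. The heart of the matter is a quantitative rigidity statement: for each $\varepsilon>0$ there are $M=M(\varepsilon)\ge 1$ and $\delta=\delta(\varepsilon)>0$ such that if $\mu$ is $1$-ADR with a fixed constant, $x_0\in\spt\mu$, $t_0>0$, and
$\tfrac{1}{\mu(B(x_0,Mt_0))}\int_{B(x_0,Mt_0)}\int_{t_0/M}^{Mt_0}\av{C^1_{\Omega,\mu}(y,s)}^2\,\frac{ds}{s}\,d\mu(y)<\delta$,
then $\mu$ is $\varepsilon$-flat in $B(x_0,t_0)$. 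Granting this, one upgrades the $L^2$-Carleson bound \eqref{eq:01_cCarleson} for $C_{\Omega,\mu}$ to the BWGL: if $\mathcal B_\varepsilon$ failed the Carleson condition there would be a ball $B_0$ on which $\mathcal B_\varepsilon$ carries arbitrarily large $\dt\,d\mu$-mass, while a Vitali-type covering argument applied to the Carleson measure $\av{C^1_{\Omega,\mu}}^2\,\dt\,d\mu$ shows that the set where the $M(\varepsilon)$-average above exceeds $\delta(\varepsilon)$ has $\dt\,d\mu$-mass controlled by the Carleson constant; choosing a point of $\mathcal B_\varepsilon\cap(B_0\times(0,r_{B_0}))$ off that set contradicts the rigidity statement. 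Since $\mathcal B_\varepsilon$ is then Carleson for every $\varepsilon$, David--Semmes gives that $\mu$ is uniformly rectifiable.

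The rigidity statement itself is proved by contradiction and a blow-up argument. Since its failure for a given $\varepsilon_0$ would produce counterexamples for every pair $(M,\delta)$, one may extract a sequence of $1$-ADR measures $\mu_k$ (with fixed constant), rescaled so that $x_0=\mathbf 0$, $t_0=1$ and $\mu_k(B(\mathbf 0,1))$ is normalised, that are not $\varepsilon_0$-flat in $B(\mathbf 0,1)$ but with the averaged square function over $B(\mathbf 0,M_k)\times(M_k^{-1},M_k)$ less than $\delta_k$, where we arrange $M_k\to\infty$ and $\delta_k M_k\to 0$. Passing to a weak limit $\mu_\infty$, which is again $1$-ADR, and using that $K_\Omega$ is continuous and bounded on $B(y,s)$ together with the fact that $\mu_\infty(\partial B(y,s))=0$ for a.e.\ $s$, one checks that $C^1_{\Omega,\mu_\infty}\equiv 0$ on $\spt\mu_\infty\times(0,\infty)$, i.e.\ $\mu_\infty$ is $\Omega$-symmetric. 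Because $\delta_\Omega$ is small, the classification of \cite{villa19} forces $\mu_\infty=c\,\mathcal{H}^1|_L$ for some line $L$; but weak convergence of the uniformly $1$-ADR measures $\mu_k$ to a flat measure implies they are eventually $o(1)$-bilaterally-flat in $B(\mathbf 0,1)$, contradicting the choice of $\mu_k$.

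The main obstacle is precisely this blow-up/classification step. Concretely one must (i) establish that $(x,t)\mapsto C^1_{\Omega,\mu}(x,t)$ is continuous under weak convergence of the measures at a.e.\ scale (a portmanteau argument using negligibility of spheres), so that vanishing of the averaged square function does pass to $\mu_\infty$, and that this continuity survives moving the base points $x_k\to x$; (ii) invoke the classification of \cite{villa19} for the perturbed kernel, which is exactly where the smallness of $\delta_\Omega$ and the $C^2$ regularity of $\Omega$ are needed to guarantee that the only $1$-ADR $\Omega$-symmetric measures in the plane are flat; and (iii) carry out the Vitali/maximal-function estimate that converts the $L^2$-Carleson bound into pointwise-in-a-box smallness outside a Carleson set. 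The remaining geometric-measure-theoretic bookkeeping (rescalings, ADR normalisations, Hausdorff-distance consequences of weak convergence under ADR, and the easy-direction error terms) is routine but lengthy.
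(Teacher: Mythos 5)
Your plan for the hard direction (Carleson $\Rightarrow$ UR) is essentially the one the paper uses: a compactness/blow-up argument in the style of \cite{tolsa2014} which, thanks to the continuity of $K_\Omega$ and negligibility of spheres, produces a $1$-ADR weak limit $\nu$ with $C^1_{\Omega,\nu}(x,t)=0$ on $\spt\nu\times(0,\infty)$, at which point the classification of $\Omega$-symmetric measures (Theorem~1.4 of \cite{villa19}, using the smallness of $\delta_\Omega$) forces $\nu$ flat and closes the contradiction; the paper phrases this through the smoothed coefficient $C^1_{\Omega,\mu,\phi}$ and notes that the smooth Carleson condition is dominated by the sharp one, but this is a technicality. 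The genuine gap is in your treatment of the easy direction. You claim that one can bound $\av{C^1_{\Omega,\mu}(x,t)}$ by $\beta^1_{\mu,2}(x,t)$ plus a $\dist(x,L)/t$ term plus a tail, and then sum using the David--Semmes $\beta$-Carleson characterisation \eqref{eq:01_betacarleson}. This does not work as stated, for precisely the reason the paper flags in its remark before the circular-projection construction: $C_\mu$ involves a \emph{sharp} cut-off $\chi_{B(x,t)}$, which is not Lipschitz, so the difference $\int_{B(x,t)}K_\Omega(x-y)\,d(\mu-c\,\mathcal{H}^1|_L)(y)$ is not controlled by the Lipschitz-dual $\alpha$-number (or by $\beta_2$ alone, which does not see density fluctuations). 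If you try to repair this by smoothing the cut-off at scale $\delta$ you trade a Lipschitz bound $\sim t/\delta$ against a boundary term $\sim\delta$, and optimising gives $\av{C^1_{\Omega,\mu}(x,t)}\lesssim\sqrt{\alpha_\mu(B(x,t))}$, whose square is $\alpha$ and not $\alpha^2$ -- not summable in the Carleson sense. This is exactly why the paper resorts to the circular projection $\pis$ (which preserves $\av{y-x}$ and hence commutes with the sharp cut-off), decomposes the resulting function through compactly supported wavelets, and controls the wavelet coefficients by $\alpha$-numbers at all intermediate scales with a stopping-time argument borrowed from \cite{tolsa2014}.

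A smaller but still real error: you assert the easy direction ``works in every dimension and uses only the oddness of $\Omega$, not the smallness of $\delta_\Omega$.'' The paper's own proof of Proposition~\ref{p:UR-C-Omega} for $\Omega\neq Id$ is explicitly restricted to $n=1,\,d=2$, because the wavelet argument requires $K_\Omega$ to carry the approximating $n$-plane $L^x_Q-x$ to an $n$-plane through the origin; this holds for lines in $\C$ (an odd bi-Lipschitz map of $\bS^1$ sends antipodal pairs to antipodal pairs, so $K_\Omega$ sends a line through $0$ to a line through $0$), but fails for general $n$-planes in $\R^d$ when $\Omega\neq Id$. So even the UR $\Rightarrow$ Carleson direction of Theorem~\ref{t:UR-Omega} is, in the present paper, a genuinely two-dimensional statement.
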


\begin{remark}
In the case $\Omega= Id$, that the Carleson measure condition implies uniform rectifiability follows from \cite{jnt}. However, for general $\Omega$, this is novel and is an application of Theorem 1.4 in \cite{villa19}. 
\end{remark}

A counterpart of Theorem \ref{theorem:main_rectifiability} holds in this case, too. 
\begin{theorem} \label{t:R-Omega}
Let $\mu$ be a $1$-rectifiable measure on $\C$. Then 
\begin{align*}
    \int_{0}^\infty |C_{\Omega, \mu}(x,t)|^2 \, \dt < \infty \mbox{ for } \mu\mbox{-almost all } x \in \C.
\end{align*}
\end{theorem}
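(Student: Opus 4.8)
The plan is to deduce Theorem \ref{t:R-Omega} from the Carleson-measure estimate of Theorem \ref{t:UR-Omega}, following the same scheme by which Theorem \ref{theorem:main_rectifiability} is obtained from Theorem \ref{theorem:main_UR}. Write $K_\Omega(z):=|z|\,\Omega(z/|z|)$, so that $K_\Omega$ is odd, $|K_\Omega(z)|=|z|$, and (since $\Omega\in C^2$ is bi-Lipschitz with constant close to $1$) $K_\Omega$ is globally Lipschitz with constant $\lesssim1$; with this notation $C_{\Omega,\mu}(x,t)=t^{-2}\big|\int_{B(x,t)}K_\Omega(x-y)\,d\mu(y)\big|$.

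\emph{Reductions.} Write $\mu=f\,\mathcal H^1|_E$ with $E$ $1$-rectifiable and $f$ Borel; after discarding the $\mu$-null sets $\{f=0\}$ and $\{f=\infty\}$ we assume $0<f<\infty$ on $E$. Since $\mu$ is finite, $\int_1^\infty C_{\Omega,\mu}(x,t)^2\,\dt\le\mu(\C)^2\!\int_1^\infty t^{-3}\,dt<\infty$, so it is enough to control $\int_0^1 C_{\Omega,\mu}(x,t)^2\,\dt$. By the structure theorem for $1$-rectifiable sets, $\mathcal H^1$-a.e.\ point of $E$ lies on some set $\Gamma_i$ that is a bi-Lipschitz image of a closed interval; each $\Gamma_i$ is Ahlfors $1$-regular and uniformly $1$-rectifiable, and $\mathcal H^1(E\setminus\bigcup_i\Gamma_i)=0$. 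Put $E_i:=E\cap\Gamma_i$ and $\nu_i:=\mathcal H^1|_{\Gamma_i}$; then $\mu(E\setminus\bigcup_iE_i)=0$, so by countable additivity it suffices to fix $i$ and prove $\int_0^1 C_{\Omega,\mu}(x,t)^2\,\dt<\infty$ for $\mu$-a.e.\ $x\in E_i$.

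\emph{Transfer and comparison.} Applying Theorem \ref{t:UR-Omega} to the uniformly $1$-rectifiable measure $\nu_i$ gives that $|C_{\Omega,\nu_i}(x,t)|^2\,\dt\,d\nu_i(x)$ is a Carleson measure; testing it against one ball centered on the compact set $\Gamma_i$ of radius larger than $1$ yields $\int_0^1 C_{\Omega,\nu_i}(x,t)^2\,\dt<\infty$ for $\nu_i$-a.e., hence $\mathcal H^1$-a.e., hence (as $\mu|_{E_i}\ll\mathcal H^1|_{E_i}$) $\mu$-a.e.\ $x\in E_i$. For such $x$, the reverse triangle inequality and $|K_\Omega(x-y)|\le t$ on $B(x,t)$ give
\[
\big|C_{\Omega,\mu}(x,t)-f(x)\,C_{\Omega,\nu_i}(x,t)\big|\;\le\;\frac1{t^2}\left|\int_{B(x,t)}K_\Omega(x-y)\,d\big(\mu-f(x)\nu_i\big)(y)\right|\;\le\;\mathcal E(x,t),
\]
where, splitting $\mu-f(x)\nu_i$ over $E_i$, $E\setminus E_i$ and $\Gamma_i\setminus E_i$,
\[
\mathcal E(x,t):=\frac1t\int_{B(x,t)\cap E_i}|f(y)-f(x)|\,d\mathcal H^1(y)+\frac{\mu(B(x,t)\setminus E_i)}t+f(x)\,\frac{\nu_i(B(x,t)\setminus E_i)}t.
\]
Since $C_{\Omega,\mu}^2\le2f(x)^2C_{\Omega,\nu_i}^2+2\mathcal E^2$ and $\int_0^1 f(x)^2C_{\Omega,\nu_i}(x,t)^2\,\dt<\infty$ by the above, the proof reduces to showing $\int_0^1\mathcal E(x,t)^2\,\dt<\infty$ for $\mu$-a.e.\ $x\in E_i$.

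\emph{The main obstacle.} This last estimate is the heart of the matter, and it is where rectifiability is genuinely used: the elementary facts available at $\mu$-a.e.\ $x\in E_i$ --- that $x$ is a Lebesgue point of $f$ for $\mathcal H^1|_E$, a density-one point of $E_i$ and of $E$, and a density point of $E_i$ in $\Gamma_i$ --- only give $\mathcal E(x,t)\to0$ as $t\to0$, which is strictly weaker than $L^2(\dt)$-integrability. One must instead show that the set of $x\in E_i$ for which one of the three quotients in $\mathcal E$ tends to its limit too slowly to be square-Dini summable is $\mu$-null; concretely, that $\int_0^1\big(\mathcal H^1(B(x,t)\cap E_i)^{-1}\!\int_{B(x,t)\cap E_i}|f-f(x)|\,d\mathcal H^1\big)^2\dt<\infty$ (a.e.\ square-Dini continuity of densities of rectifiable measures) and $\int_0^1\big(\mu(B(x,t)\setminus E_i)/t\big)^2\dt<\infty$, $\int_0^1\big(\nu_i(B(x,t)\setminus E_i)/t\big)^2\dt<\infty$ $\mu$-a.e.\ on $E_i$ (so that $B(x,t)$ is, in a square-summable sense, asymptotically swallowed by $E_i$, resp.\ by $\Gamma_i$). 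These are purely measure-theoretic, kernel-free statements, established via Fubini/covering and maximal-function arguments after a further decomposition enforcing uniform upper and lower density bounds on each $E_i$; they are precisely the estimates already carried out in the proof of Theorem \ref{theorem:main_rectifiability}, and since the only place the kernel enters the present reduction is through the bound $|K_\Omega(z)|\le|z|$, that proof applies here verbatim with Theorem \ref{t:UR-Omega} substituted for Theorem \ref{theorem:main_UR}. Granting them, the displays above close the argument.
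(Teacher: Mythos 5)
Your proposal takes a different route from the paper's and has a genuine gap at exactly the step you flag as ``the main obstacle.'' The paper does not prove Theorem \ref{theorem:main_rectifiability} (nor Theorem \ref{t:R-Omega}) by a pointwise comparison of $C_{\Omega,\mu}(x,t)$ with $f(x)\,C_{\Omega,\nu_i}(x,t)$; it first proves $L^2(\mu)$-boundedness of the square-function operator on a uniformly rectifiable $\mu$ (Proposition \ref{proposition:L2}, which uses the Carleson estimate), then derives the weak-type bound $\mu(\{C_\nu>\lambda\})\lesssim\|\nu\|/\lambda$ of Proposition \ref{proposition:L1weak} via the Calder\'on--Zygmund decomposition of measures (Theorem \ref{theorem:czmeasure}), and finally applies that weak-type bound with control measure $\mathcal{H}^1$ on each Lipschitz piece and $\nu$ the rectifiable measure itself. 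The CZ argument is not a technicality: it is what preserves the odd kernel's cancellation through the mean-zero bad parts $b_j$ (and, in the $L^2$ part, through $\int\Delta_Pf\,d\mu=0$), with only the Lipschitz bound \eqref{e:grad-est} on $K_\Omega$ needed. So there are no ``estimates already carried out in the proof of Theorem \ref{theorem:main_rectifiability}'' that match your error term $\mathcal E$, and you cannot borrow them.

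Moreover, $\mathcal E$ as you define it is not square-Dini summable in general, because bounding $|K_\Omega(x-y)|\le t$ on $B(x,t)$ discards all the cancellation that the square function actually exploits. On a chord-arc $\Gamma_i$ with $\mathcal H^1(B(x,t)\cap\Gamma_i)\sim t$, your first term is comparable to the local $L^1$-modulus $\mathcal H^1(B(x,t)\cap\Gamma_i)^{-1}\int_{B(x,t)\cap E_i}|f-f(x)|\,d\mathcal H^1$, and the Lebesgue differentiation theorem gives only that this tends to $0$ a.e., with no rate; likewise for $\mu(B(x,t)\setminus E_i)/t$ and $\nu_i(B(x,t)\setminus E_i)/t$. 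Already $\mu=(1+\chara_A)\,\mathcal H^1|_L$ on a line segment $L$, with $A\subset L$ measurable and chosen so that the density defect $\mathcal H^1(B(x,t)\cap L\setminus A)/t$ decays slower than square-Dini on a positive-measure subset of $A$, makes $\int_0^1\mathcal E(x,t)^2\,\dt=\infty$ there, even though $C_{\Omega,\mu}(x,t)$ itself (which still sees the cancellation in $K_\Omega$) must remain square-Dini summable by the theorem. These quantitative a.e.\ rates are of the same depth as the result you are proving; establishing them would require re-running the $L^2$/weak-type machinery, not a covering or maximal-function argument. The correct adaptation is the one the paper intends: check that each use of the kernel in Section~4 (oddness, $|K_\Omega(z)|\le|z|$, \eqref{e:grad-est}) carries over to $K_\Omega$, and substitute Proposition \ref{p:UR-C-Omega} for Proposition \ref{proposition:ns_ur_c}, which is also why the statement is restricted to $n=1$, $d=2$. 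Your reductions in the first paragraph and the application of the Carleson estimate to $\nu_i$ are fine; it is the comparison step that must be abandoned.
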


\begin{remark}
We will not prove Theorem \ref{t:UR-Omega} and \ref{t:R-Omega} in great details, since the proofs are very similar to those for the case where $\Omega=Id$; we will highlight the places where a slight change is needed. However, we will briefly illustrate on how to apply Theorem 1.4 from \cite{villa19} in proving one direction of Theorem \ref{t:UR-Omega}.
\end{remark}

\begin{remark}
Now, a remark added two year later, in 2022. \textit{A posteriori}, this work finds the following further motivation. In an upcoming article with J. Azzam and M. Mourgoglou \cite{a-m-v}, we show that a form of Dorronsoro's theorem holds for functions defined on uniformly rectifiable sets. As explained there, the current work could (work in progress) be used to show a variation of \cite{a-m-v}, based on \cite{mateu-verdera}, where the coefficients involved \textit{are purely metrical}. This might be of interest in the context of analysis in metric space.
\end{remark}

\subsection{Outline of the proof}
We first show that if a set is uniformly rectifiable, then we have the Carleson estimate \eqref{eq:01_cCarleson}. To show this, we follow the strategy in \cite{tolsa2014}, where the same is shown for a different square function, involving differences in densities. To prove Theorem \ref{theorem:main_rectifiability}, we then borrow the techniques and ideas from \cite{tolsatoro}, where the same is shown, but again, for the square function mentioned before. 

\subsection{Acknowledgement}
I would like to thank Jonas Azzam, my supervisor for his help and support. I would also like to thank Xavier Tolsa and Raanan Schul for useful conversations.

\section{Preliminaries}

We collect some notions and theorem from the literature and some lemmas which will be needed later. 

\subsection{Notation}
We gather here some notation and some results which will be used later on.
We write $a \lesssim b$ if there exists a constant $C$ such that $a \leq Cb$. By $a \sim b$ we mean $a \lesssim b \lesssim a$.
In general, we will use $d\in \N$ to denote the dimension of the ambient space $\R^d$, while we will use $n$, with $n\leq d-1$, to denote the `dimension' of a measure $\mu$, in the sense of $n$-Ahlfors regularity.

For sets $A,B \subset \R^n$, we let
\begin{align*}
    \dist(A,B) := \inf_{a\in A, b \in B} |a-b|.
\end{align*}
For a point $x \in \R^n$ and a subset $A \subset \R^n$, 
\begin{align*}
    \dist(x, A):= \dist(\{x\}, A)= \inf_{a\in A} \dist(x,a).
\end{align*}
We write 
\begin{align*}
    B(x, t) := \{y \in \R^n \, |\,|x-y|<t\},
\end{align*}
and, for $\lambda >0$,
\begin{align*}
    \lambda B(x,t):= B(x, \lambda t).
\end{align*}
At times, we may write $\B$ to denote $B(0,1)$. When necessary we write $B^n(x,t)$ to distinguish a ball in $\R^n$ from one in $\R^d$, which we may denote by $B^d(x, t)$. 

We will also write
\begin{align*}
\beta_\mu^{p,d}(x,t):= \beta_\mu^{p, d}(B(x,t)).
\end{align*}

Let $A \in \R^n$ and $0< \delta \leq \infty$. Set
\begin{align*}
    \mathcal{H}^d_\delta (A) := \inf \left\{\sum (\diam (A_i))^d \, |\, A \subset \cup_i A_i \, \mbox{and } \diam(A_i) \leq \delta \right\}.
\end{align*}
The $d$-dimensional Hausdorff measure of $A$ is then defined by
\begin{align*}
\hd(A) := \lim_{\delta \to 0} \mathcal{H}^d_\delta(A).
\end{align*}

\subsection{Intrinsic cubes with small boundaries}
The following construction, due to David in \cite{wavelets}, provides us with a dyadic decomposition of the support of an AD-regular measure. Such construction has been extended by Christ in \cite{christ} to spaces of homogeneous type and further refined by Hyt\"{o}nen and Martikainen in \cite{hytonen}. Here is the construction. 

\begin{theorem} \label{theorem:ADcubes}
Let $\mu$ be an $n$-AD regular measure in $\R^d$. There exists a collection $\mucubes$ of subsets $Q \subset \spt(\mu)$ with the following properties. 
\begin{enumerate}
    \item We have
    \begin{align*}
        \mucubes = \bigcup_{j \in \Z} \mucubes^j,
    \end{align*}
    where $\mucubes^j$ can be thought as the collection of cubes of sidelength $2^{-j}$. 
    \item For each $j \in \Z$, 
    \begin{align*}
        \spt(\mu) = \bigcup_{Q \in \mucubes^j} Q.
    \end{align*}
    \item If $j \leq i$, $Q \in \mucubes^j$, $Q' \in \mucubes^i$, then either $Q \subset Q'$ or else $Q \cap Q' = \emptyset$.
    \item If $j \in \Z$ and $Q \in \mucubes^j$, then there exists a constant $C_0\geq 1$ so that
    \begin{align*}
       &  C_0^{-1}2^{-j} \leq \diam(Q) \leq C_0 2^{-j}, \mbox{ and } \\
        & C_0^{-1}2^{-jn} \leq \mu(Q) \leq C_0 2^{-jn}.
    \end{align*}
    \item If $j \in \Z$, $Q \in \mucubes^j$ and $0<\tau<1$, then 
    \begin{align*}
        \mu \left(\left\{ x \in Q \, |\, \dist(x, \spt(\mu) \setminus Q) \right\}\right) \leq C \tau^{\frac{1}{C}} 2^{-nj}.
    \end{align*}
\end{enumerate}
\end{theorem}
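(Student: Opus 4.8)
The plan is to build the families $\mucubes^j$ from maximal separated nets organised into a tree --- a classical construction going back to David \cite{wavelets} (see also \cite{christ,hytonen}) --- then to verify properties (1)--(4) essentially by inspection and to concentrate the real effort on the small-boundary estimate (5). For each $j\in\Z$ fix a maximal $2^{-j}$-separated subset $\mathcal Z_j=\{z^j_\alpha\}_\alpha$ of $\spt\mu$, arranged so that $\mathcal Z_j\subseteq\mathcal Z_{j+1}$ for every $j$ (any $2^{-j}$-separated set extends to a maximal $2^{-j-1}$-separated one). Maximality supplies the two facts used throughout: $\{B(z^j_\alpha,2^{-j})\}_\alpha$ covers $\spt\mu$, while $\{B(z^j_\alpha,2^{-j}/2)\}_\alpha$ is pairwise disjoint. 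For $z\in\mathcal Z_{j+1}$ let its parent $p_j(z)\in\mathcal Z_j$ be $z$ if $z\in\mathcal Z_j$ and otherwise the point of $\mathcal Z_j$ nearest to $z$, ties broken by a fixed rule; then $|z-p_j(z)|<2^{-j}$, and composing parent maps turns $\bigsqcup_j\mathcal Z_j$ into a tree. To each $x\in\spt\mu$ I attach a coherent branch: a sequence $(\zeta_j(x))_{j\in\Z}$ with $\zeta_j(x)\in\mathcal Z_j$, $p_j(\zeta_{j+1}(x))=\zeta_j(x)$ and $|x-\zeta_j(x)|<C2^{-j}$ for all $j$. Such a branch exists because, taking at a fine scale $N$ the point of $\mathcal Z_N$ nearest to $x$ and following its ancestors downward, the displacements from $x$ form a geometric series, so the scale-$j$ ancestor stays in the finite set $B(x,C2^{-j})\cap\mathcal Z_j$; letting $N\to\infty$ and extracting a diagonal subsequence produces a coherent limit. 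Set $Q^j_\alpha:=\{x\in\spt\mu:\zeta_j(x)=z^j_\alpha\}$, and let $\mucubes^j$ consist of the nonempty $Q^j_\alpha$, with $\mucubes=\bigcup_j\mucubes^j$.

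Properties (1)--(3) are then immediate. For each fixed $j$ the sets $\{Q^j_\alpha\}_\alpha$ partition $\spt\mu$ since $\zeta_j$ is single-valued, giving (1) and (2); and if $x\in Q^i_\beta$ with $j\le i$, coherence forces $\zeta_j(x)=p_j\circ\cdots\circ p_{i-1}(z^i_\beta)$, a point depending on $\beta$ alone, so every scale-$i$ cube lies inside a single scale-$j$ cube --- hence any two cubes are nested or disjoint (the one of smaller sidelength inside the larger), which is (3). For (4), the bound $|x-\zeta_j(x)|<C2^{-j}$ gives $Q^j_\alpha\subseteq B(z^j_\alpha,C2^{-j})$, so $\diam Q^j_\alpha\lesssim 2^{-j}$ and, by the upper AD-regularity of $\mu$, $\mu(Q^j_\alpha)\lesssim 2^{-jn}$; the matching lower bounds need one extra step, namely arranging that each nonempty $Q^j_\alpha$ captures a fixed fraction of $B(z^j_\alpha,2^{-j})\cap\spt\mu$ --- either by forcing $\zeta_j(x)=z^j_\alpha$ whenever $|x-z^j_\alpha|<c2^{-j}$ in the branch selection, or by merging the (rare) thin fibres into a neighbour --- after which lower AD-regularity yields $\mu(Q^j_\alpha)\gtrsim 2^{-jn}$ and $\diam Q^j_\alpha\gtrsim 2^{-j}$.

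The genuinely delicate point --- and the main obstacle --- is (5), the estimate $\mu(\{x\in Q:\dist(x,\spt\mu\setminus Q)\le\tau 2^{-j}\})\lesssim\tau^{1/C}2^{-jn}$ for $Q\in\mucubes^j$. I would prove it by a layering argument: for each $\ell\ge j$, bound the part of this collar that is seen at scale $2^{-\ell}$ by the scale-$\ell$ descendant cubes $R\subseteq Q$ with $\dist(R,\spt\mu\setminus Q)\lesssim 2^{-\ell}$, convert a count of such $R$ into a measure bound using the $2^{-\ell}$-separation of $\mathcal Z_\ell$ together with AD-regularity, and then sum over $\ell$. The difficulty is that one cannot simply count boundary cubes scale by scale --- there is no surface measure available for a general AD-regular $\mu$ --- so one must build a fixed geometric buffer into the construction (for instance defining the cubes via slightly shrunken balls, so that pushing the collar in by one scale costs a definite fraction of its mass) and iterate; this self-improvement is exactly what produces the fractional exponent $1/C$ rather than $1$, and it is the only place where the lower and upper AD-regularity of $\mu$ are used in tandem. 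Alternatively, one may randomise the choice of the nets $\mathcal Z_j$ and show that the \emph{expected} collar measure is $\lesssim\tau^{1/C}2^{-jn}$, then select a good realisation, as in the random dyadic-cube constructions. The remaining points --- Borel measurability of the cubes, countability of the collection, and the precise constants --- are routine bookkeeping.
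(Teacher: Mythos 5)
The paper does not prove this theorem: it states it and immediately refers the reader to Appendix~1 of David \cite{wavelets} (with Christ \cite{christ} and Hyt\"onen--Martikainen \cite{hytonen} as related constructions). Your sketch follows precisely that line of attack --- nested maximal $2^{-j}$-separated nets, a parent map, branches assigned to points, cubes as fibres --- so at the level of strategy you are reproducing the cited proof rather than departing from it. On that score there is nothing to compare.

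That said, what you have written is a blueprint rather than a proof, and the two places you flag as delicate are genuinely incomplete, not just unpolished. For the lower bound in (4), the inequality $|x-\zeta_j(x)|<C2^{-j}$ does not by itself put $B(z^j_\alpha,c\,2^{-j})\cap\spt\mu$ inside $Q^j_\alpha$: a point $x$ very close to $z^j_\alpha$ can have its scale-$\ell$ ancestor (for some $\ell$ slightly larger than $j$) land near the Voronoi boundary and get routed to a different scale-$j$ cube. Your suggested remedies (forcing $\zeta_j(x)=z^j_\alpha$ on a small ball, or merging thin fibres) are the right instinct, but each one requires verification that the tree structure, hence property (3), survives the modification; in David's actual construction this is arranged by building the cubes top-down as unions $Q_z^j=\bigl(B(z,c_12^{-j})\cap\spt\mu\bigr)\cup\bigcup_{\text{children }z'}Q_{z'}^{j+1}$ and checking by induction that the small balls around distinct net points at a fixed scale stay in distinct cubes. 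For (5), you correctly identify that naive scale-by-scale counting gives only a bounded, not a vanishing, collar estimate, and that one needs a self-improving geometric buffer or a randomised net; but you do not carry either scheme out, and this estimate is the entire content of the theorem --- (1)--(4) alone hold for any reasonable Voronoi-type partition. So: correct approach, honest about the gaps, but (4)-lower and (5) are asserted rather than proven. Given that the paper itself treats this as a black box, the cleanest resolution is to do as the paper does and cite David's Appendix~1 for the full argument, or else to commit to one of the two fixes for (5) (the deterministic buffered-ball induction, or the random-shift argument of Hyt\"onen--Martikainen) and carry it through; as it stands the hard half of the theorem is missing.
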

For a proof of this, see Appendix 1 in \cite{wavelets}.

\begin{notation}
For $Q \in \mucubes^j$, we set
\begin{align}
    \ell(Q) := 2^{-j}. \label{eq:cubelength}
\end{align}
We will denote the center of $Q$ by $z_Q$. Furthermore, we set
\begin{align}
    B_Q := B(z_Q, 3 \diam(Q)).
\end{align}
\end{notation}

\subsection{The weak constant density condition}
We follow the definition given in \cite{tolsa2014}. Let $\mu$ be an $n$-AD regular measure on $\R^d$. We denote by $A(c_0, \epsilon)$ the set of points $(x, t) \subset \spt(\mu) \times (0, \infty)$ such that there exists a Borel measure $\sigma=\sigma_{x, t}$ which satisfies the following three conditions. 
\begin{enumerate}
    \item $\spt(\sigma) = \spt(\mu)$. 
    \item The measure $\sigma$ is $n$-AD regular with constant $c_0$.
    \item It holds 
    \begin{align}
       & |\sigma(B(y,r)) - r^n | \leq \epsilon \, r^n \nonumber \\
       &  \mbox{ for all } y \in \spt(\mu) \cap B(x, t) \, \mbox{ and for all } 0 < t < r.
    \end{align}
\end{enumerate}
\begin{definition} \label{definition:wcd}
A Borel measure $\mu$ on $\R^d$ is said to satisfy the weak constant density condition (WCD), if there exists a constant $c_0 >0$ such that the complement in $\spt(\mu) \times (0, \infty)$ of the set $A(c_0, \epsilon)$ defined above is a Carleson set for every $\epsilon>0$, that is, for every $\epsilon >0$, there exists a constant $C(\epsilon)>0$ so that
\begin{align*}
    \int_0^R \int_{B(x, R)} \chara_{\left(\spt(\mu) \times (0\, \infty)\right)\setminus A(c_0, \epsilon)}(x, t) \, d\mu(x) \, \dt  \leq C(\epsilon) \, R^n.
\end{align*}
for all $x \in \spt(\mu) $ and $R>0$.
\end{definition}
The WCD condition was firstly introduced by David and Semmes in \cite{singularintegrals}, Section 6. There it was proven that if a set $E$ is $n$-uniformly rectifiable, then the $n$-dimensional Hausdorff measure restricted to $E$ satisfies WCD. In that case, $\sigma_{x, t}$ was simply the push forward measure of $\hd|_E$ onto the best approximating plane. Shortly after, in \cite{analysisofandon}, they proved the converse for the dimensions $n=1,2, d-1$. More recently, Tolsa in \cite{tolsaUNI}, proved the converse for all dimension. We thus have the following theorem
\begin{theorem}[{\cite{singularintegrals}, \cite{analysisofandon}, \cite{tolsaUNI}}] \label{theorem:wcd_iff_ur}
Let $n \in (0, d)$ be an integer. An AD-regular measure $\mu$ on $\R^d$ us $n$-uniformly rectifiable if and only if it satisfies WCD.
\end{theorem}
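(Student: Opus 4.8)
The plan is to prove the two implications separately, the forward implication (uniform rectifiability $\Rightarrow$ WCD) being considerably easier than its converse.

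For UR $\Rightarrow$ WCD I would start from the bilateral weak geometric lemma, an equivalent formulation of uniform rectifiability due to David--Semmes: for every $\epsilon>0$ the set of pairs $(x,t)\in\spt(\mu)\times(0,\infty)$ for which $\spt(\mu)\cap B(x,t)$ fails to lie within Hausdorff distance $\epsilon\, t$ of some affine $n$-plane $L=L_{x,t}$ (together with $L_{x,t}$ failing to be within $\epsilon t$ of $\spt(\mu)$) is a Carleson set. For such a ``good'' pair $(x,t)$, $\mu$ restricted to a neighbourhood of $B(x,t)$ is, after orthogonal projection onto $L_{x,t}$, a bi-Lipschitz copy of a piece of $L_{x,t}$; one then defines the comparison measure $\sigma_{x,t}$ by pushing a suitably normalized multiple of $\hn|_{L_{x,t}}$ back onto $\spt(\mu)$ along the inverse of this projection, and gluing to $\mu$ itself away from $B(x,t)$ via a partition of unity. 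Verifying the three defining conditions of $A(c_0,\epsilon')$ — uniform AD-regularity of $\sigma_{x,t}$, $\spt(\sigma_{x,t})=\spt(\mu)$, and $|\sigma_{x,t}(B(y,r))-r^n|\le\epsilon' r^n$ throughout $B(x,t)$ — reduces to the flatness estimate from the bilateral geometric lemma together with a Dorronsoro-type control of the $L^2$ oscillation of $\mu$ about $L_{x,t}$; the exceptional set is contained in a finite union of the Carleson sets produced by the geometric lemma, so it is itself Carleson.

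For WCD $\Rightarrow$ UR, which is the substantive direction, I would aim to verify the $L^2$-$\beta$ Carleson packing condition of David--Semmes, namely $\int_{B(x,R)}\int_0^R \beta_\mu^{2,d}(y,t)^2\,\dt\,d\mu(y)\lesssim R^n$, which for an AD-regular measure is equivalent to uniform rectifiability. The heuristic mechanism is a blow-up argument: WCD forces that at $\mu$-almost every $x$ and at ``most'' scales $t$ the rescaled measure $\mu$ is weakly close to an $n$-AD-regular measure whose density is exactly Euclidean on a ball; by the rigidity of such measures (the Kowalski--Preiss / Mattila--Preiss circle of results, $\mathrm{cf.}$ \cite{Mattila-Preiss}, which for integer $n$ ultimately forces flatness — this is elementary for $n=1,2,d-1$, the cases handled by David--Semmes, and is the content of \cite{tolsaUNI} for general $n$) every such limit is supported on an affine $n$-plane. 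The real work is to make this quantitative with scale-uniform constants: one discards the Carleson-many ``bad'' pairs supplied by WCD, runs a stopping-time/corona argument on the remainder, and converts the propagation of the $\epsilon$-approximate constant-density property down the tree into smallness of $\beta_\mu^{2,d}$ at a definite proportion of scales, which then sums to the packing bound.

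I expect the main obstacle to be exactly this quantification in WCD $\Rightarrow$ UR: upgrading the qualitative rigidity ``an $n$-AD-regular measure with exactly Euclidean density on a ball is flat'' to an effective, perturbation-stable dichotomy at every scale, and then organizing the bookkeeping so that the $\epsilon$-losses accumulated along a corona decomposition do not degrade as one descends and remain summable against the Carleson bound on the WCD-exceptional set. The delicacy is that the constant-density hypothesis holds only up to an error $\epsilon$ at each scale, so the rigidity input must be robust under that error uniformly over scales and locations — this is the technical heart, resolved for general $n$ by Tolsa in \cite{tolsaUNI} and by more geometric arguments of David--Semmes in the low (co)dimensional cases.
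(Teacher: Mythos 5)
This theorem is not proved in the paper at all: it is stated as a quotation from \cite{singularintegrals}, \cite{analysisofandon}, and \cite{tolsaUNI}, and the surrounding text offers only a one-sentence description of the forward construction (``$\sigma_{x,t}$ was simply the push forward measure of $\hn|_E$ onto the best approximating plane'') together with a historical account of who proved the converse and for which codimensions. There is therefore no ``paper's own proof'' to compare your sketch against; you have reconstructed, at the level of strategy, what the cited references do.

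With that caveat, your outline is a fair high-level account of the literature. Two remarks. First, for UR $\Rightarrow$ WCD you build $\sigma_{x,t}$ by pulling a multiple of $\hn|_{L_{x,t}}$ \emph{back} onto $\spt(\mu)$, whereas the paper (following David--Semmes) describes $\sigma_{x,t}$ as the push-forward of $\hn|_E$ \emph{onto} the approximating plane. These are different measures; your version is actually the one consistent with condition (1) of Definition \ref{definition:wcd} as written here ($\spt(\sigma)=\spt(\mu)$), and the discrepancy reflects a small mismatch in the paper between its stated definition of $A(c_0,\epsilon)$ and its informal description of the David--Semmes construction. Either variant works, and David--Semmes's original formulation of WCD does not insist on the support condition. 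Second, for WCD $\Rightarrow$ UR you correctly identify where the real difficulty lies (upgrading the classification of uniform/constant-density measures to a perturbation-stable, scale-uniform statement, then feeding it through a corona/stopping-time decomposition against the $\beta$-packing criterion), but you do not actually carry it out — which is appropriate here, since this paper also defers that direction entirely to \cite{analysisofandon} for $n=1,2,d-1$ and to \cite{tolsaUNI} for general $n$. Just be aware that what you have written is a road map for those references, not a self-contained argument, and that the ``technical heart'' you flag is genuinely where Tolsa's paper spends most of its effort.
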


\subsection{The $\beta$ and the $\alpha$ numbers}
\label{subsection:betaalpha}
As mentioned in the introduction, $\beta$ numbers were firstly used by Jones in \cite{jones} to understand the geometry a set or of a measure. Below we will need another quantity, introduced by Tolsa some ten years ago in \cite{tolsa2008}. They are the so called $\alpha$ numbers or coefficients, and they are defined as follows. For two Radon measures $\mu$, $\nu$ on $\R^d$, and a ball $B$ with radius $r_B$, set
\begin{align}
    \dist_B (\mu, \nu) := \sup\left\{ \left| \int f \, d\mu - \int f\, d\nu\right| \, \, | \, \Lip(f) \leq 1 \mbox{ and } \spt(f) \subset B\right\}. 
\end{align}
One can see that $\dist_B$ defines a metric on the set of Radon measures supported on $B$. Fix now an $n$-ADR measure on $\R^d$. Let $Q$ be either an instrinsic cube, i.e. $Q \in \mucubes$ or let $Q$ be an actual dyadic cube of $\R^d$ which intersects $\spt(\mu)$. Recall that $B_Q= B(z_Q, 3 \diam(Q))$ where $z_Q$ is the center of the cube. Then define
\begin{align}
    \alpha_\mu^n(Q):= \frac{1}{\ell(Q)^{n+1}} \inf_{c \geq 0, L} \dist_{B_Q}(\mu, c \hn|_L).
\end{align}
Tolsa showed the following. 
\begin{theorem}[{\cite{tolsa2008}, Theorem 1.2}] \label{theorem:tolsaalpha}
Let $\mu$ be an $n$-ADR measure. The following are equivalent:
\begin{enumerate}
    \item $\mu$ is UR.
    \item For any cube $R \in \mucubes$, we have
    \begin{align*}
        \sum_{Q \in \mucubes, Q \subset R} \alpha(Q)^2 \mu (Q) \leq C \mu(Q), 
    \end{align*}
    with $C$ independent of $R$. 
\end{enumerate}
\end{theorem}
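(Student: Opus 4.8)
The plan is to deduce both directions from the David--Semmes characterization of uniform rectifiability by Jones' $\beta$-numbers, i.e.\ from \eqref{eq:01_betacarleson} (equivalently its dyadic form over the cubes of Theorem \ref{theorem:ADcubes}), together with the corona decomposition that accompanies that characterization.

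\textbf{From (2) to (1).} This is the easier direction, and it rests on the \emph{pointwise} comparison
\[
\beta_{\mu,1}^n(B_Q) \;\lesssim\; \alpha_\mu^n(Q), \qquad Q \in \mucubes .
\]
To prove it, fix $c\geq 0$ and a plane $L$ nearly realizing the infimum defining $\alpha_\mu^n(Q)$, and test $\dist_{B_Q}(\mu, c\,\hn|_L)$ against the (up to normalization) $1$-Lipschitz function $f(y) = \min\{\dist(y,L),\ell(Q)\}\,\eta(y)$, where $\eta$ is a cutoff for $B_Q$. Since $f$ vanishes on $L$ the $c\,\hn|_L$-term drops out, leaving $\int_{B_Q}\dist(y,L)\,d\mu(y) \lesssim \dist_{B_Q}(\mu, c\,\hn|_L) \lesssim \ell(Q)^{n+1}\alpha_\mu^n(Q)$, which is the displayed inequality after dividing by $\ell(Q)^{n+1}$ (one may freely shrink $B_Q$ to a comparable ball where $L$ is known to pass close to $z_Q$). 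Squaring, summing over $Q\subset R$, and invoking hypothesis (2) gives the Carleson estimate $\sum_{Q\subset R}\beta_{\mu,1}^n(B_Q)^2\,\mu(Q)\lesssim \mu(R)$; hence $\mu$ is uniformly rectifiable, since the David--Semmes theorem holds for $\beta_p$ with $p\geq 1$, in particular for $p=1$.

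\textbf{From (1) to (2).} This is the substantial direction. Here the pointwise inequality runs the wrong way — $\alpha_\mu^n(Q)$ also records how the \emph{mass} of $\mu$ is distributed, not merely where $\spt\mu$ sits — so I would pass through a corona decomposition of $\{Q\in\mucubes : Q\subset R\}$, available because $\mu$ is UR: it partitions into trees $\mathcal{T}$ whose tops obey a Carleson packing $\sum_{\mathcal{T}}\mu(\mathrm{Top}(\mathcal{T}))\lesssim\mu(R)$, and on each $\mathcal{T}$ the support of $\mu$ is well approximated, at every scale of $\mathcal{T}$, by a small-constant Lipschitz graph $\Gamma_\mathcal{T}$ with defining function $A_\mathcal{T}$ over a plane $V_\mathcal{T}$; using moreover that UR implies the weak constant density property (Theorem \ref{theorem:wcd_iff_ur}), one can arrange that on $\mathcal{T}$ the measure $\mu$ compares to $\hn|_{\Gamma_\mathcal{T}}$ up to a Carleson-summable density defect. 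For $Q\in\mathcal{T}$ one then takes $L=V_\mathcal{T}$ and $c$ an average density, and estimates $\dist_{B_Q}(\mu, c\,\hn|_{V_\mathcal{T}})$ by splitting the transport into (i) moving $\mu$ onto $\hn|_{\Gamma_\mathcal{T}}$; (ii) projecting $\hn|_{\Gamma_\mathcal{T}}$ vertically onto $\hn|_{V_\mathcal{T}}$, whose cost is governed by the deviation of $A_\mathcal{T}$ from its best affine approximation on $B_Q$; and (iii) the Jacobian correction $\sqrt{1+|\nabla A_\mathcal{T}|^2}-1\approx \frac{1}{2}|\nabla A_\mathcal{T}|^2$, needed because $\hn|_{\Gamma_\mathcal{T}}$ is not exactly a constant multiple of $\hn|_{V_\mathcal{T}}$. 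Terms (ii) and (iii) are Carleson-summable over $Q\in\mathcal{T}$ by the classical Dorronsoro/Jones estimates for small-constant Lipschitz graphs, term (i) is summable by the corona--WCD bookkeeping, and summing over the Carleson-packed tops yields the estimate in (2).

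\textbf{The main obstacle.} The delicate point is the implication from (1) to (2), and within it step (i): reconciling $\mu$ with a \emph{constant} multiple of surface measure on $\Gamma_\mathcal{T}$. The fact that $\spt\mu$ hugs $\Gamma_\mathcal{T}$ says nothing a priori about how $\mu$ spreads its mass, so one genuinely needs the weak constant density information (or must build the necessary density control directly into the corona construction), and then must route every error term either through a single tree's telescoping sum or onto the Carleson-packed family of tree tops, with no loss anywhere. Organizing this bookkeeping correctly is the technical heart of the proof.
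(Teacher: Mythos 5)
The paper does not prove this theorem; it is cited verbatim as Theorem 1.2 of Tolsa's 2008 paper \cite{tolsa2008}, so there is no internal proof to compare against. I therefore assess your sketch against the known argument.

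Your implication $(2)\Rightarrow(1)$ is correct and is the standard route. The pointwise inequality $\beta_{\mu,1}^n(Q)\lesssim\alpha_\mu^n(Q)$, obtained by testing $\dist_{B_Q}(\mu,c\,\hn|_L)$ against a Lipschitz cutoff of $\dist(\cdot,L)$, is exactly Remark 3.3 of \cite{tolsa2008}, which the paper records at \eqref{eq:02_50}; combined with the David--Semmes $\beta_1$-Carleson characterization this closes the implication.

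Your plan for $(1)\Rightarrow(2)$ --- corona onto small-constant Lipschitz graphs, then transport $\mu\to\hn|_{\Gamma_{\mathcal T}}\to\hn|_{V_{\mathcal T}}$ with Dorronsoro controlling the graph and Jacobian terms --- has the right shape, and you are right that matching mass in step (i) is the crux. But the appeal to the weak constant density condition is both vaguer and heavier than the actual argument. WCD asserts the existence, for most $(x,t)$, of an \emph{auxiliary} measure $\sigma_{x,t}$ of nearly constant density; it does not by itself yield a Carleson-summable bound for $\dist_{B_Q}(\mu, c_{\mathcal T}\hn|_{\Gamma_{\mathcal T}})$, and converting the former into the latter is precisely the bookkeeping you still need to carry out --- you have asserted the conclusion rather than derived it. Tolsa instead reads the density control directly off the David--Semmes corona construction: the stopping conditions already stabilize density oscillation within a tree, so one can take $c$ in $\alpha_\mu(Q)=\inf_{c,L}(\cdots)$ to be an average density over the tree and charge the density defect to the Carleson-packed stopping cubes, with no independent invocation of WCD. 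So step (i) as written is a gap: it is a nontrivial claim left unjustified, and the cleaner route is through the corona packing rather than through WCD.

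A minor typographical remark: the right-hand side in item (2) of the theorem statement should read $C\mu(R)$, not $C\mu(Q)$.
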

Tolsa's motivation to introduce the $\alpha$ coefficient was, again, the study of the relationship between UR measures and $L^2(\mu)$ boundedness of Calder\'{o}n-Zygmund operators; for applications, see for example Theorem 1.3 in \cite{tolsa2014}, \cite{mastolsa2}, \cite{tolsaCauchy}.

The two coefficient $\beta$ and $\alpha$ are related by the following inequality. See \cite{tolsa2008}, Remark 3.3.
\begin{align} \label{eq:02_50}
    \frac{1}{\ell(Q)^n} \int_{B(z_Q, 2 \ell(Q)} \frac{\dist(y, L_Q)}{\ell(Q)} \, d\mu(y) \lesssim \alpha_\mu(Q).
\end{align}
We will need the following auxiliary lemma (where we merge two lemmas from \cite{tolsa2008}).
\begin{lemma}[{\cite{tolsa2008}, Lemma 5.2 and Lemma 5.4}] \label{lemma:tolsa2008}.
Let $\mu$ be a $n$-uniformly rectifiable measure on $\R^d$. For every $R \in \mucubes$, we have
\begin{align*}
    \sum_{\substack{Q \in \mucubes\\ Q \subset R}} \int_Q \left(\frac{\dist(x, L_Q)}{\ell(Q)}\right)^2 \leq C(n,C_0) \ell(R)^n.
\end{align*}
\end{lemma}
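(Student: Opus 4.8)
The plan is to deduce this from Tolsa's $\alpha$-number characterisation of uniform rectifiability (Theorem \ref{theorem:tolsaalpha}) together with the classical David--Semmes $L^2$ $\beta$-number Carleson estimate, which for an $n$-ADR UR measure $\mu$ says that
\begin{align*}
\sum_{\substack{Q\in\mucubes\\ Q\subset R}}\beta_{\mu,2}(B_Q)^2\,\mu(Q)\ \lesssim\ \ell(R)^n,
\qquad
\beta_{\mu,2}(B_Q)^2:=\inf_{L}\frac{1}{\ell(Q)^n}\int_{B_Q}\Big(\frac{\dist(y,L)}{\ell(Q)}\Big)^2 d\mu(y),
\end{align*}
where the infimum runs over $n$-planes $L$ (this is part of the David--Semmes theory; it can also be recovered from Theorem \ref{theorem:tolsaalpha} and the standard comparison between $\alpha$ and $\beta_2$). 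The point requiring work is that the plane $L_Q$ in the statement is the one (almost) realising the infimum defining $\alpha_\mu(Q)$, not the one minimising $\beta_{\mu,2}(B_Q)$, so neither packing estimate applies verbatim; the core of the proof is a quantitative comparison of these two planes.

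First I would fix, for each $Q\subset R$, a plane $\widetilde L_Q$ almost minimising $\beta_{\mu,2}(B_Q)$, and the plane $L_Q$ together with a constant $c_Q\geq0$ with $\dist_{B_Q}(\mu,c_Q\hn|_{L_Q})\le 2\,\alpha_\mu(Q)\,\ell(Q)^{n+1}$. For $x\in Q\subset B_Q$ one has the elementary bound $\dist(x,L_Q)\lesssim\dist(x,\widetilde L_Q)+D_Q$, where $D_Q$ is the Hausdorff distance between $L_Q\cap B_Q$ and $\widetilde L_Q\cap B_Q$ (when the nearest point on $\widetilde L_Q$ escapes $B_Q$, the inequality is trivial since then both distances are $\sim\ell(Q)$). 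Squaring, integrating over $Q$ against $\mu$, and dividing by $\ell(Q)^{n+2}$, this yields
\begin{align*}
\frac{1}{\ell(Q)^{n}}\int_Q\Big(\frac{\dist(x,L_Q)}{\ell(Q)}\Big)^2 d\mu(x)\ \lesssim\ \beta_{\mu,2}(B_Q)^2+\Big(\frac{D_Q}{\ell(Q)}\Big)^2,
\end{align*}
so the whole matter reduces to proving $D_Q/\ell(Q)\lesssim\alpha_\mu(Q)+\beta_{\mu,2}(B_Q)$.

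To establish this I would use the $n$-ADR of $\mu$. By \eqref{eq:02_50} and Chebyshev the set $\{x\in B_Q\cap\spt\mu:\dist(x,L_Q)>\lambda\,\ell(Q)\alpha_\mu(Q)\}$, and likewise $\{x\in B_Q\cap\spt\mu:\dist(x,\widetilde L_Q)>\lambda\,\ell(Q)\beta_{\mu,2}(B_Q)\}$, have $\mu$-measure at most $\lesssim\lambda^{-1}\ell(Q)^n$, hence (by ADR) they can cover at most a small fraction of $B_Q\cap\spt\mu$ once $\lambda$ is a large absolute constant. Therefore one can select $n+1$ points $x_0,\dots,x_n\in B_Q\cap\spt\mu$ that avoid both exceptional sets and are quantitatively affinely independent at scale $\ell(Q)$ (each $x_i$ at distance $\gtrsim\ell(Q)$ from the affine span of $x_0,\dots,x_{i-1}$, which is possible by ADR). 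Each $x_i$ then lies within $\lesssim\ell(Q)\big(\alpha_\mu(Q)+\beta_{\mu,2}(B_Q)\big)$ of both $L_Q$ and $\widetilde L_Q$, and elementary Euclidean geometry — two $n$-planes that pass near $n+1$ points that are well separated at scale $\ell(Q)$ must be close throughout $B_Q$ — gives the desired bound on $D_Q$.

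Finally I would sum: using $\mu(Q)\sim\ell(Q)^n$ (property (4) of Theorem \ref{theorem:ADcubes}), the previous two estimates give
\begin{align*}
\sum_{\substack{Q\in\mucubes\\ Q\subset R}}\int_Q\Big(\frac{\dist(x,L_Q)}{\ell(Q)}\Big)^2 d\mu(x)\ \lesssim\ \sum_{\substack{Q\subset R}}\Big(\beta_{\mu,2}(B_Q)^2+\alpha_\mu(Q)^2\Big)\mu(Q)\ \lesssim\ \ell(R)^n,
\end{align*}
by the $\beta_2$ Carleson packing estimate and Theorem \ref{theorem:tolsaalpha}. \textbf{The main obstacle} is precisely the plane-comparison step: one has to upgrade the $L^1$/flat-distance control furnished by $\alpha_\mu(Q)$ to pointwise control on a set of nearly full $\mu$-measure, and — crucially — obtain a bound that is \emph{quadratic} in $\alpha_\mu(Q)$ (and in $\beta_{\mu,2}(B_Q)$), since a merely linear estimate $\ell(Q)^{-n}\int_Q(\dist(x,L_Q)/\ell(Q))^2\,d\mu\lesssim\alpha_\mu(Q)$ is not summable against $\sum_{Q\subset R}\mu(Q)$. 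Arranging for the well-separated points to simultaneously dodge both exceptional sets while remaining quantitatively independent is where the $n$-ADR hypothesis is essential. (An alternative to invoking the David--Semmes $\beta_2$ estimate would be a self-improvement argument using the $\alpha$-numbers at finer scales, but the comparison with $\widetilde L_Q$ seems the cleanest route.)
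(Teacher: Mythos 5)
Your argument is correct, but it is a genuinely different route from the one the paper points to. The paper does not prove this lemma at all; it cites it as Lemmas 5.2 and 5.4 of Tolsa \cite{tolsa2008}. Tolsa's mechanism stays entirely inside the $\alpha$-number framework: his Lemma 5.2 compares the $\alpha$-minimising planes across nested scales by a telescoping bound $\dist_H(L_P\cap B_P,L_Q\cap B_P)\lesssim\sum_{P\subset S\subset Q}\alpha_\mu(S)\ell(S)$ (this is exactly what the present paper quotes in \eqref{eq:ns_ur_cB1a}); for $x\in\spt(\mu)\cap Q$ one then runs a chain of cubes $Q_k\ni x$ shrinking to $x$, getting $\dist(x,L_Q)\lesssim\sum_k\alpha_\mu(aQ_k)\ell(Q_k)$, and the $L^2$ packing follows from Cauchy--Schwarz against the $\alpha$-Carleson estimate of Theorem \ref{theorem:tolsaalpha} (his Lemma 5.4). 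You instead stay at the single scale $\ell(Q)$, compare $L_Q$ to the $\beta_{\mu,2}$-minimiser $\widetilde L_Q$ via the well-separated-points argument, and then import the David--Semmes $\beta_2$ Carleson estimate as a black box. Both routes close; Tolsa's is self-contained once the $\alpha$-Carleson packing is in hand (and yields the scale-comparison lemma for free, which the paper also needs), whereas yours is shorter here but leans on a second substantial theorem. Two small points to tighten in your write-up: \eqref{eq:02_50} only controls $\int\dist(\cdot,L_Q)\,d\mu$ over $B(z_Q,2\ell(Q))$, so the $n+1$ well-separated points should be chosen inside $B(z_Q,c\ell(Q))$ with $c$ small enough that this ball (and $B_Q$) both contain them; and the reduction to $\alpha_\mu(Q)$ small should be made explicit, since for $\alpha_\mu(Q)\gtrsim 1$ the exceptional-set/plane-comparison step degenerates and one should instead use the trivial bound $\dist(x,L_Q)\lesssim\ell(Q)$ together with $\sum_{\alpha_\mu(Q)\gtrsim 1}\mu(Q)\lesssim\sum_Q\alpha_\mu(Q)^2\mu(Q)\lesssim\ell(R)^n$.
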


\subsection{Wavelets} \label{subsection:wavelets}
We consider a family of tensor products of Debauchies-type compactly supported wavelets with three vanishing moments (in particular, they have zero mean). They have the following properties. 
\begin{enumerate}
\item Each element of the family belongs to $C^1(\R^n)$ and it is supported on $5I$, where $I \in \ncubes$. Recall that by $\ncubes$ we denote the standard dyadic grid in $\R^n$. Hence we index the family as $\{\wavi\}_{I \in \ncubes}$.
\item For each $I \in \ncubes$, 
\begin{align} \label{eq:02_waviL2}
    \|\wavi\|_{L^2(\R^n)} = 1.
\end{align}
\item For each $I \in \ncubes$ we have
\begin{align}
    & \|\wavi\|_\infty \leq C \, \frac{1}{\ell(I)^{\frac{n}{2}}}; \label{eq:02_waviInf} \\
    & \|\nabla \wavi\|_\infty \leq C\, \frac{1}{\ell(I)^{1+ \frac{n}{2}}}. \label{eq:02_waviDifInf}.
\end{align}
\end{enumerate}
Then any function $f \in L^2(\R^n)$ can be expressed as
\begin{align*}
    f = \sum_{I \in \ncubes} \langle f, \wavi \rangle_{L^2} \, \wavi.
\end{align*}

\subsection{Preliminaries on $C_\mu$ and $\cmufi$}

We introduce a `smooth' version of the $\cmu$ numbers. The reason for doing so is that such a quantity is, in general, easier to work with. Moreover, the smooth version is, in some sense, smaller than the original version (this is because smooth cut offs can be bounded above by convex combinations of cut offs). 

For each $N \in \N$, let $\phi_N: \R^d \to \R$ be given by 
\begin{align}
    \phi_N(x) = e^{-\left| x\right|^{2N}}. \label{eq:02_30}
\end{align}
We will omit the subscript $N$ as it is unimportant for the discussion to follow.
For $t>0$, set 
\begin{align*}
\phi_t (x):= \frac{1}{t^n}\phi\left( \frac{x}{t} \right).
\end{align*}
\begin{definition}[Smooth $C$ number] \label{def:smoothed_c}
Let $\mu$ be an AD-regular measure on $\R^d$. Set 
\begin{align*}
\psit(x) := \frac{x}{t} \phi_t(x).
\end{align*}
Then we define the smooth version of $C_\mu$ as 
\begin{align*}
 \cmufi(x, t) & := \psit * \mu(x) \\
 & = \int \psit (x-y)\, d\mu(y)
\end{align*}
\end{definition}

The following it's all well known.
\begin{lemma} \label{lemma:gaussian}
Let $\mu$ be an $n$-AD-regular measure with constant $c_0$ on $\R^d$. Take $\phi_t$ as given above. 
Then 
\begin{align}
    t^n\,|\cmufi(x, t)| \leq C(n, c_0).
\end{align}
\end{lemma}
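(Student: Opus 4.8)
The estimate is a routine one: it rests only on the upper $n$-regularity of $\mu$ together with the fact that the profile $\phi$ (that is, $\phi_N$ from \eqref{eq:02_30}) decays faster than any polynomial. The plan is to expand the convolution and run a dyadic annular decomposition around $x$; no finer property of $\phi$ is needed.

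First I would unwind the definitions. Since $\phi_t(z)=t^{-n}\phi(z/t)$ and $\psit(z)=\frac{z}{t}\phi_t(z)=t^{-(n+1)}z\,\phi(z/t)$, the triangle inequality gives
\begin{align*}
|\cmufi(x,t)|=\av{\int\psit(x-y)\,d\mu(y)}\le\frac{1}{t^{n+1}}\int|x-y|\,\phi\!\ps{\frac{x-y}{t}}\,d\mu(y),
\end{align*}
and this last integral is finite (indeed absolutely convergent) precisely because the integrand decays super-exponentially in $|x-y|/t$ while $\mu$ has only polynomial growth. So it suffices to bound the right-hand side by a constant $C(n,c_0)$.

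Next I would decompose $\R^d$ into the ball $A_0:=B(x,t)$ and the annuli $A_k:=B(x,2^kt)\setminus B(x,2^{k-1}t)$ for $k\ge1$. On $A_k$ we have $|x-y|\le 2^kt$, hence $\phi\!\ps{\frac{x-y}{t}}=e^{-(|x-y|/t)^{2N}}\le e^{-2^{2N(k-1)}}$ when $k\ge1$ (and simply $\phi\le1$ on $A_0$); by $n$-regularity, $\mu(A_k)\le\mu(B(x,2^kt))\lesssim (2^kt)^n$ with implicit constant depending only on $c_0$ (this holds for every $x\in\R^d$, not just $x\in\spt(\mu)$, after enlarging the ball to one centered on $\spt(\mu)$). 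Substituting, the $A_0$-contribution to the integral above is $\lesssim t^{-(n+1)}\cdot t\cdot t^n\lesssim 1$, while for $k\ge1$ the $A_k$-contribution is
\begin{align*}
\frac{1}{t^{n+1}}\cdot(2^kt)\cdot e^{-2^{2N(k-1)}}\cdot(2^kt)^n\;\lesssim\;2^{k(n+1)}\,e^{-2^{2N(k-1)}}.
\end{align*}

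Finally I would sum over $k\ge0$: the super-exponential factor $e^{-2^{2N(k-1)}}$ dominates the polynomial growth $2^{k(n+1)}$, so the series converges to a constant depending only on $n$ and $c_0$, which yields the pointwise bound asserted in the lemma. The main point is thus that there is no real obstacle: one only checks the convergence of this (geometric-against-super-exponential) series, and none of the finer features of the profile — its $C^1$ regularity or its vanishing moments — play any role here; those are used only in the genuinely delicate estimates later in the paper.
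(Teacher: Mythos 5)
Your argument is correct and self-contained: the dyadic annular decomposition around $x$, the super-exponential decay of $\phi$, and the upper $n$-regularity bound $\mu(B(x,r))\lesssim_{c_0} r^n$ (valid for every $x\in\R^d$, as you note) together give a convergent geometric-against-super-exponential series. The paper instead runs a layer-cake (co-area) decomposition of $\int \phi_t(x-y)\,d\mu(y)$, obtaining the bound $\int \phi_t(x-y)\,d\mu(y)\le c_0\,\Gamma(\cdot)$ via the explicit formula for the sublevel sets of $\phi$, and then outsources the passage from the profile $\phi_t$ to the odd kernel $\psit=\frac{\cdot}{t}\phi_t$ to a standard estimate cited from De Lellis. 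The two routes are of comparable difficulty; yours avoids the external citation by absorbing the extra factor $|x-y|/t$ directly into the annular estimate, while the paper's layer-cake computation is marginally cleaner if one cares about the explicit $\Gamma$-constant. Both exploit exactly the same structural facts, so this is a presentational difference rather than a conceptual one.

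One small caveat worth flagging. As stated, the lemma asserts $t^n|\cmufi(x,t)|\le C(n,c_0)$, but your argument (correctly) establishes $|\cmufi(x,t)|\le C(n,c_0)$. These are not the same inequality. Inspecting the paper's own displayed estimate, $t^n\int\phi_t(x-y)\,d\mu(y)\le c_0\,\Gamma(\tfrac{n+2}{2})\,t^n$, one sees that the factor $t^n$ appears on both sides and cancels, so the paper is in fact proving $\int\phi_t(x-y)\,d\mu(y)\le C$, i.e.\ the $t$-independent bound. The spurious $t^n$ in the lemma's display is a typo; the statement you prove, $|\cmufi(x,t)|\le C(n,c_0)$, is the one that is both true and actually used downstream. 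A final minor quibble: $\phi$ has no vanishing moments (it is a positive bump), so the closing remark should attribute the unused cancellation to the oddness of $\psit$ rather than to moment conditions on $\phi$; this does not affect the correctness of the estimate.
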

\begin{proof}
It is easy to see that
\begin{align*}
    t^n\, \int \phi_t(x-y) \, d\mu(y) \leq c_0 \Gamma\left(\frac{n+2}{2}\right) \, t^n.
\end{align*}
Let us prove this for the sake of completeness. We use the so called layer-cake decomposition:
\begin{align*}
    \int t^n\, \phi_t(x-y) \, d\mu(y) & = \int_0^1 \int_{\R^d} \chara_{\{x \in \R^d| \phi_t(x-y)>s\}}(y) \, d\mu(y) \, ds.
\end{align*}
Notice that
\begin{align*}
    \phi_t(x-y)>t \, \iff |x-y| < (-\ln(s))^{1/2} t.
\end{align*}
Thus 
\begin{align*}
\int_{\R^d} \chara_{\{x \in \R^d| \phi_t(x-y)>s\}}(y) \, d\mu(y) & = \int_{\R^d} \chara_{B(x, (-\ln(s))^{1/2} t)}(y) \, d\mu(y) \\
& \leq c_0 (-\ln(s))^{1/2} t)^n.
\end{align*}
Now, 
\begin{align*}
    \int_0^1 (-\ln(s))^{\frac{n}{s}} \, ds = \Gamma\left(\frac{n+2}{2}\right).
\end{align*}

If we now argue as in \cite{DeLellis}, we obtain the result. 
\end{proof}

\begin{remark} \label{rem:C-Omega-phi}
Clearly, the very same holds for the quantity $C_{\Omega, \mu,\phi}^n$. 
\end{remark}
 \section{Uniform rectifiability implies a Carleson condition}

This section will be devoted to prove the following proposition. 

\begin{proposition} \label{proposition:ns_ur_c}
    Let $\mu$ be an AD $n$-regular measure on $\R^d$ with constant $c_0$. If $\mu$ is uniformly $n$-rectifiable, then for each $R \in \mucubes$, we have that
    \begin{align} \label{eq:ns_ur_c1}
        \sum_{\substack{Q \in \mucubes\\Q \subset R}} \int_Q \int_{\ell(Q)}^{2\ell(Q)} |C_\mu(x,t)|^2 \, \dt d\mu(x) \lesssim_{c_0} \mu(R).
    \end{align}
\end{proposition}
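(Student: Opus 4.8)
The plan is to bound the square function $\sum_{Q\subset R}\int_Q\int_{\ell(Q)}^{2\ell(Q)}|C_\mu(x,t)|^2\,\dt\,d\mu(x)$ by comparing $C_\mu$ against the contribution coming from a best-approximating $n$-plane, exploiting the fact that the kernel $\frac{x-y}{t}$ is \emph{odd}. First I would replace the sharp cutoff in $C_\mu$ by the smooth version $\cmufi$ from Definition \ref{def:smoothed_c}: since for $t\sim\ell(Q)$ and $x\in Q$ the sharp cutoff $\frac1{t^n}\chara_{B(x,t)}$ can be written as (or dominated by) a convex combination of the Gaussians $\phi_t$ at comparable scales, one gets $|C_\mu(x,t)|\lesssim \sup_{t'\sim t}|\cmufi(x,t')|$, up to the usual routine error terms, so it suffices to prove the Carleson bound for $\cmufi$.

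The heart of the matter is the following observation: if $L=L_Q$ is an $n$-plane and $c\ge0$ a constant, and $x\in L$, then $\psit*(c\hn|_L)(x)=c\int_L\frac{x-y}{t}\phi_t(x-y)\,d\hn(y)=0$ by oddness of $z\mapsto z\phi(z)$ (the integrand is odd about $x$ on $L$). More generally, if $x$ is merely \emph{close} to $L$, say $\dist(x,L)\le \eta\,t$, then $|\psit*(c\hn|_L)(x)|\lesssim \eta$ by a first-order Taylor expansion of $\psit$ together with the ADR bound $c\hn(B(x,t)\cap L)\lesssim t^n$. Hence, writing $\sigma=c_Q\hn|_{L_Q}$ for the minimiser in the definition of $\alpha_\mu(Q)$,
\begin{align*}
    |\cmufi(x,t)| &= |\psit*\mu(x)| \le |\psit*(\mu-\sigma)(x)| + |\psit*\sigma(x)|\\
    &\lesssim \frac{1}{t^{n+1}}\dist_{B(x,t)}(\mu,\sigma) + \frac{\dist(x,L_Q)}{t} \lesssim \alpha_\mu(Q) + \frac{\dist(x,L_Q)}{\ell(Q)},
\end{align*}
where the first term uses that $\psit$ is (after rescaling by $t^{n+1}$) a Lipschitz function supported essentially in $B(x,t)\subset B_Q$ — more precisely, since $\phi$ is Schwartz one truncates at scale $t\log(1/\cdot)$ and absorbs the Gaussian tails, another routine step. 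Squaring, summing, and using first Theorem \ref{theorem:tolsaalpha} ($\sum_{Q\subset R}\alpha_\mu(Q)^2\mu(Q)\lesssim\mu(R)$) and then Lemma \ref{lemma:tolsa2008} ($\sum_{Q\subset R}\int_Q(\dist(x,L_Q)/\ell(Q))^2\,d\mu\lesssim\ell(R)^n\sim\mu(R)$) yields \eqref{eq:ns_ur_c1}, since the $t$-integral over $[\ell(Q),2\ell(Q)]$ of a quantity of size $\lesssim \ell(Q)^{-1}$ contributes only an $O(1)$ factor (the $\dt$ measure of that interval is $\log 2$).

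The main obstacle I expect is making the comparison $|C_\mu(x,t)|\lesssim$ (smooth version) genuinely rigorous and controlling all the tail/error terms: the Gaussian $\phi_N$ is not compactly supported, so $\psit*\sigma(x)$ and $\psit*(\mu-\sigma)(x)$ pick up contributions from $|x-y|\gg t$, and one must check these are summable in a Carleson sense. The standard fix is to split $\psit=\sum_k \psi_t\chara_{2^k t\le|x-y|<2^{k+1}t}$ into annuli, use the super-polynomial decay of $\phi_N$ (choosing $N$ large) to get a factor like $2^{-kN}$, and run the $\alpha$-number / $\dist(x,L_Q)$ estimate on each annulus against the cube $Q_k\supset Q$ with $\ell(Q_k)\sim 2^k\ell(Q)$; the geometric decay then beats the growth of $\alpha_\mu(Q_k)$ and of the number of such cubes. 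A second, minor technical point is that $L_Q$ is the minimiser for $\alpha_\mu(Q)$ rather than for $\beta$, so one uses \eqref{eq:02_50} to pass between them where needed. None of these is conceptually hard — the argument follows the template of \cite{tolsa2014} — but bookkeeping the annular decomposition cleanly is where the real work lies.
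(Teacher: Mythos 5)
Your key observation — that $\psit*(c\hn|_L)(x)=0$ when $x\in L$ by oddness, and more generally is $O(\dist(x,L)/t)$ — is correct and is indeed one ingredient in the paper's proof (it is what makes the term $I$ and the term $II_3$ tractable). But the reduction to the smooth version $\cmufi$ at the very start has the inequality \emph{backwards}, and this is a genuine gap, not a routine error term.

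The paper's own remark (and the identity behind it) goes the other way: a radially decreasing profile like $\phi_N$ satisfies $\phi_N(z)=\int_0^\infty g(r)\,\chara_{B(0,r)}(z)\,dr$ for some $g\ge0$, so the \emph{smooth} quantity is an average of \emph{sharp} ones,
\begin{align*}
\cmufi(x,t)=\int_0^\infty g(r)\,r^{n+1}\,C_\mu(x,rt)\,dr ,
\end{align*}
whence a Carleson bound for $C_\mu$ passes to $\cmufi$ (this is used in Section 5 for the converse implication). There is no corresponding way to dominate $\chara_{B(0,1)}$ by a convex combination of smooth bumps — any such combination is smooth, hence cannot majorize or reproduce the jump — and a pointwise bound $|C_\mu(x,t)|\lesssim\sup_{t'\sim t}|\cmufi(x,t')|$ fails: the ADR condition allows $\mu$ to put mass $\sim t^n$ in an annulus of arbitrarily small width near $\partial B(x,t)$, so $C_\mu(x,t)-\cmufi(x,t)$ can be of order one no matter how large $N$ is. Once you try to run your $\alpha$-number estimate with the sharp kernel instead, it breaks precisely because $z\mapsto\frac{z}{t}\chara_{B(0,t)}(z)$ is not Lipschitz, so $\dist_{B_Q}(\mu,c_Q\hn|_{L_Q})$ does not control $\int\frac{x-y}{t}\chara_{B(x,t)}\,d(\mu-c_Q\hn|_{L_Q})$.

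This is exactly the difficulty the paper's proof is built around, and is why it is so much more involved than your outline. It works with the sharp cutoff directly. The circular projection $\pix$ is introduced because it preserves $|x-y|$, hence commutes with the indicator $\chara_{B(x,t)}$ (equation \eqref{eq:06_30}); this lets one transport the problem onto the approximating plane without disturbing the ball. The discontinuous function $g_i^0(y)=y\cdot e_i\,\chara_{\B^n}(y)$ that results is then expanded in a compactly supported wavelet basis, and the hard part of the estimate — the stopping-time construction and Lemmas \ref{lemma:ns_ur_cSNG1}--\ref{lemma:ns_ur_cSNG2} — is devoted to the small wavelets whose supports meet $\partial\B$, i.e., precisely to the boundary layer that your sharp-to-smooth reduction was hoping to sweep away. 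Your plan would give a clean and short proof of the Carleson estimate for $\cmufi$, but that is a strictly weaker statement than Proposition \ref{proposition:ns_ur_c}.
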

The same techniques apply to prove one direction of Theorem \ref{t:UR-Omega}.
\begin{proposition}\label{p:UR-C-Omega}
    Let $\mu$ be an AD $1$-regular measure on $\C$ with constant $c_0$. If $\mu$ is uniformly $1$-rectifiable, then for each $R \in \mucubes$, we have that
    \begin{align} 
        \sum_{\substack{Q \in \mucubes\\Q \subset R}} \int_Q \int_{\ell(Q)}^{2\ell(Q)} |C^1_{\Omega,\mu}(x,t)|^2 \, \dt d\mu(x) \lesssim_{c_0} \mu(R).
    \end{align}
\end{proposition}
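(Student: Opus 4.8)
The plan is to bound each summand $\int_Q\int_{\ell(Q)}^{2\ell(Q)}|C_\mu(x,t)|^2\,\dt\,d\mu$ by quantities already known to be Carleson: the squared Tolsa coefficient $\alpha_\mu(Q)^2\mu(Q)$ (summable by Theorem \ref{theorem:tolsaalpha}) and $\int_Q(\dist(x,L_Q)/\ell(Q))^2\,d\mu$ (summable by Lemma \ref{lemma:tolsa2008}). This follows the scheme of \cite{tolsa2014}. The first move is to pass to the smooth variant: replacing the sharp truncation $\chara_{B(x,t)}(y)$ by a fixed smooth radial profile $\psi(\tfrac{x-y}{t})$ with $\chara_{B(0,1)}\le\psi\le\chara_{B(0,2)}$ (as in the discussion preceding Definition \ref{def:smoothed_c}) changes $\int_{\ell(Q)}^{2\ell(Q)}|C_\mu(x,t)|^2\dt$ only by a term which, crucially, still has the ``radial weight times the vector kernel $z\mapsto z$'' structure exploited below. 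So it suffices to bound $\int_Q\int_{\ell(Q)}^{2\ell(Q)}|\Psi_t*\mu(x)|^2\dt\,d\mu$ with $\Psi_t(z)=t^{-n}\tfrac zt\psi(z/t)$.

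Fix $Q\subset R$, and let $\widehat Q\supset Q$ be the ancestor with $\ell(\widehat Q)=2^K\ell(Q)$, $K=\lceil\log_2 C_0\rceil$, so that $B(x,t)\subset B_{\widehat Q}$ for all $x\in Q$, $\ell(Q)\le t\le 2\ell(Q)$. Let $\nu_{\widehat Q}=c_{\widehat Q}\,\hn|_{L_{\widehat Q}}$ be a near-minimiser of $\dist_{B_{\widehat Q}}(\mu,\cdot)$ among flat measures, so that $c_{\widehat Q}\lesssim 1$ by AD-regularity and $\dist_{B_{\widehat Q}}(\mu,\nu_{\widehat Q})\lesssim\ell(\widehat Q)^{n+1}\alpha_\mu(\widehat Q)$. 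Since $y\mapsto\Psi_t(x-y)$ is supported in $B_{\widehat Q}$ with $\Lip$-norm $\lesssim t^{-n-1}$,
\[
|\Psi_t*\mu(x)-\Psi_t*\nu_{\widehat Q}(x)|\;\lesssim\; t^{-n-1}\dist_{B_{\widehat Q}}(\mu,\nu_{\widehat Q})\;\lesssim_{C_0}\;\alpha_\mu(\widehat Q).
\]
For the flat term, write $x=p+v$ with $p\in L_{\widehat Q}$ the foot of the perpendicular and $v\perp L_{\widehat Q}$. The scalar weight $\psi(|x-y|/t)$ depends only on $|p-y|$, so under the reflection $y\mapsto 2p-y$ of $L_{\widehat Q}$ it is invariant while the component of $x-y$ along $L_{\widehat Q}$ changes sign; hence that component of $\Psi_t*\nu_{\widehat Q}(x)$ vanishes and only the $v$-component survives, giving $|\Psi_t*\nu_{\widehat Q}(x)|\lesssim c_{\widehat Q}\,|v|/t\lesssim\dist(x,L_{\widehat Q})/\ell(Q)$. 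Therefore $|C_\mu(x,t)|^2\lesssim\alpha_\mu(\widehat Q)^2+(\dist(x,L_{\widehat Q})/\ell(Q))^2$ (up to the extra term from the first step). Integrating over $t\in[\ell(Q),2\ell(Q)]$ and $x\in Q$ and summing over $Q\subset R$: the $\alpha$-contribution is $\lesssim\sum_{P}\alpha_\mu(P)^2\mu(P)\lesssim\mu(R)$ by Theorem \ref{theorem:tolsaalpha} (each $\widehat Q$ has boundedly many descendants $Q$), and the distance contribution is $\lesssim\sum_{P\subset R'}\int_P(\dist(x,L_P)/\ell(P))^2\,d\mu\lesssim\ell(R')^n\sim\mu(R)$ by Lemma \ref{lemma:tolsa2008}, $R'$ a bounded enlargement of $R$.

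The main obstacle is the first step. Estimating the sharp-versus-smooth difference $\int[\psi(\tfrac{x-y}{t})-\chara_{B(x,t)}(y)]\tfrac{x-y}{t}\,d\mu$ crudely by the $\mu$-mass of the shell $B(x,2t)\setminus B(x,t)$ produces a per-cube error of order $1$, whose sum over $Q\subset R$ diverges — indeed for a flat measure this crude bound is $\asymp 1$, whereas the true quantity is $\lesssim\dist(x,L_{\widehat Q})/\ell(Q)$. One must instead treat this difference exactly as the main term: it is again a radial weight times the odd kernel $z\mapsto z$, so comparing it directly to $\nu_{\widehat Q}$ (with a regularised cutoff of transition width $\sim\ell(Q)$, controlling the resulting error by $\alpha_\mu(\widehat Q)$) keeps the surviving contribution of the summable size $\dist(x,L_{\widehat Q})/\ell(Q)+\alpha_\mu(\widehat Q)$. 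A second, routine point is the bookkeeping between $Q$, $\widehat Q$ and between the various near-optimal planes $L_Q$, $L_{\widehat Q}$ (differences cost only further $\alpha$-type terms, by \eqref{eq:02_50}). Finally, Proposition \ref{p:UR-C-Omega} is proved by the identical scheme with $n=1$, $d=2$, and $z\mapsto z$ replaced by $z\mapsto|z|\,\Omega(z/|z|)$: oddness of $\Omega$ still forces the $L_{\widehat Q}$-parallel part of the flat integral to vanish when $x\in L_{\widehat Q}$, and smoothness of $\Omega$ gives Lipschitz dependence on $v$, so $|C^1_{\Omega,\nu_{\widehat Q}}|\lesssim\dist(x,L_{\widehat Q})/\ell(Q)$; AD-regularity and Theorem \ref{theorem:tolsaalpha} conclude as before. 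The smallness of $\delta_\Omega$ plays no role in this direction; it is needed only for the converse, via \cite{villa19}.
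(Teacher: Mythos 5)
Your overall target is right — reduce to $\alpha_\mu$ and $\dist(\cdot,L_Q)/\ell(Q)$, then invoke Theorem~\ref{theorem:tolsaalpha} and Lemma~\ref{lemma:tolsa2008} — and your observation that oddness of the kernel kills the tangential part of the flat-measure integral is exactly the mechanism the paper exploits (it reappears there as $II_3 = 0$, since the wavelets have zero mean). But there is a genuine gap in the way you propose to handle the sharp truncation $\chara_{B(x,t)}$, and it is not a detail: it is the entire technical content of Proposition~\ref{proposition:ns_ur_c}.

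Two related problems. First, your opening reduction to a smooth profile $\psi$ with $\chara_{B(0,1)}\le\psi\le\chara_{B(0,2)}$ goes in the wrong direction: such a $\psi$ is a convex average of sharp cutoffs $\chara_{B(0,s)}$, $s\in[1,2]$, so the smooth $C$-number is dominated by the sharp one in the Carleson sense (this is the ``standard fact'' cited in Section~5, and is the direction the paper actually needs). There is no easy comparison letting you deduce the Carleson bound for $C_\mu$ from one for the smooth variant. Second, and more seriously, your proposed fix for the shell error does not close. To bound $\int[\chara_{B(x,t)}-\chi_\epsilon]\tfrac{x-y}{t}\,d(\mu-\nu_{\widehat Q})(y)$ via $\dist_{B_{\widehat Q}}(\mu,\nu_{\widehat Q})\lesssim\alpha_\mu(\widehat Q)\,\ell(\widehat Q)^{n+1}$ you need the integrand to be Lipschitz, and $\chara_{B(x,t)}$ is not. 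Estimating that piece against $\mu$ and $\nu_{\widehat Q}$ separately, the $\nu_{\widehat Q}$-shell has mass $\lesssim\epsilon\,t^{n-1}$ so that side is fine, but the $\mu$-shell mass is only bounded by $\mu(B(x,2t))\sim t^n$ under AD-regularity alone; a quantitative $o(t^n)$ bound on annulus mass for generic $(x,t)$ is precisely the kind of Carleson information you are trying to prove, so the argument is circular. Taking $\epsilon\sim t$ (your ``transition width $\sim\ell(Q)$'') does not help either: with that choice $\chi_\epsilon$ is not close to $\chara_{B(x,t)}$, so the sharp-vs-regularised difference is not small and still carries $O(1)$ per cube.

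This is exactly why the paper does something structurally different. It does not smooth the cutoff at all. Instead it uses the \emph{circular projection} $\pix$, whose defining property $|\pix(y)-x|=|x-y|$ gives $\chara_{B(x,t)}(y)=\chara_{B_n(x,t)}(\pix(y))$ (equation~\eqref{eq:06_30}): the sharp indicator is preserved under the projection, so the term $II(x,t)$ becomes an integral of the $n$-dimensional function $g_i^0(y)=y\cdot e_i\,\chara_{\B_n}(y)$ against the pushed-forward measure, and that function is then decomposed in a wavelet basis. The slow decay of the wavelet coefficients near $\partial\B_n$ (Lemma~\ref{lemma:wavcoeff}, item (2), giving $|a_I|\lesssim\ell(I)^{n/2}$ rather than something better) is the quantitative cost of the sharp cutoff, and the stopping-time argument over $\dB\dN\dG$ and $\dS\dN\dG$ (with $\tree$ and $\Stop$) is what makes that cost summable to a Carleson bound. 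Your proposal replaces this whole mechanism with a one-line comparison that does not survive scrutiny. Finally, for Proposition~\ref{p:UR-C-Omega} itself, note that the reason the paper restricts the general-$\Omega$ case to $n=1$, $d=2$ is not just the flat-measure oddness you use, but that $K(L_Q^x-x)$ must again be a line so the circular-projection/wavelet machinery applies; this constraint has no analogue in your sketch, which is a sign that a key part of the argument is missing from it.
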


Let $\delta > 0$ be a small constant. To prove \eqref{eq:ns_ur_c1}, we may sum only on those cubes for which
\begin{align} \label{eq:ns_ur_cdelta}
    \alpha_\mu(1000Q) \leq \delta^2
\end{align}
holds.  
Indeed, we see that
\begin{align*}
    &\sum_{\substack{Q \in \mucubes(R) \\ \alpha_\mu(1000Q) > \delta^2}} \int_Q \int_{\ell(Q)}^{2\ell(Q)} |C_\mu(x,t)|^2\, \dt, d\mu(x) \\
    & \lesssim_{c_0} \sum_{\substack{Q \in \mucubes(R) \\ \alpha_\mu(1000Q) > \delta^2}} \int_R \int_{\ell(Q)}^{2\ell(Q)} \left(\frac{\alpha_\mu(1000Q)}{\delta^2}\right)^2\, \dt d\mu(x) \\
    & \lesssim_{c_0}  \frac{1}{\delta^4}\sum_{\substack{Q \in \mucubes(R) \\ \alpha_\mu(1000Q) > \delta^2}}  \alpha_\mu(1000Q)^2\, \mu(Q) \\
    & \leq \frac{1}{\delta^4}\sum_{\substack{Q \in \mucubes(R)}}  \alpha_\mu(1000Q)^2\, \mu(Q) \\
    & \lesssim_{\delta} \mu(R).
\end{align*}
The last inequality follows from Tolsa's Theorem 1.2 in \cite{tolsa2008}.

\begin{remark}
A recurring issue when working with the $C_\mu$ numbers is that there is no quasi-monotonicity formula (as there is for the $\beta$ numbers) as we increase the radius of the ball over which we are integrating. In other words, we do not know how $C_\mu(x, t)$ compares with $C_\mu(x, 2t)$, for example. For this reason, when wanting to compare $\mu$ with, say, the Hausdorff measure on the best approximating plane, we cannot simply consider the push forward measure through an orthogonal projection. The $C_\mu$ numbers are unstable under such space deformations. Because of this, we will use instead a \textit{circular} projection, which we learnt from \cite{mastolsa2} (see eq. 54) and from \cite{tolsa2014}.
\end{remark}

\begin{notation}
For $x=(x_1,...,x_n,x_{n+1},...,x_d) \in \R^d$, set 
\begin{align*}
    x^H := (x_1,...,x_n) \in \R^n.
\end{align*}
We define the \textit{circular projection} $\pis: \R^d \to \R^n$ by setting
\begin{align*}
    \pis(x) := \frac{|x|}{x^H} x^{H}.
\end{align*}
If we view $\R^n=V$ as a subspace of $\R^d$, we may write
\begin{align*}
\pis(x) = \frac{|x|}{|\Pi_V(x)|} \Pi_V(x), 
\end{align*}
where here $\Pi_V$ is the standard orthogonal projection onto $V$.
\end{notation}

For us, the fundamental property of $\pis$ (and the reason to use it, see the remark above) is that
\begin{align*}
|\pis(x)|= |x|.
\end{align*}
This implies, in particular, that 
\begin{align} \label{eq:06_30}
    \chara_{\B_d} = \left( \chara_{\B_n}\circ \pis\right).
\end{align}

Let $x \in \R^d$. Let $L^x$ be an affine $n$-plane such that $x \in L$. We define $\pil$ as follows. Let $R_{L^x}$ be the rotation so that $R_{L^x} (L^x)$ is parallel to $\R^n$. For $y \in \R^d$, we set
\begin{align*}
    \pil(y):= R_{L^x}^{-1} \left( \pis\left( R_{L^x} (x-y)\right) + R_{L^x}(x) \right).
\end{align*}
Again the fundamental property is satisfied.
\begin{align*}
    \left| \pil(y) - x\right|& = \left| R^{-1}_{L^x} \left( \pis\left( R_{L^x}(x-y)\right)\right) \right| 
    = |\pis(R_{L^x}(x-y))|\\ 
    & =|R_{L^x}(x-y)|=|x-y|.
\end{align*}

\bigskip

Fix a cube $Q \in \mucubes(R)$ with $\alpha_\mu(1000Q) \leq \delta^2$ and $(x, t) \in Q \times [\ell(Q), 2\ell(Q)]$. 
Let $L_Q$ be the minimising $n$-plane for $\alpha_\mu(Q)$ and $c_Q$ the minimising constant. Denote by $L_Q^x$ the $n$-plane parallel to $L_Q$ so that
\begin{align*}
 x \in L_Q^x.
\end{align*}

Let $\Omega: \bS^{d-1} \to \bS^{d-1}$ be an odd, $\mathcal{C}^2$ map which is also bi-Lipschitz. For $x \in \R^d$, set 
\begin{align*}
    K_t(x) := \frac{|x| \Omega\ps{\frac{x}{|x|}}}{t}.
\end{align*}
Note that 
\begin{align} \label{e:grad-est}
|\grad K_t(x)| \leq \frac{C(\Omega)}{t}.    
\end{align}

\begin{remark}
We carry out some estimates at this level of generality\footnote{Most of them actually hold more generally, for example it would suffice to assume that $K$ is an odd kernel with $\|\grad K\|_\infty \leq C$.}. Soon, however, we will have to restrict to the case where $\Omega= Id$ or $d=2$. We will highlight when and why this happen below.
\end{remark}
 
We write
\begin{align}\label{e:I+II}
    t^{-n} \av{\int_{B(x,t)} K_t(x-y) \, d\mu(y)} & \leq t^{-n} \av{ \int_{B(x,t)} K_t(x-y) - K_t(x- \pix(y)) \, d\mu(y) }\nonumber\\ 
    &\enskip \enskip + t^{-n}  \av{ \int_{B(x,t)} K_t(x-\pix(y)) \, d\mu(y) }\nonumber\\
    & =: I(x,t)+ II(x,t).
\end{align}

\subsection{Estimates for $I(x,t)$}
Without loss of generality we may take $x=0$ and assume that $\qpo$ is parallel to $\R^n$.
We further split our integral, so to compare the support of $\mu$ with the plane containing $0$. In this way, we may be able to use the $\beta$ numbers. 

\begin{align*}
    I(x,t) & \leq t^{-n} \av{ \int_{B(x,t)} K_t(x-y) - K_t(x-\poxo(y))\, d\mu(y) } 
     + t^{-n} \av{ \int_{B(x,t)} K(x- \pixo(y)) - K(x-\poxo(y))}  \\
    & := I_1(x,t) + I_2(x,t).
\end{align*}


\subsubsection{Estimates for $I_1$}
We make one more splitting: recall that $L_Q$ is the minimising plane of $\alpha(Q)$ and $L_Q^0$ is its translate containing $0$. Using \eqref{e:grad-est}, we write
\begin{align*}
    I_1(x,t) & \lesssim \frac{1}{t^n} \int_{B(0,t)} |y- \Pi_{L_Q}(y)|t^{-1} \, d\mu(y) 
     +\frac{1}{t^n} \int_{B(0,t)} |\Pi_{L_Q}(y) - \poxo(y)|t^{-1} \, d\mu(y) 
     =: I_{1,1} + I_{1,2}.
\end{align*}
Now, recalling the definitions in Subsection \ref{subsection:betaalpha}, 
\begin{align*}
    I_{1,1} = \beta_{\mu}^{1,n}(Q, L_Q) \lesssim \alpha_\mu(Q).
\end{align*}
On the other hand, we see that
\begin{align*}
    I_{1,2} \lesssim_{c_0} \frac{\dist(0, L_Q)}{\ell(Q)}.
\end{align*}

\subsubsection{Estimates for $I_2$}
If $y \in \R^d$, let $\theta_y$ be the angle between the plane $\R^n$ and the line segment $[0,y]$. Then, 
\begin{align} \label{eq:diffpixopoxo}
|\poxo(y) - \pixo(y)| & = |\pixo(y)|(1- \cos(\theta_y))
 = |\pixo(y)| \left( \sin(\theta_y/2)^2 \right)\nonumber \\
& \leq |\pixo(y)| \left( \sin(\theta_y/2) \right) 
= |\pixo(y)| \frac{\dist(y, L_Q^0)}{|\pixo(y)|} \nonumber \\
&= \dist(y, L_Q) + \dist(0, L_Q). 
\end{align}
Thus, as before, 
$
    I_2 \lesssim_{c_0} \alpha_\mu(Q) + \frac{\dist(0, L_Q)}{\ell(Q)}. 
$
All in all, we have 
$
    I \lesssim_{c_0} \alpha_\mu(Q) + \frac{\dist(0, L_Q)}{\ell(Q)}.
$
Hence for general $x \in Q$, we have that 
\begin{align*}
    I \lesssim_{c_0} \alpha_\mu (Q) + \frac{\dist(x, L_Q)}{\ell(Q)}.
\end{align*}
Recall Lemma \ref{lemma:tolsa2008} and Theorem \ref{theorem:tolsaalpha}. Then we see that
\begin{align*}
    & \left(\sum_{Q \in \mucubes(R)} \int_Q \int_{\ell(Q)}^{2\ell(Q)} I(x, t)^2\, d\mu(x) \dt\right)^{\frac{1}{2}} \\
    & \lesssim \left(\sum_{Q \in \mucubes(R)} \int_Q \int_{\ell(Q)}^{2\ell(Q)} \left(\alpha_\mu (Q) + \frac{\dist(x, L_Q)}{\ell(Q)}\right)^2 \, d\mu(x) \, \dt\right)^{\frac{1}{2}}\\
    & \lesssim \left(\sum_{Q \in \mucubes(R)} \alpha_\mu(Q)^2 \mu(Q)\right)^{\frac{1}{2}} \\
    & \enskip \enskip +\left( \sum_{Q \in \mucubes(R)} \int_Q \left(\frac{\dist(x, L_Q)}{\ell(Q)} \right)^2\, d\mu(x)\right)^{\frac{1}{2}} 
     \lesssim_{c_0} \mu(R)^{\frac{1}{2}}.
\end{align*}

\subsection{Estimates for $II(x,t)$}\label{s:est-II-Id}
For this estimate, we let $\Omega= Id$. We will comment later on the estimate for the case $\Omega \neq Id$ and $d=2, n=1$. 
Recall that (with $\Omega = Id$), 
\begin{align*}
II(x,t) = t^{-n} \left| \int_{B(x,t)} t^{-1} \left(\pix(y)-x \right) \, d \mu(y) \right|. 
\end{align*}
Due to the property \eqref{eq:06_30} of the circular projection, we notice that
\begin{align}\label{e:IIa}
    \int_{B(x,t)} \ps{\pix(y) - x} \, d\mu(y) & = \int_{\pix\left(B_d(x,t)\right)} \ps{y-x} \, d \pix[\mu] (y) 
     =\int_{B_n(x,t)} \ps{y -x} \, d \pix[\mu] (y).
\end{align}

For $y \in L_Q^x$, we may write 
\begin{align*}
    \int_{B_n(x,t)} \ps{y-x}\,  d \pix[\mu](y) = \sum_{i=1}^d \, \left(R_{L_Q^x}^{-1} e_i\right)  \, \int_{B_n(x,t)} \, (y-x) \cdot R_{L_Q^x}^{-1}(e_i)\, d\pix[\mu](y),
\end{align*}
where here $\{e_i\}_{i=1}^d$ is the standard orthonormal basis of $\R^d$. Notice that for $i=n+1, ..., d$, and for $y \in \qpx$, $(x-y) \cdot R_{L_Q^x}^{-1}(e_i) = 0$. 
For each $i \in \{1,...,d\}$ define
\begin{align*}
    g_i^{0}: \R^n \to \R
\end{align*}
by 
\begin{align*}
    g_i^{0} (y) := y \cdot e_i \chara_{\B_n}(y).
\end{align*}
Notice that 
\begin{align} \label{eq:supnormgio}
    \| \gio\|_\infty \leq 1.
\end{align}
We then see that
\begin{align*}
 & \int_{B_n(x,t)} \ps{\frac{y-x}{t}} \, d \pix[\mu] (y) \\
 & = \sum_{i=1}^n \left(R_{L_Q^x}^{-1} e_i\right)\int t^{-1} (x-y)\cdot R_{L_Q^x}^{-1}(e_i) \, d\pix[\mu](y) \\
 & = \sum_{i=1}^n \left(R_{L_Q^x}^{-1} e_i\right)\, t^{-1} \int_{B_n(x,t)} (R_{L_Q^x}(y-x)) \cdot e_i \, d\pix[\mu](y) \\
 & = \sum_{i=1}^n \left(R_{L_Q^x}^{-1} e_i\right) \, t^{-1} \int \gio(R_{L_Q^x}(y-x))  \, d\pix[\mu](y)
\end{align*}

Recall now Subsection \ref{subsection:wavelets}.
For each $i \in \{1,...,n\}$, we may decompose  $\gio$ through the wavelets basis $\{\wavi\}$. That is, we write $\gio$ as
\begin{align*}
    \gio(y)= \sum_{I \in \ncubes} a_I \wavi(y).
\end{align*}
Thus, 
\begin{align*}
    & \int \gio\left(R_{L_Q^x}\quotyx\right)  \, d\pix[\mu](y) \\
    &= \sum_{I \in \ncubes} a_I \int \wavi\left(\rot{x}{Q}\quotyx \right) \, d \pix[\mu](y).
\end{align*}
Moreover, 
\begin{align} \label{eq:circpush}
\int \wavi\ps{\rot{x}{Q}\quotyx}\, d \pix[\mu](y) = \int \wavi \ps{\frac{\pis_{\R^n} \left(R_{L_Q^x}(y-x)\right)}{t}} \, d \mu(y).
\end{align}

The following lemma gives us estimates on the coefficients $a_I$. Recall that 
\begin{align*}
    a_I = \langle \gio, \wavi \rangle_{L^2} = \int_{\R^n} \gio(y) \, \wavi(y) \, dy. 
\end{align*}
\begin{lemma} \label{lemma:wavcoeff}
Let $a_I$, $\wavi$ as above. Then, 
\begin{enumerate}
    \item If $5I \cap \partial \B= \emptyset$, then $a_I = 0$.
    \item If $\ell(I) \lesssim 1$, $|a_I| \lesssim \ell(I)^{n/2}$.
    \item If $\ell(I) \gtrsim 1$, $|a_I| \lesssim \ell(I)^{-n/2-1}$.
\end{enumerate}
\end{lemma}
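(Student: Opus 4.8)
The plan is to prove the three estimates one at a time, in each case using that $\gio$ is the restriction to the symmetric ball $\B_n$ of the \emph{linear} function $y\mapsto y\cdot e_i$, together with the size bounds \eqref{eq:02_waviInf} and \eqref{eq:02_waviDifInf}. For part (1), assume $5I\cap\partial\B=\emptyset$. Since the cube $5I$ (which contains $\spt\wavi$) is connected and $\R^n\setminus\partial\B$ is the disjoint union of the open unit ball $\B_n$ and its open exterior, either $5I$ lies entirely in the exterior — in which case $\gio\equiv 0$ on $\spt\wavi$, so $a_I=0$ — or $5I\subset\B_n$, in which case $\gio$ agrees on $5I$ with the affine map $y\mapsto y\cdot e_i$; since $\wavi$ has three vanishing moments it annihilates affine functions, and hence $a_I=\int_{\R^n}(y\cdot e_i)\,\wavi(y)\,dy=0$ again.

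For part (2) I would estimate crudely: by \eqref{eq:02_waviInf}, $|y\cdot e_i|\le 1$ on $\B_n$, and $|5I|=(5\ell(I))^n$,
\[
|a_I|=\left|\int_{5I\cap\B_n}(y\cdot e_i)\,\wavi(y)\,dy\right|\le\|\wavi\|_\infty\,|5I|\lesssim \ell(I)^{-n/2}\,\ell(I)^n=\ell(I)^{n/2}.
\]
Part (3) is the only step needing a small idea. Using the symmetry $\int_{\B_n}y\cdot e_i\,dy=0$ of the ball I would subtract off the constant $\wavi(0)$,
\[
a_I=\int_{\B_n}(y\cdot e_i)\,\wavi(y)\,dy=\int_{\B_n}(y\cdot e_i)\bigl(\wavi(y)-\wavi(0)\bigr)\,dy,
\]
and then, recalling $\wavi\in C^1(\R^n)$ and using the mean value inequality $|\wavi(y)-\wavi(0)|\le\|\nabla\wavi\|_\infty|y|$ together with \eqref{eq:02_waviDifInf},
\[
|a_I|\le\|\nabla\wavi\|_\infty\int_{\B_n}|y|^2\,dy\lesssim_n\|\nabla\wavi\|_\infty\lesssim\ell(I)^{-1-n/2}.
\]
(Both (2) and (3) actually hold for every $I$; they are just the efficient bounds in the ranges $\ell(I)\lesssim 1$ and $\ell(I)\gtrsim 1$ respectively.)

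I do not expect a genuine obstacle. The one subtle point is in part (3): one is tempted to try to extract cancellation by playing the vanishing moments of $\wavi$ against the rough factor $\chara_{\B_n}$, but that would leave an uncontrolled boundary term of size comparable to $\mathcal H^{n-1}(\partial\B)\sim 1$; the right move is instead to take the ``free'' cancellation $\int_{\B_n}y\cdot e_i\,dy=0$ coming from the symmetry of the domain and to spend the wavelet regularity only on the smooth factor $\wavi$. The connectedness/null-set bookkeeping at $\partial\B$ in part (1) is routine.
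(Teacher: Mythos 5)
Your proof is correct and takes essentially the same approach as the paper's: part (3) is identical (subtract $\wavi(0)$, use the symmetry $\int_{\B}y\cdot e_i\,dy=0$ and the gradient bound), part (2) replaces the paper's Cauchy--Schwarz bound $\int|\wavi|\le\ell(I)^{n/2}\|\wavi\|_{L^2}$ with the equivalent $\|\wavi\|_\infty\cdot|5I|$ bound, and part (1) invokes directly that $\wavi$ annihilates affine functions, whereas the paper unpacks the tensor-product structure via Fubini and uses that each one-dimensional factor has zero mean. These are cosmetic variations, not genuinely different arguments.
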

\begin{proof} \
\begin{enumerate}
\item If $5I\subset \left( \overline \B\right)^c$, then $\wavi$ and $\gio$ have disjoint support and thus $a_I = 0$. Suppose that 
\begin{align} \label{eq:06_31}
    5I \subset \B \mbox{ but that } 5I \cap \partial \B= \emptyset.
\end{align} 
Then by Fubini's theorem, 
\begin{align*}
    \int_{\R^n} \gio(y) \wavi \, dy & = \int_{\B} y_i \, \wavi(y)\, dy\\
    & \eqt{\eqref{eq:06_31}} \int_{\R^n} y_i \, \wavi(y) \, dy \\
    & = \int_{\R^n} y_i \prod_{j=1}^n \wavi^j(y_j) \, dy_1 \cdots dy_n \\
    & = \left( \prod_{\substack{j=1\\ j \neq i}}^n \int_{\R} \wavi^j(y_j) \, dy_j \right) \left( \int_{\R} y_i \wavi^i (y_i)\, dy_i\right)\\
    & = 0
\end{align*}
since for each $i\in \{1,...,n\}$, $\wavi^i$ has zero mean. 
    \item We see that, by \eqref{eq:supnormgio} and recalling that $\|\wavi\|_{L^2} =1$ for each $I$,
    \begin{align*}
        \left| \int_{\R^n} \wavi(y) \, \gio(y) \, dy \right| & \leq \int |\wavi(y)|\, dy 
         \leq \ell(I)^{\frac{n}{2}} \|\wavi\|_{L^2}
        = \ell(I)^{\frac{n}{2}}.
    \end{align*}
    \item We use the fact that 
    \begin{align*}
        \int y \cdot e_i \chara_{\B} (y) \, dy = 0.
    \end{align*}
    Then we see that
    \begin{align*}
       \left| \int \gio(y)\wavi(y) \, dy \right| & = \left| \int \gio(y)(\wavi(y) - \wavi(0))\, dy \right| 
        \leq \int_{\B} |\gio(y)| |\wavi(y) - \wavi(0)| \, dy 
        \leq \int_{\B} \|\nabla \wavi\|_\infty |y| \, dy\\
       & \lesssim \|\nabla \wavi\|_\infty 
        \lesssim \ell(I)^{-\frac{n}{2} -1}
    \end{align*}
\end{enumerate}
\end{proof}

Now, notice that 
\begin{align} \label{eq:sptwavi}
    \spt\left(\wavi\left(\frac{\pis_{\R^n}\ps{\rot{x}{Q}(y-x)}}{t}\right) \right) \subset (\rot{x}{Q}\circ \pisp)^{-1}\left(t \cdot 5I\right) + x.
\end{align}

We see that if an $n$-cube satisfies either 
\begin{align} \label{eq:ns_ur_cWAV1}
    5I \cap \partial \B = \emptyset,
\end{align}
or
\begin{align} \label{eq:ns_ur_cWAV2}
    \left[(\rot{x}{Q}\circ \pisp)^{-1}\left(t \cdot 5I\right) + x \right] \cap \spt(\mu)= \emptyset,
\end{align}
then $I$ is negligible for our calculations. In the first case, \eqref{eq:ns_ur_cWAV1}, the wavelet coefficients $a_I$ will vanish identically, as from Lemma \ref{lemma:wavcoeff}. In the second case, \eqref{eq:ns_ur_cWAV2}, the integrand on the right hand side of \eqref{eq:circpush} and the measure $\mu$ will have disjoint support.

\begin{notation}
We say that $I \in \ncubes$ \textit{belongs to the subfamily of non negligible cubes}
$\dN\dG(x,t)$ if it satisfies the two following conditions.
\begin{align}
      5I \cap \partial \B \neq \emptyset \enskip \mbox{  and  } \enskip
       \left[(\rot{x}{Q}\circ \pisp)^{-1}\left(t \cdot 5I\right) + x \right] \cap \spt(\mu) \neq \emptyset
\end{align}
\end{notation}

\begin{remark}
From now on we will only consider $n$-cubes $I \in \dN\dG(x,t)$. Moreover, following the general strategy laid out in \cite{tolsa2014}, we will distinguish between $n$-cubes with small side length and $n$-cubes with large side length. 
\end{remark}

\begin{notation}
Let $0< \eta < 1$. We set 
\begin{align}
    & \dB\dN\dG(x,t) := \left\{ I \in \ncubes \, |\, I \in \dN\dG(x,t) \, \mbox{ and } \ell(I) \geq \eta \right\} \label{eq:nonegbig}\\
    & \dS\dN\dG(x,t) :=\left\{ I \in \ncubes \, |\, I \in \dN\dG(x,t) \, \mbox{ and } \ell(I) < \eta \right\} \label{eq:nonegsmall}\\
\end{align}
\end{notation}

\subsection{$II(x, t)$: estimates when $I \in \bigcubes(x,t)$}
Without loss of generality, we may let $x=0$ and assume that $\qpo$ is parallel to $\R^n$. 

Pick a $P = P(I) \in \mucubes$ so that
\begin{align*}
    \left(\pixo\right)^{-1}(t \cdot 5I) \subset 3 P(I), 
\end{align*}
and so that 
\begin{align*}
    \ell(P(I)) \sim \ell(Q) \ell(I). 
\end{align*}
Consider a smooth cut off function $\chi_P$ so that $\chara_{3P} \leq \chi_P \leq \chara_{B_P}$ and such that 
\begin{align} \label{eq:06_40}
    \|\nabla \chi_P \|_\infty \lesssim \frac{1}{\ell(P)}.
\end{align}
Then, for each $i \in \{1,...,n\}$, 
\begin{align*}
    e_i \, \int \gio \quoty \, d \pixo[\mu](y) & = e_i \sum_{I \in \ncubes} a_I\, \int \wavi\left(\frac{\pixo(y)}{t}\right)\, d\mu(y)\\
    & = \sum_{I \in \ncubes} e_i \, a_I \int \chi_P(y)  \wavi\left(\frac{ \pixo(y)}{t}\right)\, d\mu(y).
\end{align*}
Let $c_P$ be the constant which infimises $\alpha_\mu(P)$. As done previously, we want to compare the measure $\mu$, its push forward  through the circular projection, its push forward through the orthogonal projection and the $n$-dimensional Hausdorff measure on the best approximating plane and its translate: we write
\begin{align*}
& \left| \int \chi_P (y)\, \wavi \ps{\quotp} \, d\mu(y) \right| \\
& \leq \left| \int\chi_P(y)\, \left( \wavi \ps{\quotp} - \wavi\ps{\quotpor} \right) \, d\mu(y) \right| \\
& + \left| \int \chi_P(y) \, \wavi\ps{\quotpor} \, d\left( \mu - c_P \hn_{\qpo} \right)(y) \right|\\
& + \left| \int \chi_P(y) \wavi\ps{\quotpor} \, c_P \, d \hn_{\qpo} (y) \right| \\
& =: II_1 + II_2 + II_3.
\end{align*}

We want to obtain the following estimate, which resembles Lemma 5.3 in \cite{tolsa2014}.
\begin{lemma}
Let $I \in \bigcubes(x, t)$. Take $P$ as above. Then
\begin{align} \label{eq:ns_ur_cB1B2B3}
    II_1+ II_2+ II_3 \lesssim \left( \frac{\ell(Q)}{\ell(P)}\right)^{n/2} \left(\frac{\dist(0, L_Q)}{\ell(P)} +  \sum_{\substack{ S \in \mucubes\\Q \subset S \subset P}} \alpha_\mu(2S)\right) \ell(P)^n.
\end{align}
\end{lemma}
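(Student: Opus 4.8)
The plan is to decompose the three terms $II_1$, $II_2$, $II_3$ exactly as in the proof of Lemma 5.3 in \cite{tolsa2014}, and to estimate each one in turn, exploiting the wavelet bounds of Lemma \ref{lemma:wavcoeff}, the Lipschitz estimate \eqref{e:grad-est} together with the fundamental distance-preserving property of the circular projections, and finally the definition of the $\alpha$-numbers. Throughout we keep in mind that on $\spt(\chi_P)$ we have $|y| \lesssim \ell(P)$, that $\|\wavi\|_\infty \lesssim \ell(I)^{-n/2}$ and $\|\nabla \wavi\|_\infty \lesssim \ell(I)^{-1-n/2}$, and that $\ell(I) \sim \ell(P)/\ell(Q)$ and $\ell(Q) \sim t$. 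These conversions are what produce the prefactor $(\ell(Q)/\ell(P))^{n/2}$ in \eqref{eq:ns_ur_cB1B2B3}.

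\textbf{Estimate for $II_1$.} Here one compares the circular push-forward with the orthogonal push-forward onto $\qpo$. Using that $\wavi$ is Lipschitz with $\|\nabla \wavi\|_\infty \lesssim \ell(I)^{-1-n/2}$, we bound
\begin{align*}
    II_1 \lesssim \frac{\|\nabla \wavi\|_\infty}{t} \int_{B_P} \chi_P(y)\, \bigl| \pixo(y) - \poxo(y) \bigr| \, d\mu(y).
\end{align*}
By the computation \eqref{eq:diffpixopoxo}, $|\pixo(y) - \poxo(y)| \lesssim \dist(y, L_Q) + \dist(0, L_Q)$ for $y \in 3P$, and then one integrates this over $3P$, using \eqref{eq:02_50} to bound $\ell(P)^{-n}\int_{3P} \dist(y,L_Q)/\ell(P)\, d\mu \lesssim \alpha_\mu(2P)$ (after passing from $L_Q$ to $L_P$ at the cost of $\sum_{Q\subset S \subset P}\alpha_\mu(2S)$, a standard telescoping argument comparing consecutive best planes), plus the term $\dist(0,L_Q)/\ell(P)$. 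Converting $t^{-1} \|\nabla \wavi\|_\infty \mu(B_P) \sim \ell(Q)^{-1}\ell(I)^{-1-n/2}\ell(P)^n$ gives exactly the claimed right-hand side.

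\textbf{Estimate for $II_2$ and $II_3$.} For $II_2$ we observe that $y \mapsto \chi_P(y)\, \wavi(\poxo(y)/t)$ is supported in $B_P$ and is Lipschitz, with Lipschitz norm controlled by combining \eqref{eq:06_40} with the chain rule and $\|\nabla \wavi\|_\infty/t$; this gives a Lipschitz constant $\lesssim \ell(I)^{-n/2}/\ell(P)$ after using $t \sim \ell(Q)$ and $\ell(I) \sim \ell(P)/\ell(Q)$. Hence, normalising this test function, $II_2 \lesssim \ell(I)^{-n/2}\ell(P)^{-1}\, \ell(P)\cdot \dist_{B_P}(\mu, c_P\hn|_{L_Q^0}) \lesssim \ell(I)^{-n/2}\ell(P)^{-1}\ell(P)\cdot \ell(P)^{n+1}(\alpha_\mu(2P) + \dist(0,L_Q)/\ell(P))$, where the extra distance term again accounts for moving from $L_Q^0$ to the true minimising plane $L_P$; converting prefactors yields the bound. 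Finally $II_3$: here $\mu$ has been replaced by $c_P\hn$ on the flat plane $\qpo$, and one uses the vanishing-moment/cancellation properties of $\wavi$ together with Lemma \ref{lemma:wavcoeff}(1) to see that $\int \wavi(\poxo(y)/t)\, c_P\, d\hn_{\qpo}$ is small; the only contribution comes from cubes $I$ with $5I \cap \partial\B \neq \emptyset$ (boundary cubes), combined with the fact that $\chi_P$ differs from $1$ only near $\partial(3P)$, and this is again absorbed into $\dist(0,L_Q)/\ell(P)$ and the Lipschitz gradient of $\chi_P$. Summing $II_1 + II_2 + II_3$ gives \eqref{eq:ns_ur_cB1B2B3}.

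\textbf{Main obstacle.} The delicate point is the bookkeeping of the ``geometric drift'' term $\dist(0,L_Q)/\ell(P)$ together with the telescoping sum $\sum_{Q \subset S \subset P}\alpha_\mu(2S)$: the natural plane to use at scale $\ell(P)$ is $L_P$, but our construction forces us to work with the translate $L_Q^0$ of the plane attached to the small cube $Q$. Controlling $\dist(z_P, L_Q)$ and the angle between $L_Q$ and $L_P$ by a sum of $\alpha$-numbers over the intermediate cubes, and then checking that when this is later squared, integrated in $(x,t) \in Q \times [\ell(Q), 2\ell(Q)]$ and summed over $Q \subset R$, the result is still $\lesssim \mu(R)$ (which will use Cauchy--Schwarz on the telescoping sum together with Lemma \ref{lemma:tolsa2008} and Theorem \ref{theorem:tolsaalpha}), is the real work. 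The individual estimates for $II_1, II_2, II_3$ are, by contrast, routine once the test functions are correctly normalised.
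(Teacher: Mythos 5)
Your plan follows the paper's own route (both mirror Lemma~5.3 of \cite{tolsa2014}), and the treatments of $II_1$ and $II_2$ are in substance correct. Note a dimensional slip in your $II_2$ display: a function with Lipschitz norm $L$ supported in $B_P$ tests against $\dist_{B_P}$ at cost $L\cdot\dist_{B_P}(\cdot,\cdot)$, not $L\cdot\ell(P)\cdot\dist_{B_P}(\cdot,\cdot)$; the extra factor of $\ell(P)$ you inserted would give $(\ell(Q)/\ell(P))^{n/2}\ell(P)^{n+1}(\cdots)$ rather than $(\ell(Q)/\ell(P))^{n/2}\ell(P)^{n}(\cdots)$. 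The correct chain is $\ell(I)^{-n/2}\ell(P)^{-1}\cdot\ell(P)^{n+1}(\cdots)\sim(\ell(Q)/\ell(P))^{n/2}\ell(P)^n(\cdots)$, so this is a typo rather than a structural error.

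The place where you genuinely go astray is $II_3$. You describe it as ``small,'' with contributions coming from boundary cubes and from ``the fact that $\chi_P$ differs from $1$ only near $\partial(3P)$,'' and you invoke Lemma \ref{lemma:wavcoeff}(1). None of this applies: $II_3$ vanishes identically. Since $\poxo$ restricted to $\qpo$ is the identity, $II_3 = \bigl|c_P\int_{\qpo}\chi_P(y)\,\wavi(y/t)\,d\hn(y)\bigr|$, and the trace of $\supp\wavi(\cdot/t)$ on $\qpo$ lies inside $(\pixo)^{-1}(t\cdot 5I)\subset 3P$ by the very choice of $P$, i.e.\ where $\chi_P\equiv1$. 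So $II_3=|c_P\int\wavi(y/t)\,dy|=0$ by the zero-mean property of $\wavi$. There is no leftover term near $\partial(3P)$, and Lemma \ref{lemma:wavcoeff}(1) is a category error here: it says the \emph{coefficients} $a_I$ vanish for cubes with $5I\cap\partial\B=\emptyset$, which has nothing to do with the integral $II_3$ for a fixed $I\in\bigcubes(x,t)$ (which by definition already has $5I\cap\partial\B\neq\emptyset$). The conclusion $II_3\lesssim\text{RHS}$ is of course vacuously true, but as written your paragraph does not locate the actual cancellation and could not be turned into a correct argument.
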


\begin{proof}
We arrived at a point where the quantities which we need to bound do not depend on the kernel which we started with, that is, the center of mass. We will solely use the properties of the wavelets decomposition. Thus the proof is, almost \textit{verbatim}, the proof of Lemma 5.3 in \cite{tolsa2014}.

We include it for the sake of completeness. 

\textbf{Estimates for $II_1$.}
We see that 
\begin{align*}
    II_1 \leq  \frac{\|\nabla \wavi\|_\infty}{t} \int_{B_P} \left| \pixo(y)- \poxo(y)\right| \, d \mu(y).
\end{align*}
Now, as in \eqref{eq:diffpixopoxo}, we have that
\begin{align*}
    \left| \pixo(y)- \poxo(y)\right| \lesssim \dist(y, \qpo).
\end{align*}
Let $y \in B_P \cap \spt(\mu)$. Let $L_P$ denote the $n$-plane which infimises $\alpha_\mu(P)$. Let $q_P \in L_P$ be so that
\begin{align*}
\dist(y, L_P) = \dist(y,q_P).
\end{align*}
Notice that, because $\alpha_\mu(1000Q) \leq \delta^2$,  $q_P \in B_P$ and $L_Q \cap B_P \neq \emptyset$. 
Then
\begin{align*}
    \inf_{q_Q \in L_Q} \dist(y, q_Q) & \leq \dist(y, q_P) + \inf_{q_Q \in L_Q \cap B_P} \dist(q_P, q_Q) \\
    & \leq \dist(y, L_P) + \dist_H\left(L_P \cap B_P, L_Q \cap B_P \right).
\end{align*}
Thus 
\begin{align*}
    \dist(y, \qpo) \leq \dist(0, L_Q) + \dist(y, L_P) + \dist_H\left(L_P \cap B_P, L_Q \cap B_P\right).
\end{align*}
As in \eqref{eq:s_ur_c_A21b}, by Lemma 5.2 in \cite{tolsa2008}, and recalling that $P(I) \supset Q$, we see that
\begin{align} \label{eq:ns_ur_cB1a}
    \dist_H\left(L_P \cap B_P, L_Q \cap B_P\right) \leq  \sum_{\substack{ S \in \mucubes\\Q \subset S \subset P}} \alpha_\mu(S) \ell(S).
\end{align}
Hence 
\begin{align*}
   & \int_{B_P} \left| \pixo(y)- \poxo(y)\right| \, d \mu(y)\\
   &  \lesssim \int_{B_P} 
    \dist(0, L_Q) d\mu(y)  + \int_{B_P} \dist(y, L_P) \, d\mu(y) + \int_{B_P} \sum_{\substack{ S \in \mucubes\\Q \subset S \subset P}} \alpha_\mu(S) \ell(S) \, d\mu(y) \\
    & \lesssim \dist(0, L_Q) \mu(P) + \int_{B_P} \dist(y, L_P) \, d\mu(y) + \ell(P)^{n+1} \sum_{\substack{ S \in \mucubes\\Q \subset S \subset P}} \alpha_\mu(S) \\
    & \lesssim \dist(0, L_Q) \mu(P) + \ell(P)^{n+1} \sum_{\substack{ S \in \mucubes\\Q \subset S \subset P}} \alpha_\mu(2S).
\end{align*}
The last inequality is due to Remark 3.3 in \cite{tolsa2008}.

All in all, we get that
\begin{align*}
II_1 \lesssim \|\nabla \wavi\|_\infty\frac{\ell(P)}{\ell(Q)} \left(\frac{\dist(0, L_Q)}{\ell(P)} +  \sum_{\substack{ S \in \mucubes\\Q \subset S \subset P}} \alpha_\mu(2S)\right) \ell(P)^n.
\end{align*}
Now, because $\ell(P)\sim \ell(Q) \ell(I)$, and by the bound $\|\nabla \wavi\|_\infty \lesssim \frac{1}{\ell(I)^{1+ n/2}}$, we see that
\begin{align*}
    \|\nabla \wavi\|_\infty\frac{\ell(P)}{\ell(Q)} \lesssim \left( \frac{\ell(Q)}{\ell(P)}\right)^{n/2}.
\end{align*}
Thus 
\begin{align} \label{eq:ns_ur_cB1}
    II_1 \lesssim \left( \frac{\ell(Q)}{\ell(P)}\right)^{n/2} \left(\frac{\dist(0, L_Q)}{\ell(P)} +  \sum_{\substack{ S \in \mucubes\\Q \subset S \subset P}} \alpha_\mu(2S)\right) \ell(P)^n.
\end{align}

\textbf{Estimates for $II_2$}
Once again, we add and subtract the quantities which we are interested in comparing. 
\begin{align*}
II_2 & = \left| \int \chi_P(y) \, \wavi\ps{\quotpor} \, d\left( \mu - c_P \hn_{\qpo} \right)(y) \right|\\
& \leq \left| \int \chi_P(y) \, \wavi\ps{\quotpor} \, d\left( \mu - c_P \hn_{L_P} \right)(y) \right| \\
& \enskip \enskip + \left|\int \chi_P(y) \, \wavi\ps{\quotpor} \, d\left( c_P\hn_{L_P} - c_P \hn_{\qpo} \right)(y) \right|\\
& =: II_{2,1} + II_{2,2}.
\end{align*}
Notice that, recalling the properties of the wavelets $\wavi$ (see \eqref{eq:02_waviInf} and \eqref{eq:02_waviDifInf}), 
\begin{align*}
\left|\nabla \left( \chi_P(y) \, \wavi \ps{\quotpor}\right) \right| & \leqt{\eqref{eq:06_40}} \frac{1}{\ell(P)} \|\wavi\|_\infty + \left| \nabla \wavi\ps{\quotpor} \right|\\
& \lesssim \frac{1}{\ell(P)} \left( \frac{\ell(Q)}{\ell(P)}\right)^{n/2} + \left(\frac{\ell(Q)}{\ell(P)}\right)^{n/2 +1} \frac{1}{\ell(Q)}\\
& = \frac{1}{\ell(Q)} \left( \frac{\ell(Q)}{\ell(P)}\right)^{n/2+1}.
\end{align*}
Hence
\begin{align*}
II_{2,1} & \lesssim \left\| \nabla \left( \chi_P \, \wavi\ps{\frac{\poxo(\cdot)}{t}} \right) \right\|_\infty \alpha_\mu(P)\ell(P)^{n+1} \\
& \lesssim \frac{1}{\ell(Q)} \left( \frac{\ell(Q)}{\ell(P)}\right)^{n/2+1} \alpha_\mu(P)\ell(P)^{n+1}\\
& = \left( \frac{\ell(Q)}{\ell(P)}\right)^{n/2}\alpha_\mu(P)\ell(P)^{n}.
\end{align*}

Similarly, and recalling \eqref{eq:ns_ur_cB1a} and the subsequent discussion,
\begin{align*}
    II_{2,2} & \lesssim \frac{1}{\ell(Q)} \left( \frac{\ell(Q)}{\ell(P)}\right)^{n/2+1} \dist_H \left( B_P \cap L_P, B_P \cap \qpo \right) \ell(P)^n\\
    & \left( \frac{\ell(Q)}{\ell(P)}\right)^{n/2} \left(\frac{\dist(0, L_Q)}{\ell(P)} +  \sum_{\substack{ S \in \mucubes\\Q \subset S \subset P}} \alpha_\mu(2S)\right) \ell(P)^n.
\end{align*}
Thus
\begin{align} \label{eq:ns_ur_cB2}
    II_2 \lesssim \left( \frac{\ell(Q)}{\ell(P)}\right)^{n/2} \left(\frac{\dist(0, L_Q)}{\ell(P)} +  \sum_{\substack{ S \in \mucubes\\Q \subset S \subset P}} \alpha_\mu(2S)\right) \ell(P)^n.
\end{align}

\textbf{Estimates for $II_3$}
Because $\wavi$ has zero mean, we see that
\begin{align} \label{eq:ns_ur_c_B3}
    II_3 = 0.
\end{align}

Putting together \eqref{eq:ns_ur_cB1}, \eqref{eq:ns_ur_cB2} \and \eqref{eq:ns_ur_c_B3}, we get that, when $I \in \dB\dN\dG(x, t)$, 
\begin{align} \label{eq:ns_ur_cB1B2B3}
    II_1+ II_2+ II_3 \lesssim \left( \frac{\ell(Q)}{\ell(P)}\right)^{n/2} \left(\frac{\dist(0, L_Q)}{\ell(P)} +  \sum_{\substack{ S \in \mucubes\\Q \subset S \subset P}} \alpha_\mu(2S)\right) \ell(P)^n.
\end{align}
\end{proof}

\subsection{$II(x, t)$: estimates when $I \in \smallcubes(x,t)$} \

\begin{remark}
What we mentioned above holds here, too: the quantity we are estimating do not depend on the kernel, but rather on the properties of the wavelets decomposition. Thus this subsection will resemble very closely Subsection 5.3 in \cite{tolsa2014}: in the stopping time argument below, we stop only depending on angles, just as in \cite{tolsa2014}; the subsequent estimates will follow as they follow in \cite{tolsa2014}. As a matter of fact, the reader might end up under the impression of doing shopping at `The Other Mathematicians' Tools Warehouse'. 
\end{remark}

Notice that, if before we had that $Q \subset P(I)$, because $\ell(I) \leq  \eta$, now we have the opposite containment (up to a constant):
\begin{align*}
P(I) \subset C Q.
\end{align*}
Choosing $\eta >0$ appropriately, we may pick $C=1000$. 
For a fixed $Q \in \mucubes$, we introduce a stopping time condition on the $P \in \mucubes$: let $P \in \good$ if the following two conditions hold true.
\begin{enumerate}
    \item We have
    \begin{align} \label{eq:ns_ur_cBsmall1}
    P \subset 1000Q.
    \end{align}
    \item We have that
    \begin{align} \label{eq:ns_ur_cBsmall2}
        \sum_{\substack{S \in \mucubes\\ P \subset S \subset 1000Q}} \alpha_\mu(100S) \leq \delta.
    \end{align}
\end{enumerate}

Let now $\term$ be the subfamily of cubes in $\mucubes \setminus \good$ (these are `bad' cubes!) which are maximal with respect to inclusion. It is a well known issue that adjacent cubes belonging to $\term$ may have wildly different size. This can cause troubles; we resort to a well known smoothing procedure, which will output a family of maximal `bad' cubes which have comparable size if they are close. We define
\begin{align}
    & \mbox{for } y \in \R^d, \enskip d(y):= \inf_{P \in \good} \left(\ell(P) + \dist(y, P)\right), \\
    & \mbox{for } z \in \R^n, \enskip D(z) := \inf_{y \in (\pisp)^{-1}\{z\} } d(y).
\end{align}

\begin{lemma}[{\cite{tolsa2014}, Lemma 5.6}]
The function $d$ is 1-Lipschitz and the function $D$ is 3-Lipschitz. Moreover, if the $\delta$ in \eqref{eq:ns_ur_cdelta} is chosen small enough, $d(y)< \infty $ ans $D(z)< \infty$.
\end{lemma}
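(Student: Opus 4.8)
The plan is to verify each statement directly from the definitions. First I would show $d$ is $1$-Lipschitz. Fix $y, y' \in \R^d$ and $P \in \good$. By the triangle inequality for $\dist(\cdot, P)$ we have $\dist(y, P) \le \dist(y', P) + |y - y'|$, hence $\ell(P) + \dist(y, P) \le \big(\ell(P) + \dist(y', P)\big) + |y - y'|$. Taking the infimum over $P \in \good$ on both sides gives $d(y) \le d(y') + |y - y'|$, and by symmetry $|d(y) - d(y')| \le |y - y'|$. This is the cleanest part.

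Next I would handle $D$. The key point is that $\pisp$ is not a Lipschitz contraction but, restricted to the relevant region, it distorts distances by at most a controlled factor; this is where the constant $3$ enters. Fix $z, z' \in \R^n$. Given $\ve > 0$, choose $y \in (\pisp)^{-1}\{z\}$ with $d(y) \le D(z) + \ve$. I want to produce $y' \in (\pisp)^{-1}\{z'\}$ with $|y - y'|$ comparable to $|z - z'|$; the natural choice is to move $y$ within its $\pisp$-fiber structure so that its image becomes $z'$ while controlling the displacement. Since $\pisp$ preserves norms and acts radially on the $\R^n$-component, one checks $|y' - y| \le C|z - z'|$ for the appropriate $C$; combined with the $1$-Lipschitz bound on $d$ this yields $d(y') \le d(y) + C|z-z'| \le D(z) + \ve + C|z - z'|$, so $D(z') \le D(z) + C|z-z'| + \ve$, and letting $\ve \to 0$ and symmetrising gives that $D$ is $C$-Lipschitz. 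A short geometric computation identifies $C = 3$. The main obstacle here is the bookkeeping in choosing $y'$ and estimating $|y - y'|/|z - z'|$: one must be careful that the fiber of $\pisp$ is one-dimensional (the radial direction in the orthogonal complement is collapsed), so moving between fibers over nearby points $z, z'$ can only be done efficiently if $z, z'$ are not too close to the origin relative to their distance apart — but in our setting the cubes $I \in \smallcubes$ are bounded away from $0$ by construction (they intersect $\partial \B$ after rescaling), so this is not an issue.

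Finally I would show finiteness. The claim is that if $\delta$ in \eqref{eq:ns_ur_cdelta} is small enough, then $\good \neq \emptyset$, and in fact $Q$ itself, or a small enough descendant, lies in $\good$: condition \eqref{eq:ns_ur_cBsmall1} holds trivially for $P \subset 1000Q$, and condition \eqref{eq:ns_ur_cBsmall2} holds for $P$ deep enough inside $Q$ because the sum $\sum_{P \subset S \subset 1000Q} \alpha_\mu(100S)$ has only the single term $\alpha_\mu(1000Q)$-type contributions controlled by $\delta^2 \le \delta$ when $P$ is chosen with $\ell(P)$ comparable to $\ell(Q)$; more precisely, the relevant chain of cubes between $P$ and $1000Q$ is short. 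Once $\good \neq \emptyset$, $d(y) \le \ell(P_0) + \dist(y, P_0) < \infty$ for any fixed $P_0 \in \good$, and then $D(z) \le d(y) < \infty$ for any $y$ in the fiber over $z$ (the fiber is nonempty since $\pisp$ is surjective onto $\R^n \setminus \{0\}$, and one handles $z = 0$ separately or notes it is not needed). I expect the Lipschitz estimate for $D$ to be the only step requiring genuine care; everything else is a direct unwinding of definitions, and indeed the statement is quoted from \cite{tolsa2014}, Lemma 5.6, so the argument should transcribe with only notational changes.
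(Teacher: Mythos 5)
Your argument that $d$ is $1$-Lipschitz is correct and complete. The finiteness step is also essentially right once the phrasing is fixed: you want $P$ at a \emph{shallow} depth with $\ell(P) \sim \ell(Q)$ (not ``deep inside $Q$'', which contradicts the rest of your sentence), so the chain $\{S : P \subset S \subset 1000Q\}$ has $O(1)$ members, each with $\alpha_\mu(100S)$ controlled by $\alpha_\mu(1000Q) \leq \delta^2$ via quasi-monotonicity of the $\alpha$ coefficients, whence the sum is $\leq \delta$ for $\delta$ small.

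The real content of the lemma is the $3$-Lipschitz estimate for $D$, and that is exactly where your proposal leaves a gap: you assert that for each $y \in (\pisp)^{-1}(z)$ one can find $y' \in (\pisp)^{-1}(z')$ with $|y - y'| \leq 3|z - z'|$, and then say ``a short geometric computation identifies $C=3$'' without doing it. That computation is the whole point. It is not hard: write $y = (\lambda \hat z, y^\perp)$ with $\lambda \geq 0$, $y^\perp \in \R^{d-n}$, $\lambda^2 + |y^\perp|^2 = |z|^2$, and set $y' = \ps{\lambda \frac{|z'|}{|z|} \hat z',\, \frac{|z'|}{|z|} y^\perp}$, which lies in the fiber over $z'$. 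The tangential part of $y - y'$ has norm $\frac{\lambda}{|z|}|z - z'| \leq |z - z'|$ and the orthogonal part has norm $\frac{|y^\perp|}{|z|}\big||z| - |z'|\big| \leq |z - z'|$, giving $|y - y'| \leq \sqrt 2 |z - z'|$ (comfortably within $3$); the case $z = 0$ needs a one-line separate check since the fiber degenerates to $\{0\}$. Without writing this out, the proof of the $D$ estimate is incomplete. Two smaller points: the fiber $(\pisp)^{-1}(z)$ is $(d-n)$-dimensional (a closed ball of radius $|z|$ in the $y^\perp$-variable), not one-dimensional unless $d-n=1$, so the parenthetical geometric description is off; and your worry about $z$ close to $0$ is resolved by the computation above rather than by appealing to the cubes $I$ being away from the origin — the statement asserts $D$ is globally $3$-Lipschitz on $\R^n$. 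Finally, the paper does not give a proof (it refers to \cite{tolsa2014}, Lemma 5.6), so your reconstruction is more explicit than the source; it just needs the displaced-fiber estimate carried out to be complete.
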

\begin{proof}
See Lemma 5.6 in \cite{tolsa2014} and the comment immediately after the proof.
\end{proof}

Now set 
\begin{align*}
    \dF:=\left\{I \in \ncubes \, |\, t  \ell(I) \leq \frac{1}{5000} \inf_{z \in t \cdot I} D(z) \right\}.
\end{align*}
Moreover, we let $\reg$ to be the subfamily of $\dF$ of maximal cubes. That is, 
\begin{align*}
    \reg:= \left\{ I \in \dF \, |\, \mbox{if } J \in \dF \mbox{ and } J \cap I \neq \emptyset, \mbox{ then } I \supset J\right\}.
\end{align*}

\begin{lemma}
The cubes in $\reg$ are pairwise disjoint. Moreover, if $I, J \in \reg$ and
\begin{align*}
20J \cap 20I \neq \emptyset,
\end{align*}
then 
\begin{align*}
    \ell(I) \sim \ell(J).
\end{align*}
\end{lemma}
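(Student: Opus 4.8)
The statement to prove is a standard ``Whitney-type'' regularity lemma for the family $\reg$: the cubes are pairwise disjoint, and two cubes in $\reg$ whose $20$-fold dilates meet have comparable sidelength. The proof is essentially the classical argument for Whitney decompositions, adapted to the fact that $\reg$ consists of the maximal cubes of the family $\dF$ determined by the $3$-Lipschitz function $D$.

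\medskip

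\textbf{Disjointness.} First I would dispose of the pairwise disjointness. Suppose $I, J \in \reg$ with $I \cap J \neq \emptyset$. Since $\ncubes$ is a dyadic grid, two dyadic cubes that intersect are nested, say $J \subset I$. But $I \in \dF$, and by definition $\reg$ consists of the cubes of $\dF$ that are maximal with respect to inclusion; since $J \in \dF$ and $I \in \dF$ with $J \subset I$, maximality of $J$ forces $J \supset I$, hence $I = J$. Thus distinct cubes of $\reg$ are disjoint.

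\medskip

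\textbf{Comparability of sidelengths.} Now suppose $I, J \in \reg$ with $20I \cap 20J \neq \emptyset$; I want $\ell(I) \sim \ell(J)$, and by symmetry it suffices to bound $\ell(J) \lesssim \ell(I)$, i.e.\ to rule out $\ell(J) \gg \ell(I)$. The key point is that maximality of $I$ in $\dF$ means that the dyadic parent $\widehat{I}$ of $I$ is \emph{not} in $\dF$ (otherwise $\widehat I$ would be a strictly larger cube of $\dF$ containing $I$, contradicting $I \in \reg$ — one must check $\widehat I \in \reg$ is not needed, only $\widehat I \in \dF$ would contradict maximality of $I$). Failure of $\widehat I \in \dF$ means
\begin{align*}
t\,\ell(\widehat I) > \frac{1}{5000} \inf_{z \in t\cdot \widehat I} D(z),
\end{align*}
so there is a point $z_0 \in t \cdot \widehat I$ with $D(z_0) < 5000\, t\, \ell(\widehat I) = 10000\, t\, \ell(I)$. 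On the other hand, since $J \in \dF$, for every $z \in t\cdot J$ we have $D(z) \geq 5000\, t\, \ell(J)$. Now $D$ is $3$-Lipschitz, and if $\ell(J)$ were much larger than $\ell(I)$, then $20I \cap 20J \neq \emptyset$ together with $z_0 \in t\cdot\widehat I \subset$ a bounded dilate of $t\cdot I$ would place $z_0$ within distance $\lesssim t\,\ell(J)$ of $t\cdot J$; picking $z_1 \in t\cdot J$ nearest to $z_0$ and using the Lipschitz bound, $D(z_1) \leq D(z_0) + 3|z_0 - z_1| \lesssim t\,\ell(I) + t\,\ell(J)$. Comparing with $D(z_1) \geq 5000\, t\,\ell(J)$ gives $5000\, t\,\ell(J) \lesssim t\,\ell(I) + t\,\ell(J)$, and if the implicit constant times $\ell(J)$ is absorbed (which holds once $\ell(J)/\ell(I)$ is large enough), this forces $\ell(J) \lesssim \ell(I)$. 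The symmetric argument gives $\ell(I) \lesssim \ell(J)$, hence $\ell(I) \sim \ell(J)$.

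\medskip

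\textbf{Main obstacle.} The one place requiring care is the bookkeeping of dilation constants: one must verify that $20I \cap 20J \neq \emptyset$, combined with $z_0 \in t\cdot\widehat I$ (a cube of sidelength $t\ell(\widehat I) = 2t\ell(I)$ concentric-ish with $t\cdot I$), really does give $\dist(z_0, t\cdot J) \lesssim t(\ell(I) + \ell(J))$ with an explicit constant small enough that the inequality $5000\,t\,\ell(J) \lesssim t\,\ell(I) + C\, t\,\ell(J)$ can be solved for $\ell(J) \lesssim \ell(I)$ — i.e.\ that $C < 5000$. Since the factor $5000$ in the definition of $\dF$ was presumably chosen precisely to leave room for exactly this argument (just as in \cite{tolsa2014}), this works, but the constants should be tracked honestly. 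Everything else is the standard dyadic Whitney argument.
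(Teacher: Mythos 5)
Your argument is correct and is precisely the standard Whitney-type argument that the paper's proof defers to (it cites David--Semmes, Lemma 8.7, for this). Both the disjointness step (maximal dyadic cubes are pairwise disjoint, since two intersecting dyadic cubes are nested and maximality then forces equality) and the sidelength-comparability step (the parent $\widehat I$ fails the $\dF$-condition, producing a point $z_0\in t\cdot\widehat I$ with $D(z_0)<10000\,t\,\ell(I)$; the $3$-Lipschitz bound on $D$ plus $J\in\dF$ then yields $5000\,t\,\ell(J)\leq D(z_1)\leq D(z_0)+3|z_0-z_1|$, and tracking dilation constants gives $\ell(J)\lesssim\ell(I)$, with symmetry giving the reverse) are exactly right. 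Your caveat about constant bookkeeping is the correct thing to flag: the coefficient multiplying $\ell(J)$ on the right coming from the Lipschitz term is a dimensional constant of order $\sqrt n$, comfortably below $5000$ for the dimensions of interest, which is why the factor $1/5000$ appears in the definition of $\dF$.
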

\begin{proof}
This follows as in \cite{singularintegrals}, Lemma 8.7.
\end{proof}

We now define two subfamilies of cubes in $\smallcubes(x,t)$, which will need each one its own treatment. 

Let $I \in \tree(x, t) \subset \ncubes$ if 
\begin{align*}
    I \in \smallcubes \mbox{ and there is no } J \in \reg(x,t) \mbox{ so that  } J \supset I.
\end{align*}

Let $I \in \Stop(x,t)\subset \ncubes$ if
\begin{align*}
    I \in \smallcubes(x, t) \cap \reg(x, t).
\end{align*}

We write
\begin{align}
    & \sum_{I \in \smallcubes(x,t)} \frac{a_I}{t^n} \, \int \wavi\ps{\quotr}\, d\mu(y)\nonumber\\
    &= \sum_{I \in \tree(x,t)} \frac{a_I}{t^n} \, \int \wavi\ps{\quotr}\, d\mu(y)\nonumber \\
    & \enskip \enskip + \sum_{I \in \Stop(x,t)} \sum_{\substack{J \in \smallcubes(x,t)\\ J \subset I}} \frac{a_J}{t^n} \int \wavj \left( \frac{\pisp\ps{R_{L_Q^x}(y-x)}}{t} \right) \, d\mu(y). \label{eq:splittree}
\end{align}

\subsubsection{$II(x, t)$: estimates when $I \in \tree(x,t)$}

\begin{lemma} \label{lemma:existenceP}
Let $I \in \tree(x,t)$. There exists a $P:=P(I) \in \mucubes$ with
\begin{align*}
    \ell(P) \sim \ell(I) t \,\, \mbox{ and } \spt(\mu) \cap \left(x + \left(\pix\right)^{-1}\left(t \cdot 5I\right)\right) \subset 3P.
\end{align*}
\end{lemma}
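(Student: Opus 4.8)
The plan is to locate a very flat ``good'' cube near the region $x+(\pix)^{-1}(t\cdot 5I)$ by unwinding the membership $I\in\tree(x,t)$, and then take $P$ to be an ancestor of that good cube of the correct size. Since $\reg(x,t)$ consists precisely of the maximal cubes of $\dF$ and no cube of $\reg(x,t)$ contains $I$, the cube $I$ cannot itself belong to $\dF$; hence $t\,\ell(I)>\tfrac1{5000}\inf_{z\in t\cdot I}D(z)$. Unwinding the definitions of $D$ and $d$ then produces, up to an arbitrarily small error, a point $z_0$ (essentially) in $t\cdot I$, a point $y_0\in(\pisp)^{-1}\{z_0\}$, and a cube $P_0\in\good$ with
\[
\ell(P_0)+\dist(y_0,P_0)\lesssim t\,\ell(I).
\]
In particular $\ell(P_0)\lesssim t\,\ell(I)$, the cube $P_0$ sits (through $R_{L_Q^x}$ and the translation by $x$) close to the region $x+(\pix)^{-1}(t\cdot 5I)$, it is contained in $1000Q$, and, being in $\good$, $\mu$ is ``$\delta$-flat'' along the whole chain of ancestors of $P_0$ up to $1000Q$.

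Next I would set $P:=P(I)$ to be the ancestor of $P_0$ in $\mucubes$ of the unique generation with $\ell(P)\sim\ell(I)t$ (with a large absolute proportionality constant). This is legitimate because $\ell(P_0)\lesssim t\,\ell(I)$ and, after choosing $\eta$ small, $\ell(P)<\ell(1000Q)$, so by the nesting of the cubes also $P\subset1000Q$. The relation $\ell(P)\sim\ell(I)t$ then holds by construction, and since $P$ occurs in the sum defining $\good$ for $P_0$ we get $\alpha_\mu(100P)\le\delta$; the telescoping bound used around \eqref{eq:ns_ur_cB1a} (Lemma~5.2 of \cite{tolsa2008}) shows in addition that the minimising plane $L_P$ makes a small angle with $L_Q$ and stays within $\lesssim\delta t$ of $L_Q$ throughout $B(x,Ct)$.

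The heart of the argument is the inclusion $\spt(\mu)\cap\big(x+(\pix)^{-1}(t\cdot 5I)\big)\subset 3P$. Working in the coordinates where $x=0$ and $L_Q^x=\R^n$ and writing $\xi=u+w$ with $u=\Pi_{\R^n}(\xi)$, $w\perp\R^n$, a direct computation from the definition of the circular projection gives $\pix(\xi)=-\tfrac{|\xi|}{|u|}\,u$; hence $\pix(\xi)\in t\cdot 5I$ forces $|\xi|=t(1+O(\ell(I)))$ and the direction $u/|u|$ to lie within $O(\ell(I))$ of a fixed direction. On the other hand, $\xi\in\spt(\mu)$ together with the flatness of $P$ and the global bound $\alpha_\mu(1000Q)\le\delta^2$ confines $\xi$ to a thin neighbourhood of a plane nearly parallel to $\R^n$; feeding this into the radial identity $|u|^2=|\xi|^2-|w|^2$ in a short bootstrap yields $|w|\lesssim t\,\ell(I)$, so $u$ is a fixed vector of length $\sim t$ up to error $\lesssim t\,\ell(I)$. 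Thus $\spt(\mu)\cap(x+(\pix)^{-1}(t\cdot 5I))$ has diameter $\lesssim t\,\ell(I)\sim\ell(P)$; it is nonempty because $I\in\dN\dG(x,t)$, and it meets $3P$ (the point of $\spt(\mu)$ near $P_0$ lies in it, since $\pix$ is essentially an isometry on the relevant $O(t\,\ell(I))$-neighbourhood of $\R^n$ and maps that point into $t\cdot I$), so it is contained in $3P$ provided the proportionality constant in $\ell(P)\sim\ell(I)t$ is taken large enough.

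I expect this last step to be the main obstacle. The circular preimage $(\pix)^{-1}(t\cdot 5I)$ is thin (of width $\sim t\,\ell(I)$) only in the directions of $L_Q^x$; in the orthogonal directions it is a ``balloon'' of diameter $\sim t$. What collapses the intersection with $\spt(\mu)$ is the control of the perpendicular spread of $\spt(\mu)$ over the relevant region \emph{down to the fine scale} $t\,\ell(I)$, and that is available only because $I\in\tree(x,t)$, i.e. $I\notin\dF$: we have not yet stopped, so the flat good region reaches scale $\lesssim t\,\ell(I)$ arbitrarily close to the region in question. The bare hypothesis $\alpha_\mu(1000Q)\le\delta^2$ would only give a perpendicular spread of order $\delta t$, which is far too coarse for the claimed conclusion; extracting the finer $t\,\ell(I)$-control from the stopping-time function $D$, and making the bootstrap between the perpendicular spread and the scale at which flatness is applied rigorous, is the technical core (and is where the proof parallels the corresponding lemma in \cite{tolsa2014}).
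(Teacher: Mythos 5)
Your proposal reconstructs the argument of Lemma 5.7 in \cite{tolsa2014}, which is precisely what the paper cites as its proof (with only the cosmetic change that one intersects $5I$ with $\partial B_n(0,1)$ alone rather than with $\partial B_n(0,1)\cup\partial B_n(0,2)$). The scaffolding you set up is the right one: from $I\in\tree(x,t)$ one deduces $I\notin\dF$, unwinds the definitions of $D$ and $d$ to produce a cube $P_0\in\good$ at scale $\lesssim t\,\ell(I)$ whose $\pisp$-projection lands near $t\cdot I$, chooses $P$ as a suitable ancestor of $P_0$ with $\ell(P)\sim\ell(I)t$, and then uses the flatness built into $\good$ to collapse $\spt(\mu)\cap\big(x+(\pix)^{-1}(t\cdot5I)\big)$ to a ball of radius $\lesssim t\,\ell(I)$. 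You have also correctly diagnosed where the substance lies: the crude perpendicular control of order $\delta t$ coming from $\alpha_\mu(1000Q)\le\delta^2$ is insufficient, and one must extract fine $t\,\ell(I)$-scale control from the Lipschitz stopping function $D$ — this bootstrap, together with verifying that the circular projection is bi-Lipschitz on the relevant region near $L_Q^x$, is exactly the content of the cited proof. As written, your sketch does not carry out that bootstrap (and you acknowledge as much), so it is an outline rather than a proof, but the approach and the identification of the technical core match the reference the paper relies on.
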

\begin{proof}
See Lemma 5.7 in \cite{tolsa2014} and the proof of it. Notice that the hypothesis
\begin{align*}
    5I \cap (\partial B_n(0,1) \cup \partial B_n(0,2)) \neq \emptyset
\end{align*}
is substituted with 
\begin{align*}
    5I \cap \partial B_n(0,1) \neq \emptyset.
\end{align*}
\end{proof}

\begin{lemma}
Let $I \in \tree(x,t)$ and let $P:=P(I)$ be a $\mu$-cube satisfying the conclusions of Lemma \ref{lemma:existenceP}. Then
\begin{align}
& \left| \int \wavi\left(\frac{\pisp(\rot{x}{Q}(y-x))}{t} \right) \, d\mu(y) \right| 
 \lesssim 
\ps{\frac{\ell(Q)}{\ell(P)}}^{n/2} \left( \sum_{\substack{S \in \mucubes \\ P \subset S \subset Q}} \alpha(S) + \frac{\dist(x, L_Q)}{\ell(Q)}\right) \ell(P)^n. \label{eq:06_51}
\end{align}
\end{lemma}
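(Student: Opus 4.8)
The plan is to mirror the $\bigcubes$ case but now exploit the stopping-time structure defining $\tree(x,t)$, which is what makes the estimate work for small cubes. Fix $I \in \tree(x,t)$ and take $P = P(I)$ from Lemma~\ref{lemma:existenceP}, so $\ell(P) \sim t\,\ell(I)$ and $\spt(\mu) \cap (x + (\pix)^{-1}(t\cdot 5I)) \subset 3P$. As before, introduce a smooth cutoff $\chi_P$ with $\chara_{3P} \leq \chi_P \leq \chara_{B_P}$ and $\|\nabla\chi_P\|_\infty \lesssim 1/\ell(P)$. Since $\wavi$ has zero mean, and writing $L_Q^0, c_Q$ for the minimizing plane and constant of $\alpha_\mu(Q)$ (rather than those of $P$, because $P \subset 1000Q$ is small and $Q$ is the reference cube here), decompose
\begin{align*}
\left| \int \wavi\ps{\quotr} \, d\mu(y)\right|
& \leq \left| \int \chi_P(y)\left(\wavi\ps{\quotr} - \wavi\ps{\quotpor}\right) d\mu(y)\right| \\
& \quad + \left| \int \chi_P(y)\, \wavi\ps{\quotpor}\, d\left(\mu - c_Q \hn_{L_Q^0}\right)(y)\right| \\
& \quad + \left| \int \chi_P(y)\, \wavi\ps{\quotpor}\, c_Q\, d\hn_{L_Q^0}(y)\right| =: III_1 + III_2 + III_3,
\end{align*}
and note $III_3 = 0$ by the zero-mean property of $\wavi$ (since on $L_Q^0$ the composition $\pisp \circ R_{L_Q^x}$ acts affinely).

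For $III_1$, use $|\grad \wavi| \lesssim \ell(I)^{-1-n/2}$ together with $|\pixo(y) - \poxo(y)| \lesssim \dist(y, L_Q^0)$ (as in \eqref{eq:diffpixopoxo}) to reduce to $\int_{B_P} \dist(y, L_Q^0)\, d\mu(y)$. The key point is that, because $I \in \tree(x,t)$ and $P$ is not cut off by a cube of $\reg$, the cube $P$ lies in a region where $D$ is still large compared to $t\ell(I) \sim \ell(P)$; this forces $P$ into a ``good'' tree — concretely, for the ancestors $S$ with $P \subset S \subset 1000Q$ one still controls $\sum_S \alpha_\mu(100S) \lesssim \delta$, and hence, via Lemma~5.2 of \cite{tolsa2008}, $\dist_H(B_P \cap L_P, B_P \cap L_Q^0) \lesssim \sum_{P \subset S \subset Q}\alpha_\mu(S)\,\ell(S) + \dist(x, L_Q)$ after translating to contain $x$. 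Combining, $\int_{B_P}\dist(y,L_Q^0)\,d\mu(y) \lesssim \ell(P)^{n+1}\big(\sum_{P\subset S\subset Q}\alpha_\mu(S) + \tfrac{\dist(x,L_Q)}{\ell(Q)}\big)$, and multiplying by $\|\grad \wavi\|_\infty / t$ and using $\ell(P) \sim t\ell(I)$ gives the gain $(\ell(Q)/\ell(P))^{n/2}$ exactly as in the estimate for $II_1$ in the big-cube case. The term $III_2$ is handled identically: bound $\|\grad(\chi_P \wavi(\poxo(\cdot)/t))\|_\infty \lesssim \ell(Q)^{-1}(\ell(Q)/\ell(P))^{n/2+1}$, multiply by $\alpha_\mu(P)\ell(P)^{n+1}$ (and also add and subtract $c_Q\hn_{L_P}$ to absorb the plane-versus-plane discrepancy, contributing another $\dist_H$ term bounded as above), and collect.

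The main obstacle — and the reason this case is genuinely different from the big-cube case — is justifying that $P(I)$ behaves like a ``good'' cube for $I \in \tree(x,t)$: one must extract from the definition $I \in \dF^c$ (i.e. $I$ not dominated by any $\reg$ cube) and from the construction of $D$ via $d(y) = \inf_{P'\in\good}(\ell(P') + \dist(y,P'))$ that the $\mu$-cube $P$ is close to some actually-$\good$ cube at a comparable scale, so that the $\alpha$-sum over ancestors between $P$ and $1000Q$ is controlled by $\delta$ and the telescoping Hausdorff-distance estimate \eqref{eq:ns_ur_cB1a} applies. This is precisely the content of the stopping-time machinery in Subsection~5.3 of \cite{tolsa2014}, and the bookkeeping (relating $\ell(P)$, $t\ell(I)$, and $D$ on $t\cdot I$, and checking $P \subset 1000Q$) is where care is needed; once that is in place, the rest is a routine repetition of the $\bigcubes$ estimates with $Q$ in the role previously played by $P$. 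We therefore follow \cite{tolsa2014}, Lemma~5.8, \emph{mutatis mutandis}.
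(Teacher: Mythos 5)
Your decomposition differs from the paper's in a way that introduces a genuine gap. The paper does \emph{not} compare $\mu$ directly to $c_Q\,\hn|_{L_Q^0}$. Instead, it introduces a point $q_P\in B_P\cap\spt(\mu)$ with $\dist(q_P,L_P)\lesssim\alpha(P)\ell(P)$, and an \emph{intermediate plane} $\tilde L_P$ parallel to $L_Q^0$ through $q_P$, and splits $\mu\to\sigma_P=c_P\hn|_{L_P}\to\tilde\sigma_P=c_P\hn|_{\tilde L_P}$ (terms $A_1,A_2$), keeping the circular projection $\pis$ throughout. Only at the final step ($A_3$) is $\tilde\sigma_P$ pushed forward by $\pis$ and compared, with a \emph{new} constant $\tilde c$, to a uniform measure on $L_Q^0$ via a dedicated sublemma.

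This machinery is not cosmetic; it is what produces the right scales in \eqref{eq:06_51}. In your $III_1$ and $III_2$ you reduce matters to $\int_{B_P}\dist(y,L_Q^0)\,d\mu(y)$, equivalently to $\dist_H(L_P\cap B_P, L_Q^0\cap B_P)$. By \eqref{eq:ns_ur_cB1a} and the translation to $L_Q^0$, this Hausdorff distance is of order
\[
\sum_{P\subset S\subset Q}\alpha(S)\,\ell(S)\;+\;\dist(x,L_Q),
\]
which, after dividing by $\ell(P)$ as the Lipschitz testing against a wavelet at scale $\sim\ell(P)$ forces you to do, gives
\[
\sum_{P\subset S\subset Q}\alpha(S)\,\frac{\ell(S)}{\ell(P)}\;+\;\frac{\dist(x,L_Q)}{\ell(P)},
\]
which in the small-cube regime $\ell(P)\ll\ell(Q)$ is far larger than the required $\sum_{P\subset S\subset Q}\alpha(S)+\dist(x,L_Q)/\ell(Q)$. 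In the big-cube case this step is harmless because there $P\supset Q$ and $\ell(S)/\ell(P)\le 1$, but precisely the point of the $\tree$/$\Stop$ splitting is that here $P\subset Q$. Your line ``Combining, $\int_{B_P}\dist(y,L_Q^0)\,d\mu(y)\lesssim\ell(P)^{n+1}(\sum\alpha(S)+\dist(x,L_Q)/\ell(Q))$'' is exactly where the argument breaks; the factors $\ell(S)/\ell(P)$ and $\ell(Q)/\ell(P)$ have been silently dropped.

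The paper avoids this by never measuring the offset of $L_P$ from $L_Q^0$ inside $B_P$. Comparing $L_P$ with $\tilde L_P$ instead, the offset is only $\dist(q_P,L_P)\lesssim\alpha(P)\ell(P)$, so $\dist_H(L_P\cap B_P,\tilde L_P\cap B_P)/\ell(P)\lesssim\sum_{P\subset S\subset Q}\alpha(S)$. The genuinely delicate piece is then the $\tilde\sigma_P$-versus-uniform comparison ($A_3$), where pushing forward by $\pis$ puts both measures on $\R^n$ and the sublemma gives $\dist_B(\pis[\hn|_{\tilde L_P}],\tilde c\,\hn|_{L_Q^0})\lesssim\big(\sum\alpha(S)+\dist(x,L_Q)/\ell(Q)\big)\ell(P)^{n+1}$ with the correct $\ell(Q)^{-1}$ normalization on $\dist(x,L_Q)$. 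Your reliance on $III_3=0$ instead shifts all the geometry into $III_1,III_2$, where the wrong scale appears; to repair the proof you would need to reinstate $q_P$, $\tilde L_P$, and the $\pis$-pushforward argument, at which point you are reproducing the paper's (and \cite{tolsa2014}'s Lemma 5.8) decomposition.

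A secondary issue: your claim that $III_3=0$ uses that $\chi_P\equiv 1$ on $(\poxo)^{-1}(t\cdot 5I)\cap L_Q^0$, whereas Lemma~\ref{lemma:existenceP} only guarantees the containment for $\spt(\mu)\cap(\pix)^{-1}(t\cdot 5I)$. This can be fixed by enlarging the cutoff slightly (since $\alpha(1000Q)\le\delta^2$ keeps $L_Q^0$ close to $\spt(\mu)$), but it should be noted; it is another reason the paper works with $\tilde L_P$, which is by construction anchored to a point of $\spt(\mu)\cap B_P$.
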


\begin{proof}
The proof of Lemma 5.8 in \cite{tolsa2014} can easily be adapted to our current situation; we include it for the sake of completeness. The idea, as in previous lemmata, is to bound the wavelet with the sum of angles between $n$-planes, or $\alpha_\mu$ numbers, which are under control because the measure $\mu$ is uniformly rectifiable. 

Without loss of generality we may take $x=0$ and assume that $L_Q$ is parallel to $\R^n$. Denoting by $L_P$ the plane which minimises $\alpha_\mu(P)$,  let $q_P \in B_P \cap \spt(\mu)$ be so that 
\begin{align} \label{eq:06_50}
    \dist(q_P, L_P) \lesssim \alpha(P) \ell(P).
\end{align}
Such a point exists: for some constant $c^*>0$, set 
\begin{align*}
    & A(c*) =A  \\
    & := \left\{ y \in B_P  \cap \spt(\mu) \, |\, \dist(y, L_P)\ell(P)^{-1} < c^* \ell(P)^{-n} \int_{B(z_P, 2 \ell(P))}  \frac{\dist(y, L_P)}{\ell(P)} \, d\mu(y) \, \right\}.
\end{align*}
Then, by Chebyshev inequality, we see that
\begin{align*}
    \mu(B_P \setminus A) & \leq \frac{1}{c^* \, \beta_{\mu, 1}(P) } \int_{B_P} \frac{\dist(y, L_P)}{\ell(P)}\, d\mu(y) \\
    & \leq \frac{\ell(P)}{c^*}.
 \end{align*}
 Choosing $c^*$ large enough (not depending on $P$), we see that $A \neq \emptyset$. Finally, recalling \eqref{eq:02_50}, we verify the existence of a point $y_P \in \spt(\mu) \cap B_P$ so that \eqref{eq:06_50} holds.
 
Recall that $\qpo$ is the plane parallel to $L_Q$ but containing $0$. Now, denote by $\tilde L_P$ the plane parallel to $\qpo$ (and thus to $\R^n$) which moreover contains $q_P$. As before, let $\chi_P$ be a smooth bump function with 
\begin{align} \label{eq:06_53}
    \chara_{B_P} \leq \chi_P \leq \chara_{3B_P} \enskip \mbox{ and }\enskip  \|\nabla \chi_P \|_\infty \lesssim \frac{1}{\ell(P)}.
\end{align}
Since $\alpha(P)$ is assumed to be very small, we have that 
\begin{align}
    \left(\pis\right)^{-1} (5I \cdot t) \cap \tilde L_P \subset B_P. \label{eq:06_56}
\end{align}
Recall that $\pis= \pis_\{\R^n\}$.
Set
\begin{align*}
    & \sigma_P := c_P \hn|_{L_P} \mbox{ and } \\
    & \tilde \sigma_P := c_P \hn|_{\tilde L_P}.
\end{align*}
We split the integral on the left hand side of \eqref{eq:06_51} so to compare the measure $\mu$ to $\sigma_P$, and then $\sigma_P$ to $\tilde \sigma_P$.
\begin{align*}
    \int \wavi \left( \frac{\pis(y)}{t}\right) \, d\mu(y) & = \int \chi_P(y) \wavi \left( \frac{\pis(y)}{t}\right) \, d\mu(y) \\
    & = \int \chi_P(y) \, \wavi\ps{\frac{\pis(y)}{t}} \, d \left( \mu - \sigma_P\right)(y) \, \\
    & \enskip \enskip + \int \chi_P(y) \, \wavi \ps{\frac{\pis(y)}{t}} \, d \ps{\sigma_P -\tilde \sigma}(y)\, \\
    & \enskip \enskip + \int \chi_P(y) \, \wavi\ps{\frac{\pis(y)}{t}} \, d \tilde \sigma_P(y) \\
    & =: A_1 +A_2+ A_3.
\end{align*}
\begin{enumerate}
    \item \textit{Estimates for $A_1$.} First, notice that since $I$ has small size and it intersects the boundary of the unit ball on the plane, and moreover, $Q$ lies close to $\qpo$ (that is, $\alpha_\mu(1000Q) \leq \delta$), then $B_P$ is far from $(\pis)^{-1}(\{0\})$. Thus we have 
    \begin{align}
        \|\nabla \pis\|_\infty \lesssim 1 \mbox{ on } B_P.
    \end{align}
    Moreover, recall that $\ell(P) \sim \ell(I) \, t \sim \ell(I) \, \ell(Q)$. Hence this together with the properties of the Debauchies wavelets give us
    \begin{align}
        |A_1 |& \lesssim \left\| \nabla  \left(\chi_P(y) \, \wavi\ps{\frac{\pis(y)}{t}}\right) \right\|_\infty \, \alpha(P) \ell(P)^{n+1}\nonumber \\
        & \lesssimt{\eqref{eq:06_53}, \eqref{eq:02_waviInf}, \eqref{eq:02_waviDifInf}} \left( \frac{1}{\ell(P) \, \ell(I)^{n/2}}+ \frac{1}{\ell(I)^{n/2 +1}} \frac{1}{t}\right) \, \alpha(P) \ell(P)^{n+1} \nonumber \\
        & \sim \ps{\frac{\ell(Q)}{\ell(P)}}^{\frac{n}{2}} \alpha(P) \ell(P)^n. \label{eq:06_55}
    \end{align}
    \item \textit{Estimated for $A_2$.} As above, we write
    \begin{align}
        |A_2| & \lesssim \left( \frac{1}{\ell(P) \, \ell(I)^{n/2}}+ \frac{1}{\ell(I)^{n/2 +1}} \frac{1}{t}\right) \, \dist_{B_P}(\sigma_P, \tilde \sigma_P) \nonumber\\
        & \lesssim \left( \frac{1}{\ell(P) \, \ell(I)^{n/2}}+ \frac{1}{\ell(I)^{n/2 +1}} \frac{1}{t}\right) \, \ell(P)^n \, \dist_H(L_P \cap B_P, \tilde L_P \cap B_P). \label{eq:06_54}
    \end{align}
    Now, from \cite{tolsa2014}, Lemma 5.2, and using here our choice of $q_P$, i.e. \eqref{eq:06_50}, we see that 
    \begin{align*}
        \frac{1}{\ell(P)} \, \dist_H(L_P \cap B_P, \tilde L_P \cap B_P) \lesssim \sum_{P \subset S \subset Q} \alpha(S).
    \end{align*} 
    Hence, together  with \eqref{eq:06_54} and as in \eqref{eq:06_55}, we obtain
    \begin{align}
        |A_2| \lesssim \ps{\frac{\ell(Q)}{\ell(P)}}^{\frac{n}{2}} \alpha(P) \ell(P)^n.
    \end{align}
    \item \textit{Estimates for $A_3$.} Let $B$ be a ball centered on $\qpo$ containing the support of $\wavi\ps{\frac{\cdot}{t}}$ and with $r(B)\lesssim \ell(P)$. Let $0 < \tilde c$ be a constant which will be chosen below. Recall that $\tilde L_P$ is the plane parallel to $L_Q$ but containing $q_P$. We want to compare $\tilde \sigma$ to the $n$-dimensional Hausdorff measure restricted to $\qpo$: we further split the integral $A_3$ in the following way.
    \begin{align*}
        |A_3| & \eqt{\eqref{eq:06_56}} \left| \int \wavi \ps{\frac{\pis(y)}{t}} \, d\tilde \sigma_P(y) \, \right| \\
        & = \left| \int \wavi \ps{\frac{y}{t}} \, d \pis [\tilde \sigma_P](y) \, \right|\\
        & \leq \left| \int \wavi \ps{\frac{y}{t}} \, d \pis [\tilde \sigma_P](y) - \tilde c \, c_P \, \int \wavi \ps{\frac{y}{t}}\, d \hn|_{\qpo}(y)\right| \\
        & \enskip \enskip + \tilde c\, c_P \left| \int \wavi \ps{\frac{y}{t}} \, d\hn|_{\qpo}(y)\right|.
    \end{align*}
    Notice that since $0 \in \qpo$, the second term on the right hand side equals to $0$. The first term can be bounded as in \eqref{eq:06_54} (recalling that $c_P \lesssim 1$):
    \begin{align*}
        |A_3| \lesssim \frac{1}{\ell(I)^{n/2+1} \, t} \, \dist_B (\pis[\hn|_{\tilde L_P}], \tilde c \, \hn|_{\qpo}).
    \end{align*}
    We need to bound $\dist_B (\pis[\hn|_{\tilde L_P}], \tilde c \, \hn|_{\qpo})$. 
    
     \begin{sublemma}
     With the notation as above, we have
     \begin{align*}
        & \dist_B (\pis[\hn|_{\tilde L_P}], \tilde c \, \hn|_{\qpo}) 
         \lesssim \left( \sum_{P \subset S \subset Q} \alpha(S) + \frac{\dist(0, L_Q)}{\ell(Q)} \right) \ell(P)^{n+1}.
    \end{align*}
    \end{sublemma}
    \begin{proof}[Proof of sublemma]
    This is done in \cite{tolsa2014}, see the proof of Lemma 5.8, the paragraph below it and Lemma 5.9. 
    \end{proof}
    
    This and the previous estimates for $A_1$ and $A_2$ give the desired result. 
\end{enumerate}
\end{proof}

Set now
\begin{align*}
    \uptree(x, t) := \left\{ P \in \mucubes \,|\, P=P(I) \mbox{ for } I \in \tree(x,t) \right\}.
\end{align*}

\begin{lemma}
Keep the notation as above. Then
\begin{align*}
    & \left| \sum_{I \in \tree(x,t)} \frac{a_I}{t^n} \int \wavi \left( \frac{\pisp\left(R_{L_Q^x}(y-x)\right)}{t} \right) \, d\mu(y)\right| \\
    & \lesssim \sum_{P \in  \uptree(x,t)} \left( \alpha (aP)  + \frac{\dist(x, L_Q)}{\ell(Q)}\right) \frac{\mu(P)}{\mu(Q)},
\end{align*}
for some absolute constant $a \geq 1$.
\end{lemma}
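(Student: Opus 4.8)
The plan is to sum the single‑cube estimate \eqref{eq:06_51} over all $I \in \tree(x,t)$, using the bound on the wavelet coefficients $a_I$ from Lemma \ref{lemma:wavcoeff} together with the scale relations $\ell(P(I)) \sim \ell(I)\,t$ and $t \sim \ell(Q)$, and then to reorganise the resulting sum over $P \in \uptree(x,t)$.

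First I would fix $I \in \tree(x,t)$ and take the cube $P = P(I) \in \mucubes$ provided by Lemma \ref{lemma:existenceP}. Since every $I \in \smallcubes(x,t)$ satisfies $\ell(I) < \eta < 1$, Lemma \ref{lemma:wavcoeff}(2) gives $|a_I| \lesssim \ell(I)^{n/2}$. Combining this with \eqref{eq:06_51}, using $t \in [\ell(Q), 2\ell(Q)]$, $\ell(P) \sim \ell(I)\ell(Q)$, and the $n$-AD-regularity estimates $\mu(P)\sim\ell(P)^n$, $\mu(Q)\sim\ell(Q)^n$, the powers of $\ell(I)$, $\ell(P)$, $\ell(Q)$ and $t$ collapse to
\begin{align*}
\frac{|a_I|}{t^n}\left|\int\wavi\!\left(\frac{\pisp\big(R_{L_Q^x}(y-x)\big)}{t}\right)d\mu(y)\right| \;\lesssim\; \frac{\mu(P(I))}{\mu(Q)}\left(\sum_{\substack{S\in\mucubes\\ P(I)\subset S\subset Q}}\alpha_\mu(S) \;+\; \frac{\dist(x,L_Q)}{\ell(Q)}\right).
\end{align*}
Applying the triangle inequality to the sum over $I \in \tree(x,t)$ and invoking the bounded multiplicity of the assignment $I \mapsto P(I)$ (each $P \in \mucubes$ is $P(I)$ for at most a dimensional number of cubes $I$, as in the construction behind Lemma \ref{lemma:existenceP}), I would reduce matters to bounding $\sum_{P\in\uptree(x,t)}\frac{\mu(P)}{\mu(Q)}\big(\sum_{P\subset S\subset Q}\alpha_\mu(S) + \dist(x,L_Q)/\ell(Q)\big)$. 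The contribution of the second term distributes immediately to $\sum_{P\in\uptree(x,t)}\frac{\dist(x,L_Q)}{\ell(Q)}\frac{\mu(P)}{\mu(Q)}$, which is one of the two terms on the right-hand side.

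The remaining term $\sum_{P\in\uptree(x,t)}\frac{\mu(P)}{\mu(Q)}\sum_{P\subset S\subset Q}\alpha_\mu(S)$ is where the stopping-time construction must be used, and this is the main obstacle. Since $I \in \tree(x,t)$ is not contained in any cube of $\reg(x,t)$, the definition of $\dF$ and of the Lipschitz functions $d$, $D$ produces a cube $P_0 \in \good$ with $\ell(P_0) \lesssim \ell(P(I))$ lying within distance $\lesssim \ell(P(I))$ of $3P(I)$, hence $P_0 \subset a_0 P(I)$ for a dimensional $a_0$. The $\good$-condition $\sum_{P_0\subset S'\subset 1000Q}\alpha_\mu(100S')\leq\delta$, together with the comparability $\alpha_\mu(S)\lesssim\alpha_\mu(100\hat S)$ for a neighbour $\hat S$ of $S$ of the same generation containing $P_0$, controls the part of the sum with $\ell(S)$ large compared with $\ell(P(I))$; the boundedly many remaining cubes $S\supset P(I)$ with $\ell(S)\sim\ell(P(I))$ each satisfy $\alpha_\mu(S)\lesssim\alpha_\mu(aP(I))$ by the comparability of $\alpha$-numbers of nested cubes of comparable scale. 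This is the point where the argument follows \cite{tolsa2014} essentially word for word, since the quantities being estimated no longer involve the centre-of-mass kernel but only the wavelet decomposition and the geometry of $\spt\mu$. Summing over $P\in\uptree(x,t)$, and using the crude bound $\sum_{P\in\uptree(x,t)}\mu(P)\lesssim\mu(Q)$ — a convergent geometric series in scales, since at scale $2^{-j}$ there are $\lesssim 2^{j(n-1)}$ non-negligible cubes, each contributing $\mu$-mass $\sim(2^{-j}t)^n$ — to absorb lower-order terms, then yields the stated estimate.
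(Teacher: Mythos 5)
The overall route — sum the per-cube estimate \eqref{eq:06_51} with the wavelet coefficient bound $|a_I|\lesssim\ell(I)^{n/2}$, use $\ell(P(I))\sim\ell(I)t\sim\ell(I)\ell(Q)$ to collapse the powers into $\mu(P(I))/\mu(Q)$, and then reorganise over $\uptree(x,t)$ — is the right one, and your arithmetic for the single-$I$ reduction is correct. But the step you flag as ``the main obstacle'' is where the argument actually breaks. You claim that, for a fixed $P=P(I)$, the inner sum $\sum_{P\subset S\subset Q}\alpha_\mu(S)$ is controlled by $\alpha_\mu(aP)$ because the $\good$-condition bounds the tail by $\delta$ and the remaining $O(1)$ comparable-scale cubes by $\alpha_\mu(aP)$. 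What this actually yields is
\begin{align*}
\sum_{P\subset S\subset Q}\alpha_\mu(S)\;\lesssim\;\alpha_\mu(aP)\;+\;\delta,
\end{align*}
and after multiplying by $\mu(P)/\mu(Q)$ and summing over $P\in\uptree(x,t)$, the extra $\delta$ produces a term of size $\sim\delta$ which does \emph{not} appear on the right-hand side of the lemma; it cannot be absorbed because the stated right-hand side vanishes when all the $\alpha$-numbers and $\dist(x,L_Q)$ vanish, while your bound would not. So as written the claimed inequality on the inner sum is false in general and your proof does not close.

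The correct mechanism (following Lemmas 5.10--5.11 of \cite{tolsa2014}) is not a per-$I$ bound on the chain $\sum_{P\subset S\subset Q}\alpha_\mu(S)$; it is a Fubini/reindexing of the double sum in $(I,S)$. The key structural fact you are missing is that if $I\in\tree(x,t)$ then every ancestor $J\supset I$ lying in $\smallcubes(x,t)$ is also in $\tree(x,t)$ (containment in a $\reg$-cube passes down to subcubes, so its negation passes up), and the intermediate dyadic cubes $S$ with $P(I)\subset S\subset Q$ are, up to bounded dilation, the cubes $P(J)\in\uptree(x,t)$ for those ancestors $J$. Exchanging the order of summation and using the geometric decay $\sum_{I\in\tree,\,I\subset J}\ell(I)^n\lesssim\ell(J)^n$ (your own $2^{j(n-1)}$-near-the-boundary count, applied inside $J$) gives
\begin{align*}
\sum_{I\in\tree(x,t)}\ell(I)^n\sum_{P(I)\subset S\subset Q}\alpha_\mu(S)
\;\lesssim\;\sum_{J\in\tree(x,t)}\alpha_\mu(aP(J))\sum_{\substack{I\in\tree(x,t)\\ I\subset J}}\ell(I)^n
\;\lesssim\;\sum_{P\in\uptree(x,t)}\alpha_\mu(aP)\,\frac{\mu(P)}{\mu(Q)},
\end{align*}
which is the desired right-hand side. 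The finitely many scales between $\eta\ell(Q)$ and $\ell(Q)$ (where ancestors of $I$ leave $\smallcubes$) must still be handled, but they are absorbed by the bounded-overlap structure rather than by the $\good$-bound $\delta$. In short: you need Fubini over the tree, not a per-branch chain estimate.
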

\begin{proof}
Again, one can easily adapt the proof of Lemmas 5.10 and 5.11 in \cite{tolsa2014} to the current situation.
\end{proof}

\begin{lemma} \label{lemma:ns_ur_cSNG1}
With notation as above, we have
\begin{align*}
& \sum_{\substack{Q \in \mucubes(R)\\ \alpha_\mu(1000Q)\leq \delta^2}} \int_Q \int_{\ell(Q)}^{2\ell(Q)} 
\left( \left| \sum_{I \in \tree(x,t)} \frac{a_I}{t^n} \int \wavi \left( \frac{\pisp\left(R_{L_Q^x}(y-x)\right)}{t} \right) \, d\mu(y)\right|^2 \right)\, \dt \, d\mu(x) \\
& \lesssim \mu(R).
\end{align*}
\end{lemma}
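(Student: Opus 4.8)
The plan is to substitute the pointwise estimate of the preceding lemma into the double integral, reorganise the sum over $P\in\uptree(x,t)$ by scale so as to extract a geometric gain, and then conclude with the two Carleson packing estimates already at our disposal: Lemma \ref{lemma:tolsa2008} for the term carrying $\dist(x,L_Q)$ and Theorem \ref{theorem:tolsaalpha} for the term carrying the $\alpha$-numbers. This is exactly the shape of the corresponding step in \cite{tolsa2014}, and the only input that is not purely mechanical is the scale decay of $\uptree(x,t)$, which I isolate first.

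For $j\in\N$ put $\uptree_j(x,t):=\{P(I):I\in\tree(x,t),\ \ell(I)=2^{-j}\}$, so that by Lemma \ref{lemma:existenceP} and $t\in[\ell(Q),2\ell(Q)]$ one has $\ell(P)\sim 2^{-j}\ell(Q)$ and $\mu(P)\sim 2^{-jn}\mu(Q)$ for $P\in\uptree_j(x,t)$. Since every $I\in\tree(x,t)\subset\smallcubes(x,t)$ satisfies $5I\cap\partial\B\neq\emptyset$, at the fixed scale $2^{-j}$ only $\lesssim 2^{j(n-1)}$ such cubes $I$ occur (the unit sphere of $\R^n$ being $(n-1)$-dimensional), hence $\#\uptree_j(x,t)\lesssim 2^{j(n-1)}$ and
\begin{align*}
\sum_{P\in\uptree_j(x,t)}\frac{\mu(P)}{\mu(Q)}\ \lesssim\ 2^{j(n-1)}\,2^{-jn}\ =\ 2^{-j},
\end{align*}
and moreover $\ell(I)<\eta$ forces $j>\log_2(1/\eta)$. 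Feeding this into the bound of the preceding lemma, applying Cauchy--Schwarz once inside each scale $j$ and once in the outer sum over $j$, the pointwise estimate upgrades to
\begin{align*}
\Bigl|\sum_{I\in\tree(x,t)}\frac{a_I}{t^n}\int\wavi\Bigl(\tfrac{\pisp(R_{L_Q^x}(y-x))}{t}\Bigr)d\mu(y)\Bigr|^2\ \lesssim\ \sum_{P\in\uptree(x,t)}\Bigl(\tfrac{\ell(P)}{\ell(Q)}\Bigr)^{1/2}\Bigl(\alpha(aP)^2+\tfrac{\dist(x,L_Q)^2}{\ell(Q)^2}\Bigr)\frac{\mu(P)}{\mu(Q)};
\end{align*}
the extra factor $(\ell(P)/\ell(Q))^{1/2}$ is the gain that drives everything afterwards.

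Next I would integrate over $(x,t)\in Q\times[\ell(Q),2\ell(Q)]$ and sum over $Q\in\mucubes(R)$ with $\alpha_\mu(1000Q)\le\delta^2$, splitting into the $\dist$-part and the $\alpha$-part. In the $\dist$-part the factor $\dist(x,L_Q)^2/\ell(Q)^2$ leaves the $P$-sum, and since $\sum_{P\in\uptree(x,t)}(\ell(P)/\ell(Q))^{1/2}\mu(P)\lesssim\mu(Q)$ by the scale decay above, that part is controlled by $\sum_{Q\in\mucubes(R)}\int_Q(\dist(x,L_Q)/\ell(Q))^2\,d\mu(x)$, which is $\lesssim\mu(R)$ by Lemma \ref{lemma:tolsa2008}. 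For the $\alpha$-part, I would interchange the $P$-sum with the integral in $(x,t)$; since $\int_Q\int_{\ell(Q)}^{2\ell(Q)}\chara_{\{P\in\uptree(x,t)\}}\,\dt\,d\mu(x)\le(\ln2)\,\mu(Q)$ and any cube $P$ arising for such $(x,t)$ satisfies $P\subset 1000Q$, this produces the bound
\begin{align*}
\sum_{Q\in\mucubes(R)}\ \sum_{\substack{P\in\mucubes\\ P\subset 1000Q}}\Bigl(\tfrac{\ell(P)}{\ell(Q)}\Bigr)^{1/2}\alpha(aP)^2\,\mu(P).
\end{align*}

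Finally, swapping the order of summation: for a fixed $P$ the cubes $Q\in\mucubes(R)$ with $P\subset 1000Q$ and $\ell(Q)\sim 2^m\ell(P)$ are $O(1)$ in number for each $m\ge 0$, each contributing the weight $2^{-m/2}$, so $\sum_{Q:\,P\subset 1000Q}(\ell(P)/\ell(Q))^{1/2}\lesssim\sum_{m\ge0}2^{-m/2}\lesssim 1$. Hence the $\alpha$-part is $\lesssim\sum_{P\in\mucubes,\,P\subset CR}\alpha(aP)^2\mu(P)$, which is $\lesssim\mu(R)$ by Theorem \ref{theorem:tolsaalpha} (after the routine remark that dilating $P$ by the fixed factor $a$ alters the packing sum only by a constant, via bounded overlap). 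I expect the genuinely essential point to be the scale decay $\sum_{P\in\uptree_j(x,t)}\mu(P)\lesssim 2^{-j}\mu(Q)$: without it the naive estimate of the $\alpha$-part diverges logarithmically in $\ell(R)/\ell(P)$, while everything else is Cauchy--Schwarz, Fubini and the two quoted packing theorems, exactly as in \cite{tolsa2014}.
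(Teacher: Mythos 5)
Your proof is correct and fleshes out exactly the argument the paper invokes (it just cites Lemma 5.12 of \cite{tolsa2014} without details): the count $\#\{I\in\tree(x,t):\ell(I)=2^{-j}\}\lesssim 2^{j(n-1)}$ from $5I\cap\partial\B\neq\emptyset$ gives $\sum_{P\in\uptree(x,t)}(\ell(Q)/\ell(P))^{1/2}\mu(P)/\mu(Q)\lesssim 1$, Cauchy--Schwarz with that weight converts the pointwise bound of the preceding lemma into one with the gain $(\ell(P)/\ell(Q))^{1/2}$, and the two packing estimates (Lemma \ref{lemma:tolsa2008} and Theorem \ref{theorem:tolsaalpha}, the latter upgraded to dilated cubes $aP$ by bounded overlap) close the sum after Fubini, precisely because $\sum_{Q\supset P}(\ell(P)/\ell(Q))^{1/2}\lesssim 1$ absorbs the logarithmic divergence that the naive estimate would produce. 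This is the intended route.
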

\begin{proof}
This follows as Lemma 5.12 in \cite{tolsa2014}.
\end{proof}
This concludes the estimates for $I \in \tree(x,t)$.

\subsubsection{$B$: estimates for $I \in \Stop(x,t)$}.
\begin{lemma} \label{lemma:ns_ur_cSNG2}
Keep the notation as above. Then
\begin{align*}
& \sum_{\substack{Q \in \mucubes(R)\\ \alpha_\mu(1000Q)\leq \delta^2}} \int_Q \int_{\ell(Q)}^{2\ell(Q)} \left| \sum_{I \in \Stop(x,t)} \sum_{\substack{J \in \smallcubes(x,t)\\ J \subset I}} \frac{a_J}{t^n} \int \wavj \left( \frac{\pisp\ps{R_{L_Q^x}(y-x)}}{t} \right) \, d\mu(y) \right|^2 \, \dt \, d\mu(x)\\
& \lesssim \mu(R).
\end{align*}
\end{lemma}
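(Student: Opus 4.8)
The plan is to follow the proof of Lemma 5.12 in \cite{tolsa2014} essentially line by line. As in the two preceding subsections, the quantity to be estimated no longer involves the center-of-mass kernel directly: the kernel enters only through the functions $\gio$ and the bounds on their wavelet coefficients $a_I$ recorded in Lemma \ref{lemma:wavcoeff}, and those bounds are exactly the ones Tolsa uses for the indicator of the unit ball, so the argument transfers with only cosmetic changes. I will describe the three steps.

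First I would fix $(x,t) \in Q \times [\ell(Q), 2\ell(Q)]$ with $\alpha_\mu(1000Q) \leq \delta^2$ and normalise so that $x = 0$ and $\qpo = \qpx$ is parallel to $\R^n$. For $I \in \Stop(x,t) = \smallcubes(x,t) \cap \reg(x,t)$ and each dyadic $J \subseteq I$ with $J \in \smallcubes(x,t)$, I would use the construction of Lemma \ref{lemma:existenceP} (see also \cite{tolsa2014}, Lemma 5.7) to produce $P(J) \in \mucubes$ with $\ell(P(J)) \sim t\,\ell(J)$ and $\spt(\mu) \cap \big(x + (\pix)^{-1}(t \cdot 5J)\big) \subset 3P(J)$. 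Using that $I \in \reg(x,t)$, together with the Lipschitz estimates for the regularised distances $d$ and $D$ (\cite{tolsa2014}, Lemma 5.6), one checks that for fixed $(x,t)$ the family $\{P(I) : I \in \Stop(x,t)\}$ has bounded overlap and that $\sum_{I \in \Stop(x,t)} \mu(P(I)) \lesssim \mu(Q)$.

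Second I would prove, for each such $J$, the pointwise bound
\begin{align*}
\left| \frac{a_J}{t^n} \int \wavj\ps{\frac{\pisp\ps{R_{L_Q^x}(y-x)}}{t}}\, d\mu(y) \right| \;\lesssim\; \ell(J)^n \Big( \sum_{\substack{S \in \mucubes\\ P(J) \subset S \subset Q}} \alpha_\mu(2S) + \frac{\dist(x, L_Q)}{\ell(Q)} \Big),
\end{align*}
arguing exactly as in the proof of \eqref{eq:06_51}: insert a smooth cut-off $\chi_{P(J)}$ adapted to $P(J)$, compare $\mu$ first with $c_{P(J)} \hn|_{L_{P(J)}}$ and then with $c_{P(J)} \hn$ on the plane through a suitably chosen point parallel to $\R^n$, use $\|\wavj\|_\infty \lesssim \ell(J)^{-n/2}$, $\|\nabla\wavj\|_\infty \lesssim \ell(J)^{-1-n/2}$ and $|a_J| \lesssim \ell(J)^{n/2}$, and note that the flat main term vanishes because $\wavj$ has zero mean and $\pisp$ preserves norms, so by \eqref{eq:06_30} the pushforward of $\hn$ on the adjusted plane is a constant multiple of $\hn|_{\R^n}$ on the relevant ball. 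Summing over $J \subseteq I$ with $J \in \smallcubes(x,t)$, the point is that every such $J$ lies within distance $\lesssim \ell(J)$ of $\partial\B$, whence $\sum_{J \subseteq I} \ell(J)^n \lesssim \ell(I)^n$; after interchanging the $J$- and $S$-sums as in \cite{tolsa2014}, Lemmas 5.10--5.11, and using $\ell(P(I)) \sim t\,\ell(I) \sim \ell(Q)\ell(I)$ and $\mu(P(I)) \sim \ell(P(I))^n$, this produces
\begin{align*}
\left| \sum_{I \in \Stop(x,t)} \sum_{\substack{J \in \smallcubes(x,t)\\ J \subseteq I}} \frac{a_J}{t^n} \int \wavj\ps{\frac{\pisp\ps{R_{L_Q^x}(y-x)}}{t}}\, d\mu(y) \right| \;\lesssim\; \sum_{P} \Big( \sum_{\substack{S \in \mucubes\\ P \subset S \subset Q}} \alpha_\mu(aS) + \frac{\dist(x, L_Q)}{\ell(Q)} \Big) \frac{\mu(P)}{\mu(Q)},
\end{align*}
with $P$ ranging over $\{P(I) : I \in \Stop(x,t)\}$ and $a \geq 1$ an absolute constant --- i.e. the exact analogue of the estimate preceding Lemma \ref{lemma:ns_ur_cSNG1}.

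Finally I would square the last display, integrate in $(x,t)$ over $Q \times [\ell(Q), 2\ell(Q)]$, sum over $Q \in \mucubes(R)$ with $\alpha_\mu(1000Q) \leq \delta^2$, interchange the order of summation among $Q$, $S$ and $P$, and apply Theorem \ref{theorem:tolsaalpha} to the $\alpha_\mu$ contribution and Lemma \ref{lemma:tolsa2008} to the $\dist(x, L_Q)/\ell(Q)$ contribution, precisely as in the proof of Lemma 5.12 of \cite{tolsa2014}; this gives the desired bound $\lesssim \mu(R)$. The step I expect to require the most care is the second one: on the one hand the convergence of the inner sum over $J$ genuinely depends on the cancellation in part (1) of Lemma \ref{lemma:wavcoeff} (without it $\sum_{J \subseteq I} \ell(J)^n$ diverges), and on the other hand the repeated reshuffling of the nested sums over scales has to be carried out so that the resulting series telescopes against Tolsa's $\alpha$-Carleson packing condition; since this bookkeeping is done in full in \cite{tolsa2014}, it poses no essential difficulty here.
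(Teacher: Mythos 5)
There is a genuine gap. Your plan transplants the argument for $\tree(x,t)$ cubes (\cite{tolsa2014}, Lemmas 5.10--5.12) onto the $\Stop(x,t)$ contribution, but the paper's own proof points to Lemmata 5.13 and 5.14 of \cite{tolsa2014} precisely because the argument for the stopping part must be different. Note first that Lemma \ref{lemma:existenceP} is stated only for $I \in \tree(x,t)$; and if $J \subset I$ with $I \in \Stop(x,t) \subset \reg(x,t)$, then by definition $J \notin \tree(x,t)$, so the lemma does not apply to the cubes you are handling. The substantive problem is that the bound you propose for each $J$-term, namely $\lesssim \ell(J)^n \bigl( \sum_{P(J) \subset S \subset Q} \alpha_\mu(2S) + \dist(x,L_Q)/\ell(Q) \bigr)$, is useless below the stopping scale: the whole point of the stopping condition \eqref{eq:ns_ur_cBsmall2} is that the accumulated $\alpha$-sum exceeds $\delta$ once one crosses the stopping cubes, and for $J$ much smaller than $I$ the tail $\sum_{P(J) \subset S \subset Q} \alpha_\mu(2S)$ has $\sim \log(\ell(Q)/\ell(P(J)))$ terms, each only $\lesssim 1$, so it can be arbitrarily large as $\ell(J) \to 0$. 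The cancellation $\sum_{J \subset I, 5J \cap \partial \mathbb{B} \neq \emptyset} \ell(J)^n \lesssim \ell(I)^n$ does not rescue the estimate when it is multiplied by an unbounded factor.

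The correct mechanism, which is the content of \cite{tolsa2014} Lemmata 5.13--5.14, does not decompose into $\alpha$-numbers below the stopping time at all. Instead, one re-sums the wavelet series inside a fixed stopping cube,
\begin{align*}
\sum_{\substack{J \in \smallcubes(x,t)\\ J \subset I}} a_J \wavj \;=\; \gio \cdot (\text{a smooth truncation adapted to } I) + \text{error},
\end{align*}
and bounds the resulting integral against $\mu$ directly via $\|\gio\|_\infty \leq 1$, the AD-regularity of $\mu$, and the fact that the $\mu$-measure of the set swept out by $\bigcup_{I \in \Stop(x,t)} (x + (\pix)^{-1}(t \cdot 5I))$ is small (the regularized stopping cubes have bounded overlap and their total size is controlled by the Carleson packing of the $\alpha$-numbers through the maximality of the family $\term$). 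The $\alpha$-numbers re-enter only at this outer level, to control the measure of the stopping region in the final $(x,t)$-integral, not to estimate individual sub-stopping wavelet coefficients. You should replace your second step entirely by this re-summation estimate.
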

This follows as Lemmas 5.13 and 5.14 in \cite{tolsa2014}.

\subsection{Final estimates}
We have that
\begin{align*}
& \sum_{\substack{Q \in \mucubes(R)\\ \alpha_\mu(1000Q)\leq \delta^2}} \int_Q \int_{\ell(Q)}^{2\ell(Q)} II(x, t)^2 \, \dt \, d\mu(x)\\ 
& = \sum_{\substack{Q \in \mucubes(R)\\ \alpha_\mu(1000Q)\leq \delta^2}}  \int_Q \int_{\ell(Q)}^{2\ell(Q)} \left| \sum_{i=1}^d \sum_{I \in \dN\dG(x,t)} \frac{a_I}{t^n} \int \wavi^i \ps{\frac{\pisp\left(R_{L_Q^x}(y-x)\right)}{t}} \, d\mu(y)\, \right|^2 \, \dt d\mu(x) \\
& \leq \sum_{i=1}^d \sum_{\substack{Q \in \mucubes(R)\\ \alpha_\mu(1000Q)\leq \delta^2}}  \int_Q \int_{\ell(Q)}^{2\ell(Q)} \left|  \sum_{I \in \dN\dG(x,t)} \frac{a_I}{t^n} \int \wavi^i \ps{\frac{\pisp\left(R_{L_Q^x}(y-x)\right)}{t}} \, d\mu(y)\, \right|^2 \, \dt d\mu(x) \\
\end{align*}

For each $i \in \{1,...,d\}$, we write
\begin{align*}
& \sum_{\substack{Q \in \mucubes(R)\\ \alpha_\mu(1000Q)\leq \delta^2}}  \int_Q \int_{\ell(Q)}^{2\ell(Q)} \left|  \sum_{I \in \dN\dG(x,t)} \frac{a_I}{t^n} \int \wavi^i \ps{\frac{\pisp\left(R_{L_Q^x}(y-x)\right)}{t}} \, d\mu(y)\, \right|^2 \, \dt d\mu(x) \\
& \lesssimt{\eqref{eq:nonegbig}, \eqref{eq:nonegsmall}} \sum_{\substack{Q \in \mucubes(R)\\ \alpha_\mu(1000Q)\leq \delta^2}}  \int_Q \int_{\ell(Q)}^{2\ell(Q)} \left|  \sum_{I \in \bigcubes(x,t)} \frac{a_I}{t^n} \int \wavi^i \ps{\frac{\pisp\left(R_{L_Q^x}(y-x)\right)}{t}} \, d\mu(y)\, \right|^2 \, \dt d\mu(x) \\
& \enskip \enskip +  \sum_{\substack{Q \in \mucubes(R)\\ \alpha_\mu(1000Q)\leq \delta^2}}  \int_Q \int_{\ell(Q)}^{2\ell(Q)} \left|  \sum_{I \in \smallcubes(x,t)} \frac{a_I}{t^n} \int \wavi^i \ps{\frac{\pisp\left(R_{L_Q^x}(y-x)\right)}{t}} \, d\mu(y)\, \right|^2 \, \dt d\mu(x) 
\end{align*}
\begin{align*}
& \lesssimt{\eqref{eq:splittree}}  \sum_{\substack{Q \in \mucubes(R)\\ \alpha_\mu(1000Q)\leq \delta^2}}  \int_Q \int_{\ell(Q)}^{2\ell(Q)} \left|  \sum_{I \in \bigcubes(x,t)} \frac{a_I}{t^n} \int \wavi^i \ps{\frac{\pisp\left(R_{L_Q^x}(y-x)\right)}{t}} \, d\mu(y)\, \right|^2 \, \dt d\mu(x)\\
& \enskip \enskip + \sum_{\substack{Q \in \mucubes(R)\\ \alpha_\mu(1000Q)\leq \delta^2}}  \int_Q \int_{\ell(Q)}^{2\ell(Q)} \left|  \sum_{I \in \tree(x,t)} \frac{a_I}{t^n} \int \wavi^i \ps{\frac{\pisp\left(R_{L_Q^x}(y-x)\right)}{t}} \, d\mu(y)\, \right|^2 \, \dt d\mu(x) \\
& \enskip \enskip +   \sum_{\substack{Q \in \mucubes(R)\\ \alpha_\mu(1000Q)\leq \delta^2}}  \int_Q \int_{\ell(Q)}^{2\ell(Q)} \left|  \sum_{I \in \Stop(x,t)} \sum_{\substack{J \in \smallcubes(x,t)\\ J \subset I}} \frac{a_J}{t^n} \int \wavj^i \left( \frac{\pisp\ps{R_{L_Q^x}(y-x)}}{t} \right) \, d\mu(y)\right|^2 \, \dt d\mu(x)
\end{align*}
\begin{align*}
& \lesssim \sum_{\substack{Q \in \mucubes(R)\\ \alpha_\mu(1000Q)\leq \delta^2}}  \int_Q \int_{\ell(Q)}^{2\ell(Q)} \left|  \sum_{I \in \bigcubes(x,t)} \frac{a_I}{t^n} \int \wavi^i \ps{\frac{\pisp\left(R_{L_Q^x}(y-x)\right)}{t}} \, d\mu(y)\, \right|^2 \, \dt d\mu(x)\\
&\enskip \enskip + \mu(R),
\end{align*}
by Lemma \ref{lemma:ns_ur_cSNG1} and Lemma \ref{lemma:ns_ur_cSNG2}.

Moreover, by \eqref{eq:ns_ur_cB1B2B3}, we have that 
\begin{align*}
 & \left|  \sum_{I \in \bigcubes(x,t)} \frac{a_I}{t^n} \int \wavi^i \ps{\frac{\pisp\left(R_{L_Q^x}(y-x)\right)}{t}} \, d\mu(y)\, \right|^2 \\
 &\lesssim \sum_{I \in \bigcubes(x,t)} \frac{a_I}{t^n} \left( \frac{\ell(Q)}{\ell(P(I))}\right)^{n/2} \left(\frac{\dist(0, L_Q)}{\ell(P(I))} +  \sum_{\substack{ S \in \mucubes\\Q \subset S \subset P(I)}} \alpha_\mu(2S)\right) \ell(P(I))^n\\
 \end{align*}
 
 Arguing as in Lemma 5.4 and Lemma 5.5 in \cite{tolsa2014} one obtains the desired Carleson estimate. This concludes the proof of Proposition \ref{proposition:ns_ur_c}.
 
 \subsection{Estimates for $II(x,t)$ when $\Omega \neq Id$}
 Let us consider the estimate of $II(x,t)$ (as given in \eqref{e:I+II}). Recall that from Subsection \ref{s:est-II-Id} onward we assumed that $\Omega = Id$. Assume now that $d=2$, $n=1$ and that $\Omega$ and $\mu$  are as in the statement of Theorem \ref{t:UR-Omega}. 
 
 Recall that $II(x,t) = t^{-1} \av \int_B(x,t) K_t(\pixo(y) - x) \, d\mu(y)$. We proceed as in \eqref{e:IIa} to arrive at $$t^{-1} \int_{B_1(x,t)} K_t(y-x) \, d\pixo[\mu](y).$$
 Note that because $K(x) = |x| \Omega(x/|x|)$, $K(L_Q^x-x)$ is a line through the origin. This is not the case if $L_Q^x$ is an $n$-plane in $\R^d$ for $n \neq 1$ or $d \neq 2$. However, with the assumptions of Theorem \ref{t:UR-Omega}, this holds, and thus we can proceed as in Subsection \ref{s:est-II-Id} to define the appropriate function $g_1$ as in \eqref{e:g}, where this time $\{e_1, e_2\}$ will be taken to be the standard basis of $K(L_Q^x-x) \times K(L_Q^x-x)^\perp$. 
 The rest of the argument goes through unchanged. This gives Proposition \ref{p:UR-C-Omega}, and thus one direction of Theorem \ref{t:UR-Omega}.
\section{Rectifiability implies finiteness of square function}

Define the operator $C_\mu$ by
\begin{align*}
C_\mu(f)(x) := \left( \int_0^\infty \left|C_{f\mu}(x,t)\right|^2 \, \dt\right)^{\frac{1}{2}}.
\end{align*}
Here $x \in \R^d$.

Here we will prove the following. 
\begin{proposition}\label{p:rectif}
Let $\mu$ be a finite $n$-rectifiable measure on $\R^d$. Then 
\begin{align*}
    C_\mu(x) < \infty \enskip \mbox{ for } \mu \mbox{-almost every } x \in \spt(\mu).
\end{align*}
\end{proposition}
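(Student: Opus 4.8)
We adapt the argument of Tolsa and Toro \cite{tolsatoro} for a density square function, with the center-of-mass kernel $\psit$ playing the role of their density differences. \textbf{Step 1 (reductions).} Since $\mu$ is finite we may take it compactly supported: the layer-cake estimate from the proof of Lemma \ref{lemma:gaussian} (which uses only finiteness of $\mu$) gives $|\cmufi(x,t)|\lesssim\mu(\R^d)\,t^{-n}$, so $\int_1^\infty|\cmufi(x,t)|^2\,\dt<\infty$, while the Gaussian factor in $\phi$ makes the contribution of $\mu|_{\R^d\setminus B(x,2)}$ to $\cmufi(x,t)$ rapidly decaying as $t\to0$. Writing $\mu=f\,\hn|_E$ with $f>0$ $\mu$-a.e., restricting to $\{f\le m\}$ (which exhausts $\mu$) and using that $C_\mu(x)<\infty$ $\mu$-a.e.\ is a local statement, we reduce to $\mu=f\,\hn|_E$ with $E\subset\R^d$ compact $n$-rectifiable and $f\in L^1\cap L^\infty(\hn|_E)$. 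We work with the smooth numbers $\cmufi$ (Definition \ref{def:smoothed_c}), recalling the comparison there with the original $C$-numbers, so it suffices to prove $\int_0^1|\cmufi(x,t)|^2\,\dt<\infty$ for $\mu$-a.e.\ $x$. We will also use that for an $n$-AD-regular uniformly rectifiable $\nu$ the operator $g\mapsto\bigl(\int_0^\infty|C_{g\nu,\phi}(\cdot,t)|^2\,\dt\bigr)^{1/2}$ is bounded on $L^2(\nu)$; this follows from the Carleson estimate of Proposition \ref{proposition:ns_ur_c} by a standard $T1$-type argument for square functions, the kernel $\psit$ being odd so that only the testing condition on $g\equiv1$ is needed.

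\textbf{Step 2 (localization and the main term).} By rectifiability, $E\subseteq\bigcup_j\Gamma_j$ up to an $\hn$-null set, with each $\Gamma_j$ an $n$-dimensional Lipschitz graph of arbitrarily small (uniform) constant, hence $\hn|_{\Gamma_j}$ uniformly rectifiable. Fix $j$; set $f_j:=f\,\chara_{E\cap\Gamma_j}$ (extended by $0$ to $\Gamma_j$) and $\mu_j':=\mu|_{E\setminus\Gamma_j}$, so that $\mu=f_j\,\hn|_{\Gamma_j}+\mu_j'$ and, by linearity in the measure and the triangle inequality, $|\cmufi(x,t)|\le|C_{f_j\hn|_{\Gamma_j},\phi}(x,t)|+|C_{\mu_j',\phi}(x,t)|$. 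For the main term, $f_j\in L^2(\hn|_{\Gamma_j})$ and $\hn|_{\Gamma_j}$ is uniformly rectifiable, so by the $L^2$ bound of Step 1, $\int_0^\infty|C_{f_j\hn|_{\Gamma_j},\phi}(x,t)|^2\,\dt<\infty$ for $\hn|_{\Gamma_j}$-a.e.\ $x$, in particular for $\mu$-a.e.\ $x\in E\cap\Gamma_j$ since $\mu|_{E\cap\Gamma_j}\ll\hn|_{\Gamma_j}$.

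\textbf{Step 3 (the error term).} It remains to control $\int_0^1|C_{\mu_j',\phi}(x,t)|^2\,\dt$ for $\hn|_{E\cap\Gamma_j}$-a.e.\ $x$. The support $E\setminus\Gamma_j$ of $\mu_j'$ is disjoint from the closed set $E\cap\Gamma_j$, and for $\hn|_{E\cap\Gamma_j}$-a.e.\ $x$ it has density $0$ at $x$; but its density relative to $x$ decays at no controlled rate, so the crude pointwise bound $|C_{\mu_j',\phi}(x,t)|\lesssim\mu\bigl((E\setminus\Gamma_j)\cap B(x,Ct)\bigr)/t^n$ does not by itself give square-summability in $\dt$. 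As in \cite{tolsatoro} one estimates instead the averaged quantity $\int_{E\cap\Gamma_j}\int_0^1|C_{\mu_j',\phi}(x,t)|^2\,\dt\,d\hn(x)$: expanding the square as a double integral over $(E\setminus\Gamma_j)^2$ of $\langle\psit(x-y),\psit(x-z)\rangle$ and integrating first in $x\in E\cap\Gamma_j$ (which is AD-regular) and then in $t$, one uses the disjointness of the two sets, AD-regularity of the set tested against, and --- crucially --- the smoothness and the vanishing mean on $n$-planes of the odd kernel $\psit$ to see that the resulting triple integral is finite. Hence $\int_0^1|C_{\mu_j',\phi}(x,t)|^2\,\dt<\infty$ for $\hn|_{E\cap\Gamma_j}$-a.e.\ $x$. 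Combining with Step 2, $\int_0^1|\cmufi(x,t)|^2\,\dt<\infty$ for $\mu$-a.e.\ $x\in E\cap\Gamma_j$; letting $j$ vary proves Proposition \ref{p:rectif}, and Theorem \ref{theorem:main_rectifiability} then follows by applying the Proposition to $\mu|_{B_k}$ over a countable cover of $\R^d$ by balls.

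\textbf{Main obstacle.} The heart of the matter is Step 3. Because $\hn|_E$ --- and hence $\mu$ --- is not AD-regular, one cannot apply the uniformly rectifiable Carleson estimate of Section 3 to $\mu$ directly; the discrepancy between $\mu$ and its ambient Lipschitz graphs must be shown to contribute only a finite amount to the square function at $\hn$-a.e.\ good point, with no quantitative control on how the set flattens out. Transplanting the Fubini-plus-cancellation argument of \cite{tolsatoro} from the density kernel to $\psit$ is the principal task here; the other ingredients --- the reductions of Step 1, the $L^2$-boundedness deduced from Proposition \ref{proposition:ns_ur_c}, and the localization of Step 2 --- are comparatively routine.
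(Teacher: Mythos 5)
Your Step 1 and Step 2 are in the spirit of the paper (it likewise establishes $L^2(\mu)$-boundedness of $C_\mu$ for finite uniformly rectifiable $\mu$ --- Proposition \ref{proposition:L2} --- though by a direct martingale decomposition rather than an abstract $T1$-style reduction). Step 3, however, contains a genuine gap. You claim that
\begin{align*}
\int_{\Gamma_j}\int_0^1 |C_{\mu_j',\phi}(x,t)|^2\,\dt\,d\hn|_{\Gamma_j}(x) < \infty
\end{align*}
follows from expanding the square, Fubini, and the cancellation of $\psit$. But this amounts to an $L^1\to L^2$ estimate $\|C_{\nu}\|^2_{L^2(\hn|_{\Gamma_j})}\lesssim \|\nu\|$ for finite measures $\nu$ supported off $\Gamma_j$, and this is false even for rectifiable $\nu$. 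Take $n=1$, $d=2$, $\Gamma_j$ a line, and let $\nu=\sum_k \nu_k$ where $\nu_k$ is $2^{-k}$ total mass smeared uniformly on a short segment parallel to $\Gamma_j$ at height $\delta_k$, with the blobs far apart along $\Gamma_j$. A direct computation (the horizontal component of $\psit$ cancels, the vertical one contributes $\approx \delta_k/t$, the contribution peaks at $t\approx\delta_k$) gives $\|C_{\nu_k}\|^2_{L^2(\hn|_{\Gamma_j})}\approx 2^{-2k}\delta_k^{-1}$; choosing $\delta_k=2^{-2k}$ makes each term $\approx 1$, so the sum diverges while $\|\nu\|=1<\infty$ and $\nu$ is $1$-rectifiable. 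The oddness and mean-zero-on-planes properties of $\psit$ only cancel the tangential components; the normal contribution survives and produces precisely the divergence. So the ``resulting triple integral'' need not be finite, and the ``Hence'' at the end of Step 3 does not follow.

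What is true --- and what the paper (following \cite{tolsatoro}, Section 5) proves --- is the weak-$(1,1)$ estimate of Proposition \ref{proposition:L1weak}: for $\sigma$ finite and uniformly rectifiable and $\nu$ an arbitrary finite Borel measure, $\sigma(\{C_\nu>\lambda\})\lesssim\|\nu\|/\lambda$. This is obtained from the Calder\'on--Zygmund decomposition for measures (Theorem \ref{theorem:czmeasure}) together with the $L^2$ estimate. The weak-$(1,1)$ bound is exactly what survives (it is implied by, but strictly weaker than, your averaged $L^2$ claim), and it already yields $C_\nu(x)<\infty$ for $\sigma$-a.e.\ $x$. Applying it with $\sigma=\hn|_{\Gamma_j}$ and $\nu=\mu$ (there is no need to split off $\mu_j'$), and then using absolute continuity of $\mu$ with respect to $\hn$ on $E\cap\Gamma_j$, gives the conclusion. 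So the fix is to replace your Step 3 entirely by an appeal to Proposition \ref{proposition:L1weak}; at that point the decomposition in your Step 2 becomes superfluous, and the argument collapses to the one the paper gives.
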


\begin{remark}
The same proof goes through almost verbatim for the case where $K(x)= |x|\Omega(x/|x|)$, and $\Omega$ is as in the statement of Theorem \ref{t:R-Omega}. Actually, the proof below would work in any dimensions $n,d$. However, it rests upon Proposition \ref{proposition:ns_ur_c}. Since we only have Proposition \ref{p:UR-C-Omega} for $n=1$ and $d=1$, we can prove Theorem \ref{t:R-Omega} only in this case. 
\end{remark}

To prove Proposition \ref{p:rectif}, we will show that if $\mu$ is a finite $n$-uniformly rectifiable measure on $\R^d$, and if $\nu$ is a Borel measure, then 
\begin{align} \label{eq:rec_finite1}
    \mu\ps{\ck{ x\in \R^d \, |\, C_\nu(x) > \lambda }} \leq \frac{\|\nu\|}{\lambda}
\end{align}
Now we let $\spt(\mu)$ be decomposed into a countable compact subsets, say $\{E_n\}$, and we let $\nu=\mu|_{E_n}$. Then, assuming \eqref{eq:rec_finite1}, 
\begin{align*}
\hn|_{E_n} \ps{\ck{ x \in \R^d \, |\, C_\mu(x) > \lambda}} \leq \frac{\|\mu\|}{\lambda}.
\end{align*}

\subsection{$L^2(\mu)$ boundedness of $C_\mu$}
This subsection will be devoted to proving the following proposition.
\begin{proposition} \label{proposition:L2}
Let $\mu$ be an $n$-uniformly rectifiable measure on $\R^d$ such that $\mu(\R^d)< \infty$. The operator $C_\mu$ is bounded from $L^2(\mu)$ to $L^2(\mu)$.
\end{proposition}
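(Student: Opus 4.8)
The plan is to view $C_\mu$ as a vector-valued square function attached to a standard Calder\'on--Zygmund kernel and to deduce its $L^2(\mu)$-boundedness from a $T1$-type theorem for square functions whose only nontrivial hypothesis — a Carleson measure condition — is precisely the content of Theorem \ref{theorem:main_UR}. Concretely, for $t>0$ set $T_tf(x):=\frac{1}{t^n}\int_{B(x,t)}\frac{x-y}{t}f(y)\,d\mu(y)\in\R^d$, so that $|T_tf(x)|=|C_{f\mu}(x,t)|$ and $C_\mu(f)(x)=\bigl\|(T_tf(x))_{t>0}\bigr\|_{\mathcal H}$, where $\mathcal H:=L^2\bigl((0,\infty),\dt;\R^d\bigr)$. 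Thus $C_\mu$ is the square function attached to the $\mathcal H$-valued kernel $\mathbf K(x,y):=\bigl(t^{-n-1}(x-y)\,\chara_{B(0,t)}(x-y)\bigr)_{t>0}$. The hypothesis $\mu(\R^d)<\infty$ ensures $1\in L^2(\mu)$ and disposes of the usual a priori concerns (for instance by first establishing the estimate, with constants independent of the truncation, for the kernels truncated to $\{t_0\le|x-y|\le t_1\}$, whose square functions are trivially bounded).

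First I would record the kernel bounds. Since $\int_0^\infty \bigl|t^{-n-1}(x-y)\chara_{B(0,t)}(x-y)\bigr|^2\,\dt=|x-y|^2\int_{|x-y|}^\infty t^{-2n-3}\,dt\sim |x-y|^{-2n}$, the size estimate $\|\mathbf K(x,y)\|_{\mathcal H}\lesssim |x-y|^{-n}$ holds. For the regularity, fix $x$ and $y,y'$ with $|y-y'|\le\frac12|x-y|$ and split the $t$-integral: on the range where $t$ exceeds both $|x-y|$ and $|x-y'|$ the difference of the two smooth pieces is $\frac{y'-y}{t^{n+1}}$, contributing $\lesssim |y-y'|^2|x-y|^{-2n-2}$ to the square of the $\mathcal H$-norm; on the complementary thin range where $t$ lies between $|x-y|$ and $|x-y'|$ — an interval of length $\le|y-y'|$ on which one cut-off vanishes and the integrand is $\lesssim|x-y|^{-n}$ — the contribution is $\lesssim |y-y'|\,|x-y|^{-2n-1}$. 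Hence
\begin{align*}
\|\mathbf K(x,y)-\mathbf K(x,y')\|_{\mathcal H}\lesssim \Bigl(\tfrac{|y-y'|}{|x-y|}\Bigr)^{1/2}\frac{1}{|x-y|^{n}},
\end{align*}
and the same estimate holds in the $x$-variable since $\mathbf K$ depends only on $x-y$; moreover $\int \bigl|t^{-n-1}(x-y)\chara_{B(0,t)}(x-y)\bigr|\,d\mu(y)\le t^{-n}\mu(B(x,t))\lesssim 1$ by $n$-AD regularity. Thus $\mathbf K$ is a standard $\mathcal H$-valued Calder\'on--Zygmund kernel of dimension $n$ with H\"older exponent $\tfrac12$.

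Then I would invoke a $T1$-type theorem for square functions on $n$-AD regular measures: for operators $T_t$ with a kernel obeying the bounds above, the square function $f\mapsto \bigl\|(T_tf)_{t>0}\bigr\|_{\mathcal H}$ is bounded on $L^2(\mu)$ if and only if $|T_t1(x)|^2\,\dt\,d\mu(x)$ is a Carleson measure on $\spt(\mu)\times(0,\infty)$ — no cancellation of the $T_t$ themselves is needed, the Carleson condition playing the role that $T1\in\mathrm{BMO}$ plays in the classical $T1$ theorem, while the weak boundedness property is automatic from the size bound and AD regularity. Now $T_t1(x)=\frac{1}{t^n}\int_{B(x,t)}\frac{x-y}{t}\,d\mu(y)$ satisfies $|T_t1(x)|=|C_\mu(x,t)|$, so the Carleson condition to be verified is exactly
\begin{align*}
\int_{B}\int_0^{r_B}|C_\mu(x,t)|^2\,\dt\,d\mu(x)\lesssim r_B^n\sim \mu(B)
\end{align*}
for all balls $B$ centred on $\spt(\mu)$, which is the content of Theorem \ref{theorem:main_UR} (equivalently Proposition \ref{proposition:ns_ur_c}). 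This proves the proposition.

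The hard part is essentially bookkeeping of two kinds. First, one must pin down a statement of the square-function $T1$ theorem that applies in this non-smooth setting: the cut-off $\chara_{B(0,t)}$ forces the kernel regularity to be H\"older of exponent $\tfrac12$ rather than Lipschitz (still admissible), and one has to match the Carleson box $B\times(0,r_B)$ of Theorem \ref{theorem:main_UR} with the one the theorem demands. Second, if one prefers not to cite the $T1$ theorem as a black box, the bound can be produced by hand along the lines already used for Proposition \ref{proposition:ns_ur_c}: decompose $f$ by a Littlewood--Paley family $\{\Delta_j\}$ adapted to the lattice $\mucubes$ (so that $\Delta_j1=0$ and $\sum_j\Delta_j=I$ on the orthogonal complement of the constants), estimate $\|T_t\Delta_jf\|_{L^2(\mu)}\lesssim 2^{-|j-k|/2}\|\Delta_jf\|_{L^2(\mu)}$ for $t\sim 2^{-k}$ by combining the kernel bounds with the cancellation $\Delta_j1=0$, and control the leftover ``diagonal'' term $(S_{\le t}f)\,T_t1$ — where $S_{\le t}$ is the corresponding partial sum — by the Carleson embedding theorem fed with Theorem \ref{theorem:main_UR}; this amounts to re-running the argument of Proposition \ref{proposition:ns_ur_c} with a general $f\in L^2(\mu)$ in place of the constant function $1$.
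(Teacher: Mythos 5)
Your primary route is genuinely different from the paper's. You propose to package $C_\mu$ as the square function attached to an $\mathcal H$-valued standard kernel, with $\mathcal H=L^2(\dt;\R^d)$, verify the kernel estimates — your size bound $\|\mathbf K(x,y)\|_{\mathcal H}\sim|x-y|^{-n}$ and the $\mathcal H$-H\"older-$\tfrac12$ regularity, with the extra square-root loss coming from the thin $t$-range where exactly one of the two indicators survives, are both computed correctly — and then feed the Carleson condition of Theorem \ref{theorem:main_UR} into a $T1$ theorem for square functions. The paper argues by hand, exactly along the lines of the fallback you sketch in your final paragraph: it localizes $C_\mu(f)^2$ onto the cubes of $\mucubes$, performs the martingale decomposition
\begin{align*}
\chara_{N(Q)}f \;=\; \chara_{N(Q)}f_Q \;+\; \sum_{R\in\neig(Q)}\chara_R(f_R-f_Q)\;+\;\sum_{R\in\neig(Q)}\sum_{P\in\mucubes(R)}\Delta_P f,
\end{align*}
and estimates the three resulting pieces $A$, $B$, $C$ after Tolsa--Toro \cite{tolsatoro}. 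The Carleson estimate of Proposition \ref{proposition:ns_ur_c} enters in $A$, $B$, and in the boundary-crossing subpiece $C_2$; the interior subpiece $C_1$ is handled by the cancellation $\int\Delta_P f\,d\mu=0$, which allows one to replace $x$ by the center $c_P$ of $P$ and gain the factor $\ell(P)/\ell(Q)$. So your second plan is the paper's proof, and your first plan is a legitimate alternative.

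One caution on the $T1$ route, which you have somewhat underestimated as mere bookkeeping: the hard cutoff $\chara_{B(0,t)}$ means the family $(K_t)_t$ fails the pointwise-in-$t$ bound $|K_t(x,y)-K_t(x,y')|\lesssim |y-y'|^\alpha t^{-n-\alpha}$ under which most square-function $T1$ theorems in the literature are stated, and at the same time the off-the-shelf $T1$ theorem for $\mathcal H$-valued CZ operators is phrased with $T1\in\mathrm{BMO}(\mu;\mathcal H)$ as the cancellation hypothesis rather than the Carleson condition on $|T_t1(x)|^2\,\dt\,d\mu(x)$. Your integrated $\mathcal H$-H\"older bound is the right substitute for the missing pointwise regularity, but you must then either locate (or reprove, by essentially running the martingale argument once in the abstract) a square-function $T1$ statement for $n$-ADR measures that takes exactly this integrated regularity plus the Carleson hypothesis as input, which is precisely the part the paper sidesteps by doing the decomposition explicitly. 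Once that statement is pinned down your argument is complete, and it has the conceptual advantage of making it transparent that the only geometric input is Proposition \ref{proposition:ns_ur_c}; the paper's by-hand approach, for its part, avoids any reliance on a general theorem whose hypotheses one would otherwise have to match carefully against the kernel's limited smoothness.
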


Set 
\begin{align*}
    \cmuk(f)(x) := \left( \int_{2^{-k-2}}^{2^{-k-1}} \left| \cmuef(x, t)\right|^2 \, \dt \right)^{1/2}.
\end{align*}
We then may write 
\begin{align*}
    C_\mu(f)(x)^2= \int_0^\infty |\cmuef(x,t)|^2\, \dt = \sum_{k \in \Z} \cmuk(f)(x)^2.
\end{align*}
We have that
\begin{align} \label{eq:L22}
    \int \left|C_\mu(f)(x) \right|^2 \, d\mu(x) & = \int \sum_{k \in \Z} |\cmuk(f)(x)|^2 \, d\mu(x) \\
    & = \sum_{ k \in \Z} \int |\cmuk(f)(x)|^2 \,  d\mu(x) \\
    & = \sum_{k \in \Z} \sum_{Q \in \mucubes^k}\int_Q |\cmuk(f)(x)|^2 \, d\mu(x)\\
    & = \sum_{Q \in \mucubes} \int_{\R^d} \left| \chara_Q (x) C_{\mu, J(Q)} (f)(x) \right|^2 \, d\mu(x).
\end{align}

\begin{notation}
For $Q \in \mucubes$, 
\begin{align*}
    C_Q(f)(x) := \chara_Q(x) C_{\mu, J(Q)}(f)(x).
\end{align*}
\end{notation}

Writing this out:
\begin{align*}
    C_Q(f)(x) & = \chara_Q(x) \cmuq (f)(x) \\
    & =\chara_Q(x) \int_{\ell(Q)/4}^{\ell(Q)/2} \left| \frac{1}{t^n} \int_{B(x,t)} \quotyx f(y)\, d\mu(y) \right|^2 \, \dt.
\end{align*}
Thus, for fixed $x \in Q$, $|x-y| \leq \ell(Q)/2$ and thus
\begin{align*}
    y \in N_{\ell(Q)/2} (Q),
\end{align*}
the $\frac{\ell(Q)}{2}$-neighbourhood of Q. If we set
\begin{align*}
    \neig(Q):= \left\{ P \in \mucubes \, |\, \ell(P) = \ell(Q) \mbox{  and  } \dist(P, Q) \leq \ell(Q)\right\},
\end{align*}
we see therefore that 
\begin{align} \label{eq:L21}
y \in \bigcup_{P \in \neig(Q)} P.
\end{align}
\begin{notation}
For $Q \in \mucubes$, set 
\begin{align*}
    N(Q):= \bigcup_{P \in \neig(Q)} P.
\end{align*}
\end{notation}

With \eqref{eq:L21}, we see that
\begin{align*}
    \cmuq(f)(x) = \cmuq(\chara_{N(Q)} f)(x).
\end{align*}

Now, we may decompose $\chara_{N(Q)} f$ through a martingale decomposition. That is, we may write 
\begin{align*}
\chara_{N(Q)}(x)f(x) = \sum_{R \in \neig(Q)}\left( \chara_R(x)f_R + \sum_{P \in \mucubes(R)}\Delta_P f(x) \right).
\end{align*}
Recall that
\begin{align*}
    \Delta_Pf(x) = \sum_{S \in \mucubes^1(P)}\left( f_S - f_P\right) \chara_S(x).
\end{align*}
We write
\begin{align*}
    &\sum_{R \in \neig(Q)}\left( \chara_R(x)f_R + \sum_{P \in \mucubes(R)}\Delta_P f(x) \right)\\ & = \sum_{R \in \neig(Q)}\chara_R(x) f_R + \sum_{R \in \neig(Q)}\sum_{P \in \mucubes(R)} \Delta_Pf(x)  \\
    & = \sum_{R \in \neig(Q)}\chara_R(x) f_R + \sum_{R \in \neig(Q)} \chara_R(x) f_Q - \sum_{R \in \neig(Q)} \chara_R(x) f_Q + \sum_{R \in \neig(Q)}\sum_{P \in \mucubes(R)} \Delta_Pf(x)  \\
    & = \chara_{N(Q)}(x) f_Q + \sum_{R \in \neig(Q)}\chara_R(x)(f_R - f_Q) + \sum_{R \in N(Q)} \sum_{P \in \mucubes(R)} \Delta_P f(x).
\end{align*}
Hence we  split $\cmuq$ as follows. 
\begin{align*}
   & \cmuq(f)(x) \\
   & = \chara_Q (x) \left(\int_{\ell(Q)/4}^{\ell(Q)/2} \left| t^{-n} \int_{B(x,t)} \quotyx \left(\chara_{N(Q)} f\right) (y) d\mu(y) \right|^2 \dt \right)^{1/2} \\
    & \leq \chara_Q (x) \left(\int_{\ell(Q)/4}^{\ell(Q)/2} \left| t^{-n} \int_{B(x,t)} \quotyx \left(\chara_{N(Q)} f_Q\right) (y) d\mu(y) \right|^2 \dt \right)^{1/2} \\
    & \enskip \enskip + \chara_Q (x) \left(\int_{\ell(Q)/4}^{\ell(Q)/2} \left| t^{-n} \int_{B(x,t)} \quotyx \left(\sum_{R \in \neig(Q)}\chara_R(y)(f_R-f_Q)\right) d\mu(y) \right|^2 \dt \right)^{1/2} \\
    & \enskip \enskip + \chara_Q (x) \left(\int_{\ell(Q)/4}^{\ell(Q)/2} \left| t^{-n} \int_{B(x,t)} \quotyx \left(\sum_{R \in \neig(Q)} \sum_{P \in \mucubes(R)} \Delta_Pf(y)\right)  d\mu(y) \right|^2 \dt \right)^{1/2}.
\end{align*}

Thus, also recalling \eqref{eq:L22}, we see that
\begin{align*}
    & \int \left|C_\mu(f)(x)\right|^2 \, d\mu(x) \\
    & \leq \sum_{Q \in \mucubes} \int_Q |f_Q|^2 |C_\mu(\chara_{N(Q)})(x)|^2\, d\mu(x) \\
    & \enskip \enskip + \sum_{Q \in \mucubes} \int_Q \left| \left(\sum_{R \in \neig(Q)} f_R - f_Q\right) C_\mu(\chara_R)(x) \right|^2 \, d\mu(x) \\
    & \enskip \enskip + \sum_{Q \in \mucubes} \int_Q \left| \sum_{R \in \neig(Q)} C_\mu\left( \sum_{P \in \mucubes(R)} \Delta_Pf\right)(x) \right|^2\, d\mu(y) \\
    & =: A + B + C.
\end{align*}

\subsubsection{Estimates for $A$ and $B$}
The terms $A$ and $B$ may be estimated as the first and second term in equation 4.1, page 6 of \cite{tolsatoro}, to obtain
\begin{align*}
    A+ B \lesssim \|f\|_{L^2(\mu)}^2.
\end{align*}

\subsubsection{Estimate for $C$}
We write
\begin{align*}
    & \sum_{Q \in \mucubes} \int_Q \int_{\ell(Q)/4}^{\ell(Q)/2} \left|\sum_{R \in \neig(Q)} t^{-n}\int_{B(x,t)} \quotyx \left(\sum_{P \in \mucubes(R)} \Delta_Pf\right)(y) \, d\mu(y) \right|^2 \, \dt \, d\mu(x) \\
    & \leq \sum_{Q \in \mucubes} \sum_{P \in \neig(Q)} \int_Q \int_{\ell(Q)/4}^{\ell(Q)/2}\left| \sum_{P \in \mucubes(R)} t^{-n}\int_{B(x,t)} \quotyx \Delta_P(y) \, d\mu(y) \right|^2\, \dt \mu(x) \\
    & \lesssim  \sum_{Q \in \mucubes} \sum_{R \in \neig(Q)} \int_Q \int_{\ell(Q)/4}^{\ell(Q)/2}\left| \sum_{\substack{P \in \mucubes(R)\\ P \cap \partial B(x,t) = \emptyset}} t^{-n}\int_{B(x,t)} \quotyx \Delta_P(y) \, d\mu(y) \right|^2\, \dt \mu(x)\\
    & \enskip \enskip + \sum_{Q \in \mucubes} \sum_{R \in \neig(Q)} \int_Q \int_{\ell(Q)/4}^{\ell(Q)/2}\left| \sum_{\substack{P \in \mucubes(R)\\ P \cap \partial B(x,t) \neq \emptyset}} t^{-n}\int_{B(x,t)} \quotyx \Delta_P(y) \, d\mu(y) \right|^2\, \dt \mu(x)\\
    & = : C_1 + C_2.
\end{align*}
Notice that if $P \subset B(x,t)^c$, then it is negligible for our computation. The term $C_2$ may be estimated as the third term in in equation 4.1, page 6 of \cite{tolsatoro} (and recalling Proposition \ref{proposition:ns_ur_c}).

To estimate $C_1$, we notice that, because $P \subset B(x,t)$, then 
\begin{align*}
    \int_{B(x,t)} x \Delta_Pf(y) \, d\mu(y) =0.
\end{align*}
Thus, letting $c_P$ to be the center of $P$, recalling that $\spt\left(\Delta_P f\right) \subset P$, and by Cauchy - Scwhartz, we have
\begin{align*}
    & \left| \sum_{\substack{P \in \mucubes(R)\\ P \cap \partial B(x,t) \neq \emptyset}} t^{-n}\int_{B(x,t)} \quotyx \Delta_P(y) \, d\mu(y) \right|\\
    & = \left| \sum_{\substack{P \in \mucubes(R)\\ P \cap \partial B(x,t) \neq \emptyset}} t^{-n}\int_{B(x,t)} \ps{\frac{c_P-y}{t} }\Delta_P(y) \, d\mu(y) \right|  \\
    & \lesssim \sum_{\substack{P \in \mucubes(R)\\ P \cap \partial B(x,t) = \emptyset}} \frac{1}{\mu(Q)} \frac{\ell(P)}{\ell(Q)} \|\Delta_Pf\|_{L^1(\mu)} \\
    & \lesssim \sum_{\substack{P \in \mucubes(R)\\ P \cap \partial B(x,t) = \emptyset}} \frac{1}{\mu(Q)} \frac{\ell(P)}{\ell(Q)} \ell(P)^{\frac{n}{2}}\|\Delta_Pf\|_{L^2(\mu)}
\end{align*}
Thus
\begin{align*}
    C_1 \lesssim & \sum_{Q \in \mucubes} \sum_{R \in \neig(Q)} \int_Q \int_{\ell(Q)/4}^{\ell(Q)/2} \left( \sum_{\substack{P \in \mucubes(R)\\ P \cap \partial B(x,t) = \emptyset}} \frac{1}{\mu(Q)} \frac{\ell(P)}{\ell(Q)} \ell(P)^{\frac{n}{2}}\|\Delta_Pf\|_{L^2(\mu)} \right)^2 \\
    & \lesssim  \sum_{Q \in \mucubes} \sum_{R \in \neig(Q)} \sum_{P \in \mucubes(R)} \left( \frac{\ell(P)}{\ell(Q)} \ell(P)^{\frac{n}{2}}\|\Delta_Pf\|_{L^2(\mu)}\right)^2 \, \frac{1}{\mu(Q)} \\
    & \leq \sum_{Q \in \mucubes} \sum_{R \in \neig(Q)} \left( \sum_{P \in \mucubes(R)} \frac{\ell(P)}{\ell(Q)} \|\Delta_Pf\|_{L^2(\mu)}^2 \right) \left(\sum_{P \in \mucubes(R)} \frac{\ell(P)^{n+1}}{\ell(Q)} \right) \, \frac{1}{\mu(Q)}
\end{align*}
We see that 
\begin{align*}
    \sum_{P \in \mucubes(R)} \frac{\ell(P)^{n+1}}{\ell(Q)} \lesssim \mu(Q).
\end{align*}
Finally, by Fubini, 
\begin{align*}
    & \sum_{Q \in \mucubes} \sum_{R \in \neig(Q)} \left( \sum_{P \in \mucubes(R)} \frac{\ell(P)}{\ell(Q)} \|\Delta_Pf\|_{L^2(\mu)}^2 \right) \\
    & = \sum_{P \in \mucubes} \|\Delta_Pf\|_{L^2(\mu)}^2 \sum_{\substack{Q \in \mucubes \\ Q \supset P}} \sum_{R \in \neig(Q)} \frac{\ell(P)}{\ell(Q)}.
\end{align*}
Since the cardinality of $N(Q)$ is bounded above by a universal constant, we see that 
\begin{align*}
    \sum_{\substack{Q \in \mucubes \\ Q \supset P}} \sum_{R \in \neig(Q)} \frac{\ell(P)}{\ell(Q)} \lesssim \sum_{\substack{Q \in \mucubes \\ Q \supset P}} \frac{\ell(P)}{\ell(Q)} \lesssim 1.
\end{align*}
Thus we obtain 
\begin{align*}
C_1 \lesssim \sum_{P \in \mucubes} \|\Delta_P f\|_{L^2(\mu)}^2.
\end{align*}
This together with the estimates on $A,B$ and $C_2$ proves Proposition \ref{proposition:L2}

\subsection{$L^{1,\infty}(\mu)$ boundedness of $C_\mu$}
Let $M(\R^d)$ be the set of finite Borel measures on $\R^d$ and $\nu \in M(\R^d)$. Let the operator $C$ on $M(\R^d)$ be given by
\begin{align*}
    C_\nu(x) := \left(\int_0^\infty |C_\nu(x,t)|^2 \, \dt \right)^{\frac{1}{2}}.
\end{align*}
\begin{proposition} \label{proposition:L1weak}
Let $\mu$ be an $n$-uniformly rectifiable measure on $\R^d$. Then, 
\begin{align*}
    \mu\left( \left\{ x \in \R^d \, |\, C_\nu(x) > \lambda \right\}\right) \leq \frac{\|\nu\|}{\lambda}
\end{align*}
for each $\nu \in M(\R^d)$ with compact support and $\lambda>0$.
\end{proposition}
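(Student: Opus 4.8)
The plan is to derive this weak-type bound from the $L^2(\mu)$-estimate of Proposition~\ref{proposition:L2} by a Calder\'on--Zygmund decomposition; the only ingredient that is genuinely special to our operator is the verification that the (vector-valued) kernel behind $C_\mu$ satisfies Calder\'on--Zygmund estimates. Write $K_t(z):=t^{-n-1}z\,\chara_{B(0,t)}(z)\in\R^d$ and $K(x,y):=\bigl(K_t(x-y)\bigr)_{t>0}$, so that $C_\nu(x)=\bigl\|\int K(x,y)\,d\nu(y)\bigr\|_{L^2((0,\infty),\,dt/t;\,\R^d)}$ and $C$ is subadditive, $C_{\nu_1+\nu_2}\le C_{\nu_1}+C_{\nu_2}$. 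A direct computation gives the size bound $\|K(x,y)\|_{L^2(dt/t)}=c_n|x-y|^{-n}$, and, more importantly, the H\"older-type estimate
\begin{align*}
    \bigl\|K(x,y)-K(x,y')\bigr\|_{L^2(dt/t)}\ \lesssim\ \frac{|y-y'|^{1/2}}{|x-y|^{n+1/2}}\qquad\text{whenever }|y-y'|\le\tfrac12|x-y|.
\end{align*}
The exponent $\tfrac12$ (rather than $1$) is forced by the sharp cut-off $\chara_{B(0,t)}$: splitting the $t$-integral into the range $t\ge\tfrac32|x-y|$, where the integrand is $\lesssim t^{-n-1}|y-y'|$, and the range $\tfrac12|x-y|\le t\le\tfrac32|x-y|$, where at most one of the two indicator functions is nonzero and only for $t$ in an interval of length $\lesssim|y-y'|$ on which the integrand is $\lesssim|x-y|^{-n}$, one obtains $\|K(x,y)-K(x,y')\|_{L^2(dt/t)}^2\lesssim|y-y'|^2|x-y|^{-2n-2}+|y-y'|\,|x-y|^{-2n-1}$, and the second term dominates. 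This is the crux; everything below is routine Calder\'on--Zygmund theory.

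With this in hand I would run the standard Calder\'on--Zygmund decomposition of $\nu$ at height $\lambda$ relative to $\mu$, using the lattice $\mucubes$ of Theorem~\ref{theorem:ADcubes} (the $n$-AD-regularity of $\mu$ making $\mu$ doubling along the lattice). Let $\{Q_i\}\subset\mucubes$ be the maximal cubes with $\nu(Q_i)>\lambda\,\mu(Q_i)$; then they are pairwise disjoint, $\nu(Q_i)\sim\lambda\,\mu(Q_i)$, the set $\Omega:=\bigcup_iQ_i$ has $\mu(\Omega)\le\lambda^{-1}\sum_i\nu(Q_i)\le\lambda^{-1}\|\nu\|$, and on $\Omega^c$ the density of $\nu$ with respect to $\mu$ is bounded by $\lesssim\lambda$. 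Set $g:=\tilde h\,\mu$ with $\tilde h:=h+\sum_i\frac{\nu(Q_i)}{\mu(Q_i)}\chara_{Q_i}$, where $h$ is this bounded density, and $b:=\nu-g=\sum_ib_i$ with $b_i:=\nu|_{Q_i}-\frac{\nu(Q_i)}{\mu(Q_i)}\mu|_{Q_i}$; then each $b_i$ is supported on $Q_i$, has $b_i(\R^d)=0$ and $\|b_i\|\lesssim\nu(Q_i)$, while $\|\tilde h\|_{L^\infty(\mu)}\lesssim\lambda$ and $\|\tilde h\|_{L^1(\mu)}\lesssim\|\nu\|$, so $\|\tilde h\|_{L^2(\mu)}^2\lesssim\lambda\,\|\nu\|$. (For a general $\nu\in M(\R^d)$ one splits off its part away from $\spt\mu$ first; for the application, $\nu\le\mu$ and the decomposition is the textbook one for $L^1(\mu)$, cf.\ \cite{tolsatoro}.)

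Since $C_\nu\le C_g+C_b$, it suffices to bound $\mu(\{C_g>\lambda/2\})$ and $\mu(\{C_b>\lambda/2\})$. For the good part, Proposition~\ref{proposition:L2} together with Chebyshev give $\mu(\{C_g>\lambda/2\})\lesssim\lambda^{-2}\|C_\mu(\tilde h)\|_{L^2(\mu)}^2\lesssim\lambda^{-2}\|\tilde h\|_{L^2(\mu)}^2\lesssim\|\nu\|/\lambda$. For the bad part, discard $\bigcup_iB_{Q_i}$, which satisfies $\mu\bigl(\bigcup_iB_{Q_i}\bigr)\lesssim\sum_i\mu(Q_i)\le\|\nu\|/\lambda$; on its complement we use the vanishing mean of $b_i$ and the H\"older estimate above to get, for $x\notin B_{Q_i}$,
\begin{align*}
    C_{b_i}(x)=\Bigl\|\int\bigl(K(x,y)-K(x,z_{Q_i})\bigr)\,db_i(y)\Bigr\|_{L^2(dt/t)}\ \lesssim\ \frac{\diam(Q_i)^{1/2}\,\|b_i\|}{|x-z_{Q_i}|^{n+1/2}},
\end{align*}
and summing over the annuli $2^{k+1}B_{Q_i}\setminus2^kB_{Q_i}$ (using $\mu(2^kB_{Q_i})\lesssim(2^k\diam Q_i)^n$) yields $\int_{\R^d\setminus B_{Q_i}}C_{b_i}\,d\mu\lesssim\|b_i\|\lesssim\nu(Q_i)$. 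Hence $\int_{\R^d\setminus\bigcup_iB_{Q_i}}C_b\,d\mu\lesssim\sum_i\nu(Q_i)\le\|\nu\|$, and one more application of Chebyshev gives $\mu(\{x\notin\bigcup_iB_{Q_i}:C_b(x)>\lambda/2\})\lesssim\|\nu\|/\lambda$. Adding the three contributions proves the proposition. The whole scheme parallels the argument in \cite{tolsatoro}; the main obstacle is the H\"older bound for the vector-valued kernel, and once it is established the weak-type estimate is a soft consequence of the $L^2(\mu)$-boundedness.
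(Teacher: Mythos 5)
The proposal is correct and proves the proposition, but it is not a line-by-line retracing of the paper's argument; it is a more self-contained Calder\'on--Zygmund scheme with one genuinely new ingredient. The paper decomposes $\nu$ via the Calder\'on--Zygmund decomposition \emph{for measures} (Theorem~\ref{theorem:czmeasure}), splits $d\nu=g\,d\mu+d\beta$, estimates the good part by Proposition~\ref{proposition:L2}, and then controls $\int C_{\beta_j}\,d\mu$ by distinguishing three regimes of $t$ --- $R_j$ outside $B(x,t)$ (trivial), $R_j$ inside $B(x,t)$ (where the cancellation $\beta_j(R_j)=0$ is used and the sharp cut-off is invisible), and $R_j$ meeting $\partial B(x,t)$, the genuinely delicate boundary case, which the paper simply defers to \cite{tolsatoro}; in addition, because the measure-CZ decomposition produces bad pieces supported on enlarged cubes $R_j=6D_j$ while only $2D_j$ is excised, an extra near-field term $B_2$ appears, which the paper handles by Cauchy--Schwarz and the $L^2$ bound. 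Your route replaces all of this case analysis with an explicit vector-valued kernel estimate: you compute once and for all that for $K_t(z)=t^{-n-1}z\chara_{B(0,t)}(z)$ one has $\|K(x,\cdot)\|_{L^2(dt/t)}\sim|x-y|^{-n}$ and, crucially, the H\"older-$\tfrac12$ bound $\|K(x,y)-K(x,y')\|_{L^2(dt/t)}\lesssim|y-y'|^{1/2}|x-y|^{-n-1/2}$, correctly identifying the boundary term of the sharp cut-off as what forces the exponent $\tfrac12$ rather than $1$. Once this lemma is in place, the bad-part bound follows in one line from Minkowski's integral inequality and an annular sum using $n$-AD regularity, with no residual boundary case and no extra near-field term (since you use the intrinsic decomposition with $b_i$ supported on $Q_i$ itself and discard $B_{Q_i}\supset Q_i$). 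What your version costs is generality: the decomposition into maximal $\mu$-cubes with $\nu(Q_i)>\lambda\mu(Q_i)$ implicitly treats $\nu$ as living on $\spt\mu$ (equivalently $\nu\ll\mu$), which is enough for the application $\nu=\mu|_{E_n}$ but not for arbitrary $\nu\in M(\R^d)$ as stated; your parenthetical ``split off the part away from $\spt\mu$'' is more of a promissory note than a reduction (the mass of $\nu$ near but off $\spt\mu$ still needs an argument). The paper's use of Theorem~\ref{theorem:czmeasure} is exactly designed to avoid this issue, at the price of the enlarged supports $R_j$ and the extra term $B_2$. In short: same skeleton, but you prove a clean kernel-regularity lemma where the paper argues case-by-case and outsources the boundary term, while you trade away some generality in the decomposition; both are sound, and your H\"older-$\tfrac12$ observation is a genuine clarification of why standard CZ theory applies to this operator.
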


We will use the Calder\'{o}n-Zygmund decomposition for $\nu \in M(\R^d)$. See Theorem in \cite{tolsa-book}.

\begin{theorem} \label{theorem:czmeasure}
Let $\mu$ be an $n$-AD-regular measure on $\R^d$. For every $\nu \in M(\R^d)$ with compact support and every $\lambda > 2^{d+1}\|\nu\|/\|\mu\|$ we have:
\begin{enumerate}
    \item There exists a finite or countable collection of dyadic cubes $\{D_j\}_{j \in J} \subset \dcubes$ with \begin{align} \label{eq:czmeasure1}
        \sum_{j \in J} \chara_{D_j} \lesssim 1,
    \end{align}
    and a function $f \in L^1(\mu)$ such that, for each $j \in J$, 
    \begin{align}
        & \mu(2D_j) < \frac{2^{d+1}}{\lambda} |\nu|(D_j). \label{eq:czmeasure2}\\
        & \frac{2^{d+1}}{\lambda} \nu(\eta D_j) \leq \mu(2 \eta D_j) \mbox{  for every  } \eta> 2. \label{eq:czmeasure3}\\
        & \nu = f \mu \enskip \mbox{ in } \enskip \R^d \setminus \left( \bigcup_{j \in J} D_j \right) \enskip \mbox{ with } \enskip |f| \leq \lambda \,\, \, \,  \mu-a.e.\label{eq:czmeasure4}
    \end{align}
    \item For each $j \in J$, set 
    \begin{align}
        & R_j := 6D_j \label{eq:czmeasure5}\\
        & w_j := \chara_{D_j} \left(\sum_{i \in J} \chara_{D_i}\right)^{-1}.\label{eq:czmeasure6}
    \end{align}
    There exsits a family of functions $\{b_j\}_{j \in J}$ with 
    \begin{align}
        \spt(b_j) \subset R_j,\label{eq:czmeasure7}
    \end{align}
    each one with constant sign, such that
    \begin{align}
    & \int b_j d\mu = \int w_j d\nu \label{eq:czmeasure8} \\
    & \|b_j \|_{L^\infty(\mu)} \mu(R_j) \leq c|\nu|(D_j) \label{eq:czmeasure9} \\
    & \sum_{j \in J} |b_j| \leq c \lambda. \label{eq:czmeasure10}
    \end{align}
\end{enumerate}
\end{theorem}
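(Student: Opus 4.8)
The plan is to run the classical Calder\'on--Zygmund stopping-time construction for the non-homogeneous pair $(\nu,\mu)$, in the spirit of \cite{tolsa-book}, and then to build the pieces $b_j$ by hand as step functions. \textbf{Selection of the cubes.} For $x\in\R^d$ set $M\nu(x):=\sup\{\,|\nu|(Q)/\mu(2Q)\ :\ Q\in\dcubes,\ x\in Q\,\}$, with the convention that the ratio is $+\infty$ when $\mu(2Q)=0<|\nu|(Q)$, and let $\Omega:=\{x: M\nu(x)>\lambda/2^{d+1}\}$. For each $x\in\Omega$ there is a \emph{largest} dyadic cube $Q\ni x$ with $|\nu|(Q)>\tfrac{\lambda}{2^{d+1}}\mu(2Q)$: such a cube exists because $\nu$ has compact support and, by $n$-AD-regularity, $\mu(2Q)\to\|\mu\|$ (possibly $+\infty$) as $\ell(Q)\to\infty$, so the hypothesis $\lambda>2^{d+1}\|\nu\|/\|\mu\|$ forces the defining inequality to fail once $\ell(Q)$ is large enough. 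Let $\{D_j\}_j$ be the family of these maximal cubes. Being maximal dyadic cubes they are pairwise disjoint, hence $\sum_j\chara_{D_j}\le 1$, which is \eqref{eq:czmeasure1}. (In the general, merely doubling, formulation one instead selects stopping cubes satisfying an auxiliary doubling requirement; then disjointness is weakened to the bounded overlap $\sum_j\chara_{D_j}\lesssim 1$, established exactly as in \cite{tolsa-book}.)

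\textbf{Properties in part (1).} Estimate \eqref{eq:czmeasure2} is simply the stopping inequality defining $D_j$, together with $\nu\le|\nu|$. For \eqref{eq:czmeasure3}, fix $\eta>2$: the dyadic parent $\hat D_j\supsetneq D_j$ is not a stopping cube (else it would contradict maximality of $D_j$), so $|\nu|(\hat D_j)\le\tfrac{\lambda}{2^{d+1}}\mu(2\hat D_j)$; one then covers $\eta D_j$ by a bounded number of dyadic cubes of side $\sim\eta\,\ell(D_j)$, each strictly containing $D_j$ and hence non-stopping, sums, and uses the doubling of $\mu$ together with AD-regularity to reabsorb the sum into $\mu(2\eta D_j)$. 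For \eqref{eq:czmeasure4}: on $\R^d\setminus\bigcup_jD_j=\R^d\setminus\Omega$ one has $|\nu|(Q)\le\tfrac{\lambda}{2^{d+1}}\mu(2Q)\lesssim\lambda\,\mu(Q)$ for every dyadic $Q$ meeting that set, so, $\mu$ being doubling, the Lebesgue differentiation theorem gives $\nu=f\mu$ there with $|f|\le\lambda$ $\mu$-a.e.; extend $f$ by $0$ to get $f\in L^1(\mu)$.

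\textbf{Construction of the $b_j$ (part (2)).} The $w_j=\chara_{D_j}(\sum_i\chara_{D_i})^{-1}$ satisfy $0\le w_j\le\chara_{D_j}$ and $\sum_jw_j=\chara_{\bigcup_jD_j}$. First note $\mu(R_j)=\mu(6D_j)>0$: indeed $6D_j\supset 2\hat D_j$, and if $\mu(2\hat D_j)=0$ the non-stopping inequality for $\hat D_j$ would force $|\nu|(D_j)=0$, contradicting \eqref{eq:czmeasure2}. Now set $b_j:=\bigl(\tfrac{1}{\mu(R_j)}\int w_j\,d\nu\bigr)\chara_{R_j}$. This has constant sign, is supported in $R_j$ (so \eqref{eq:czmeasure7} holds), satisfies $\int b_j\,d\mu=\int w_j\,d\nu$ by construction (that is \eqref{eq:czmeasure8}), and obeys $\|b_j\|_{L^\infty(\mu)}\,\mu(R_j)=\bigl|\int w_j\,d\nu\bigr|\le|\nu|(D_j)$, which is \eqref{eq:czmeasure9}. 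For \eqref{eq:czmeasure10} one uses $\bigl|\int w_j\,d\nu\bigr|\le|\nu|(D_j)\le|\nu|(\hat D_j)\le\tfrac{\lambda}{2^{d+1}}\mu(2\hat D_j)\le\tfrac{\lambda}{2^{d+1}}\mu(R_j)$, so each summand $|b_j(x)|$ is $\le\lambda/2^{d+1}$, and then one controls the overlap of $\{R_j\}$ at a fixed point by grouping the cubes by scale and invoking AD-regularity, obtaining $\sum_j|b_j|\le c\lambda$.

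\textbf{Main obstacle.} Everything becomes routine bookkeeping once the overlap of the selected family is under control; the genuinely delicate points are exactly the bounded-overlap statement \eqref{eq:czmeasure1} and the summability \eqref{eq:czmeasure10} in the general setting, where the stopping cubes are not literally disjoint and one must use Tolsa's refined selection of ``doubling'' stopping cubes together with the doubling property of $\mu$. For those two points I would follow \cite{tolsa-book} directly rather than reprove them.
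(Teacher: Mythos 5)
The paper does not prove this theorem: it states it and refers to Tolsa's book, so there is no proof of the paper's to compare against. Your overall plan is the right classical one, and your treatment of part (2) — taking $b_j$ to be a constant multiple of $\chara_{R_j}$ — is exactly what one does, and is correct. The genuine gap is in the selection of the $D_j$, specifically in your argument for \eqref{eq:czmeasure3}.

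You claim that covering $\eta D_j$ by dyadic cubes of sidelength comparable to $\eta\,\ell(D_j)$ produces cubes ``each strictly containing $D_j$ and hence non-stopping.'' That is not so: among dyadic cubes of a fixed scale exactly one contains $D_j$; the rest are disjoint from $D_j$, so there is no reason for them to be non-stopping, and they may carry a lot of $|\nu|$-mass. In fact \eqref{eq:czmeasure3} can genuinely fail for the maximal-dyadic selection. Take $n=d=1$, $\mu=\mathcal{L}^1$ on $\R$ (so $\|\mu\|=\infty$ and any $\lambda>0$ is admissible), $\lambda=1$, and $\nu=\delta_{1/2}+2\,\delta_{-1/2}$. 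Then $D=[0,1)$ is a maximal dyadic stopping cube: $|\nu|(D)=1>\tfrac{\lambda}{4}\mu(2D)=\tfrac12$, while $|\nu|([0,2))=1\le\tfrac{\lambda}{4}\mu([-1,3))=1$ and all larger ancestors are likewise non-stopping. Yet with $\eta=4$ one gets $\eta D=[-3/2,5/2)\ni -1/2$, so $|\nu|(\eta D)=3$, whereas $\tfrac{\lambda}{2^{d+1}}\mu(2\eta D)=\tfrac14\cdot 8=2<3$. So your selected cube violates \eqref{eq:czmeasure3}. (As an aside, \eqref{eq:czmeasure3} should read $|\nu|$ rather than $\nu$, since $\nu$ is signed or complex; and the requirement $\{D_j\}\subset\dcubes$ in the paper's statement looks like a slip, since Tolsa's original cubes are not dyadic.)

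The fix is Tolsa's ``almost maximal sidelength'' selection rather than the maximal dyadic one: for each $x$ in the level set $\{M\nu>\lambda/2^{d+1}\}$ pick a cube $Q_x$ \emph{centered at $x$} with $|\nu|(Q_x)>\tfrac{\lambda}{2^{d+1}}\mu(2Q_x)$ and with $\ell(Q_x)$ greater than half the supremum of sidelengths of all such cubes centered at $x$ (the supremum is finite precisely because $\lambda>2^{d+1}\|\nu\|/\|\mu\|$). Then for $\eta>2$ the cube $\eta Q_x$ is again centered at $x$ and has sidelength beyond the supremum, which gives \eqref{eq:czmeasure3} immediately. Disjointness is lost, so one extracts a countable Besicovitch subfamily to obtain the bounded overlap \eqref{eq:czmeasure1} with an implicit constant — which is exactly why the statement says $\lesssim 1$ rather than $\le 1$. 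Once this is in place, \eqref{eq:czmeasure2}, \eqref{eq:czmeasure4}, and your construction of the $b_j$ (with the estimate $\|b_j\|_\infty\le\lambda/2^{d+1}$ now coming from \eqref{eq:czmeasure3} with $\eta=3$) go through as you describe.
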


\begin{proof}[Proof of Proposition \ref{proposition:L1weak}]
We keep the notation as in Theorem \ref{theorem:czmeasure}. Let $\nu \in M(\R^d)$ and fix $\lambda > 2^{d+1}\frac{\|\nu\|}{\|\mu\|}$. We write
\begin{align*}
    d\nu & = \chara_{\R^d \setminus \cup_{j \in J} D_j} d\nu + \chara_{\cup_{j \in J}}d\nu 
     = \chara_{\R^d \setminus \cup_{j \in J} D_j}\, f\, d\mu + \sum_{j \in J} w_j \, d\nu\\  
     &= \chara_{\R^d \setminus \cup_{j \in J} D_j}\, f\, d\mu + \sum_{j \in J} b_j \, d\mu + \sum_{j \in J} \left(w_j d\nu - b_j d\mu\right).
\end{align*}
Thus if we set 
\begin{align*}
   & F:=\bigcup_{j \in J} D_j , \, \, 
     g:= f \chara_{F^c} + \sum_{j \in J} b_j,\, \, 
     d\beta := \sum_{j \in J} (w_j d\nu - b_j d\mu),
\end{align*}
we may let 
\begin{align} \label{eq:L1weak1}
    d\nu = g \,d\mu + d\beta.
\end{align}

Let $2F = \cup_{j \in J} 2D_j$. Notice first that, because \eqref{eq:czmeasure2}, 
\begin{align*}
    \mu\ps{\left\{ x \in 2F  \, |\, C\nu(x) > \lambda\right\}} & \leq \mu( 2F) 
     \leq \sum_{j \in J} \mu(2D_j ) 
    \leq \frac{2^{d+1}}{\lambda} |\nu|(D_j)
     \lesssim_d \frac{\|\nu\|}{\lambda}.
\end{align*}
Hence we may work on $(2F)^c$ only.
We split $C_\nu$ as suggested by \eqref{eq:L1weak1}: 
\begin{align*}
C_\nu(x) & = \left( \int_0^\infty \left| t^{-n} \int_{B(x,t)} \quotxy \left( g(y)d\mu(y) + d \beta(y)\right) \right|^2 \right)^{1/2}\\
& \leq \left( \int_0^\infty \left| t^{-n} \int_{B(x,t)} \quotxy  g(y)\, d\mu(y) \right|^2 \right)^{1/2}\\
& \enskip \enskip + \left( \int_0^\infty \left| t^{-n} \int_{B(x,t)} \quotxy  d \beta(y) \right|^2  \right)^{1/2}\\
& = C_\mu(g)(x) + C_\beta(x).
\end{align*}

Then
\begin{align*}
    & \mu\left(\left\{ x \in (2F)^c \, |\, C_\nu(x) >\lambda\right\} \right)  
     \leq \mu(\left( \left\{ x \in (2F)^c \, |\, C_\mu(g)(x) > \frac{\lambda}{2} \right\} \right) + \mu\left(\left\{ x \in  (2F)^c \, |\, C_\beta(x) > \frac{\lambda}{2} \right\} \right)\\
    & = : A + B.
\end{align*}

\subsubsection{Estimates for $A$}
This is easily done by noticing that $g \in L^2(\mu)$, since $g \in L^\infty(\mu)$ and $\mu(\R^d)< \infty$. Thus, in particular, 
\begin{align*}
\mu\ps{\left\{ x \in (2F)^c \, |\, C_\mu(g)(x) > \lambda/2 \right\}} \leq \frac{C}{\lambda^2} \int |g|^2\, d\mu \lesssim \frac{1}{\lambda} \int|g|\, d\mu,
\end{align*}
since, by \eqref{eq:czmeasure4} and \eqref{eq:czmeasure10}, 
\begin{align*}
    \|g\|_{L^\infty(\mu, )} & \leq \|f\chara_{F^c}\|_{L^\infty(\mu)} + \left\|\sum_{j \in J}b_j\right\|_{L^\infty(\mu)} 
     \leq c\lambda.
\end{align*}
Moreover, using some of the properties listed in Theorem \ref{theorem:czmeasure}, 
\begin{align*}
    \int |g| \, d\mu & \leq \int |f\chara_{F^c}| \, d\mu + \int \left| \sum_{j \in J} b_j \, d\mu \right|
     = |\nu|(F^c) + \sum_{j \in J} \int |b_j| \, d\mu 
     \leq \|\nu\| + \sum_{j \in J} \|b_j\|_\infty \mu(R_j) \\
    & \leq \|\nu\| + \sum_{j \in J} c |\nu|(D_j) 
     \lesssim \|\nu\|.
\end{align*}
Thus
\begin{align*}
    \mu\ps{\left\{ x \in (2F)^c \, |\, C_\mu(g)(x) > \lambda/2 \right\}} \lesssim \frac{\|\nu\|}{\lambda}.
\end{align*}

\subsubsection{Estimates for $B$}
Set $E:= (2F)^c$. By Chebyshev's inequality, 
\begin{align*}
    \mu\ps{\ck{ x \in E \, |\, C_\beta(x) > \lambda/2}} & \leq \frac{2}{\lambda} \int_E C_\beta(x) \, d\mu(x) \\
    & \leq \frac{2}{\lambda} \sum_{j \in J} \int_{\R^d \setminus 2 R_j} C_{\beta_j} (x) \, d\mu(x) 
     + \frac{2}{\lambda} \sum_{j \in J} \int_{2R_j \setminus 2D_j} C_{\beta_j}(x) \, d\mu(x) 
     =: B_1 + B_2.
\end{align*}

\textit{Estimates for $B_1$.}
We write
\begin{align*}
    \int_{\R^d \setminus R_j} C_{\beta_j}(x) & =  \int_{\R^d \setminus R_j} \left(\int_0^\infty \av{t^{-n-1} \int_{B(x,t)} (x-y) \, d \beta_j(y)}^2 \, \dt\right)^{1/2} \, d\mu(x)\\
    & =  \int_{\R^d \setminus R_j} \ps{\int_{\ck{t: R_j \subset B(x,t)^c}} \av{t^{-n-1} \int_{B(x,t)} (x-y) \, d \beta_j(y)}^2 \, \dt }^{1/2} \, d\mu(x)
    \\ & +  \int_{\R^d \setminus R_j} \ps{\int_{\ck{t: R_j \subset B(x,t)}} \av{t^{-n-1} \int_{B(x,t)} (x-y) \, d \beta_j(y)}^2 \, \dt }^{1/2} \, d\mu(x)\\
    & +  \int_{\R^d \setminus R_j} \ps{\int_{\ck{t: R_j \cap B(x,t)}} \av{t^{-n-1} \int_{B(x,t)} (x-y) \, d \beta_j(y)}^2 \, \dt }^{1/2} \, d\mu(x)\\
    & := B_{1,1} + B_{1,2} + B_{1,3} 
\end{align*}

Clearly $B_{1,1} =0$; moreover, since $x\in \R^d \setminus 2R_j$, if $t\leq \dist(x, R_j)$, then $t \in \ck{t: R_j \subset B(x,t)^c}$. 

We estimate $B_{1,2}$. First, notice that $\beta_j (R_j) = 0$. Thus, if $t \in \ck{t: R_j \cap B(x,t)}$ and letting $c_j$ denote the center of $R_j$, we get
\begin{align*}
    \av{ \int_{B(x,t)}t^{-n-1} (x-y) \, d \beta_j (y) } & = \av{ \int_{B(x,t)} t^{-n-1} (c_j - y) \, d\beta_j(y)} 
    \lesssim \ps{\int_{B(x,t)} t^{-n}\frac{\ell(R_j)}{|x-c_j|} d|\beta_j|(y)} \\ 
    & = t^{-n}\frac{\ell(R_j)|\beta_j|(R_j)}{|x-c_j|}.
\end{align*}
Since, if $t \in \ck{t: R_j \cap B(x,t)}$, then $t>\dist(x, R_j)$, then
\begin{align*}
C_{\beta_j}(x)^2 & \lesssim \int_{\dist(x, R_j)}^\infty \left(t^{-n}\frac{\ell(R_j)|\beta_j|(R_j)}{|x-c_j|}\right)^2 \, \dt 
 = \frac{\ell(R_j)^2|\beta_j|(R_j)^2}{|x-c_j|^2} \int_{\dist(x, R_j)}^\infty t^{-n-1} \, dt \\
& \lesssim \frac{\ell(R_j)^2|\beta_j|(R_j)^2}{|x-c_j|^2} \frac{1}{|x-c_j|^{2n}}.
\end{align*}
Recalling that $\mu$ is $n$-uniformly rectifiable and thus $n$-AD-regular, 
\begin{align*}
\int_{\R^d \setminus 2R_j} \frac{1}{|x-c_j|^{n+1}} \, d\mu(x) & = \sum_{k=0}^\infty \int_{2^k (2\ell(R_j)) \leq |c_j - x| \leq 2^{k+1}(2\ell(R_j))} \frac{1}{|x-c_j|^{n+1}} \, d\mu(x) \\
& \leq \sum_{k=0}^\infty \frac{\mu(B(c_j, 2^{k+1}2\ell(R_j)))}{2^{(n+1)k} (2\ell(R_j))^{n+1} }\\
&\lesssim_{c_0} \sum_{k=0}^\infty \frac{2^{(k+1)n}(2\ell(R_j))^n}{2^{(n+1)k} (2\ell(R_j))^{n+1} }\\
&\lesssim \frac{1}{\ell(R_j)}.
\end{align*}
All in all, we then see that
\begin{align*}
    \int_{\R^d \setminus 2R_j} C_{\beta_j}(x) \, d\mu(x) & \lesssim \ell(R_j) |\beta_j|(R_j)\int_{\R^d \setminus 2R_j}\frac{1}{|x-c_j|^{n+1}} \, d\mu(x)
    \lesssim |\beta_j|(R_j).
\end{align*}

One can easily estimate $B_{1,3}$ by arguing as Tolsa and Toro in \cite{tolsatoro}; see the treatment of the first term on the right hand side of equation 5.10 at page 10 there.

\bigskip 

\textit{Estimates for $B_2$.}
Applying Cauchy-Schwartz, We write
\begin{align*}
    \int_{2R_j \setminus 2R_j} C_{\beta_j}(x) \, d\mu(x) & \leq \mu(R_j)^{n/2} \left( \int_{2R_j \setminus 2R_j} |C_{\beta_j}(x)|^2 \, d\mu(x)\right)^{1/2} \\
    & \leq \mu(R_j)^{n/2} \left( \int_{2R_j \setminus 2R_j} |C_{\mu}(b_j)(x)|^2 \, d\mu(x)\right)^{1/2} \\
    & \enskip \enskip + \mu(R_j)^{n/2} \left( \int_{2R_j \setminus 2R_j} |C_{\nu}(w_j)(x)|^2 \, d\mu(x)\right)^{1/2}.
\end{align*}

Since  $b_j \in L^\infty(\mu)$ with compact support, then $b_j \in L^2(\mu)$. Thus, by Proposition \ref{proposition:L2}, we see that
\begin{align*}
    \mu(R_j)^{n/2} \left( \int_{2R_j \setminus 2R_j} |C_{\mu}(b_j)(x)|^2 \, d\mu(x)\right)^{1/2}  & \lesssim \mu(R_j)^{n/2} \|b_j\|_{L^2(\mu)} \\
    & \leq \mu(R_j)^{n/2} \mu(R_j)^{n/2} \|b_j\|_\infty\\
    & \leq |\nu|(D_j).
\end{align*}
On the other hand, recalling that $\spt(w_j) \subset D_j$ and $\|w_j\|_\infty \leq 1$,  
\begin{align*}
C_\nu(w_j)(x)^2 & = \int_0^\infty \av{t^{-n} \int_{B(x,t)} (x-y) \, w_j(y) \, d\nu(y) }^2 \, \dt
 \leq \int_{\ell(R_j)}^\infty \av{t^{-n} \int_{B(x,t)} (x-y) \, w_j(y) \, d\nu(y) }^2 \, \dt\\
& \leq \int_{\ell(R_j)}^\infty \frac{\ell(R_j)^2 |\nu|(D_j)^2}{t^{2n+2}}\, d\nu(y) \, \dt 
 \leq \frac{|\nu|(D_j)^2}{\ell(R_j)^{2n}}.
\end{align*}
Hence, we see that
\begin{align*}
    \mu(R_j)^{n/2} \left( \int_{2R_j \setminus 2R_j} |C_{\nu}(w_j)(x)|^2 \, d\mu(x)\right)^{1/2} & \lesssim \mu(R_j)^{n/2} \mu(R_j)^{n/2} \frac{|\nu|(D_j)}{\ell(R_j)^n} 
    \lesssim |\nu|(D_j).
\end{align*}

All in all, recalling \eqref{eq:czmeasure1}, we see that
\begin{align*}
B_1 + B_2 \lesssim \frac{1}{\lambda} \sum_{j \in J} |\nu|(D_j) \lesssim \frac{\|\nu\|}{\lambda}.
\end{align*}
\end{proof}

As in the proof of Theorem 5.1 in \cite{tolsatoro}, we have the following corollary.
\begin{corollary}
Proposition \ref{proposition:L1weak} holds for any $\nu \in M(\R^d)$. 
\end{corollary}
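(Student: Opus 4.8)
The plan is to remove the compact-support hypothesis in Proposition~\ref{proposition:L1weak} by a routine exhaustion. Fix $\nu\in M(\R^d)$ and $\lambda>0$. If $\lambda\le 2^{d+1}\|\nu\|/\|\mu\|$, then $\|\nu\|/\lambda\gtrsim\|\mu\|\ge\mu(\{x\in\R^d:C_\nu(x)>\lambda\})$ (here we use that $\mu$ is finite) and there is nothing to prove, so we may assume $\lambda>2^{d+1}\|\nu\|/\|\mu\|$, which is precisely the range in which the Calder\'on--Zygmund decomposition of Theorem~\ref{theorem:czmeasure}, and hence Proposition~\ref{proposition:L1weak}, is available. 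For $i\in\N$ I would put $\nu_i:=\nu|_{\overline B(0,i)}$; each $\nu_i$ has compact support, $\|\nu_i\|\le\|\nu\|$, and $\nu_i$ increases to $\nu$. Proposition~\ref{proposition:L1weak} then gives $\mu(\{x:C_{\nu_i}(x)>\lambda\})\le\|\nu_i\|/\lambda\le\|\nu\|/\lambda$ for every $i$, and the task is to pass to the limit $i\to\infty$.

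The only subtlety is that $\nu\mapsto C_\nu$ is not linear, so a direct comparison between $C_\nu$ and $C_{\nu_i}$ is unavailable; instead I would pass through the vector-valued quantity
\[
\vec C_\sigma(x,t):=\frac{1}{t^{n+1}}\int_{B(x,t)}(x-y)\,d\sigma(y)\in\R^d,
\]
which depends additively on the measure $\sigma$ and satisfies $C_\sigma(x,t)=|\vec C_\sigma(x,t)|$ and $C_\sigma(x)=\big(\int_0^\infty|\vec C_\sigma(x,t)|^2\,\frac{dt}{t}\big)^{1/2}$. For each fixed $(x,t)$ we have $|\nu|(B(x,t))<\infty$ and $|(x-y)\chara_{\overline B(0,i)}(y)|\le t$ on $B(x,t)$, so dominated convergence yields $\vec C_{\nu_i}(x,t)\to\vec C_\nu(x,t)$ as $i\to\infty$; Fatou's lemma then gives $C_\nu(x)^2\le\liminf_{i\to\infty}C_{\nu_i}(x)^2$, i.e. $C_\nu(x)\le\liminf_{i\to\infty}C_{\nu_i}(x)$ for every $x\in\R^d$.

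Finally, combining the two steps: if $C_\nu(x)>\lambda$ then $\liminf_i C_{\nu_i}(x)>\lambda$, so there is $N=N(x)$ with $C_{\nu_i}(x)>\lambda$ for all $i\ge N$; hence
\[
\{x\in\R^d:C_\nu(x)>\lambda\}\subseteq\bigcup_{N\in\N}E_N,\qquad E_N:=\bigcap_{i\ge N}\{x\in\R^d:C_{\nu_i}(x)>\lambda\},
\]
where the $E_N$ increase with $N$. Since $E_N\subseteq\{x:C_{\nu_i}(x)>\lambda\}$ for every $i\ge N$, the estimate for compactly supported measures gives $\mu(E_N)\le\|\nu\|/\lambda$ uniformly in $N$, and letting $N\to\infty$ (continuity of $\mu$ from below) yields $\mu(\{x:C_\nu(x)>\lambda\})\le\|\nu\|/\lambda$. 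This is essentially the exhaustion argument of Tolsa and Toro in the proof of Theorem~5.1 of~\cite{tolsatoro}; the main, though rather mild, obstacle is the nonlinearity of $C_\nu$ in $\nu$, which is what forces the detour through $\vec C_\nu$ and Fatou's lemma rather than a one-line monotonicity argument.
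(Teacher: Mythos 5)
Your argument is correct, and it does take a genuinely different route from the paper's. Both proofs exhaust $\nu$ by the truncations $\nu|_{B(0,N)}$, but the mechanisms for passing to the limit differ. The paper splits $\nu = \nu_2 + \chara_{B(0,N_2)^c}\nu$ and, for $x$ in a fixed ball $B(0,N_1)$, bounds the tail contribution $C_\nu(\chara_{B(0,N_2)^c})(x)$ \emph{quantitatively} by $\|\nu\|/|N_1-N_2|^{n}$, choosing $N_2$ large enough (depending on $N_1$, $\lambda$, and $\|\nu\|$) to make this $\le C\lambda/2$; the triangle inequality for the vector-valued $\vec C$ in $L^2(dt/t)$ then reduces the level set of $C_\nu$ over $B(0,N_1)$ to a level set of $C_{\nu_2}$, and one sends $N_1\to\infty$. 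You instead pass through Fatou's lemma (applied, as you correctly note, to the linear, $\R^d$-valued quantity $\vec C_\sigma(x,t)$ rather than to $C_\sigma$ itself) to obtain the pointwise inequality $C_\nu(x)\le\liminf_i C_{\nu_i}(x)$ everywhere, and then upgrade this to a level-set inclusion and apply continuity of $\mu$ from below. Your route is softer: it avoids having to tune the truncation radius against $\lambda$ and spares you the power-counting computation for the tail. The paper's route is more elementary and self-contained in that it gives an explicit, uniform-in-$x$ bound on the discrepancy between $C_\nu$ and $C_{\nu_2}$. Two small remarks on your writeup: your initial reduction correctly observes that the range $\lambda\le 2^{d+1}\|\nu\|/\|\mu\|$ is trivial, but note this gives $\mu(\{C_\nu>\lambda\})\le\|\mu\|\le 2^{d+1}\|\nu\|/\lambda$, i.e. the conclusion only up to the dimensional constant $2^{d+1}$ (which is consistent with the implicit constants throughout the paper, but should be flagged since Proposition~\ref{proposition:L1weak} is stated with a bare $\|\nu\|/\lambda$); and the step from $\liminf_i C_{\nu_i}(x)>\lambda$ to ``there is $N$ with $C_{\nu_i}(x)>\lambda$ for all $i\ge N$'' is correct precisely because the inequality is strict, which is worth stating explicitly since it is where the nonlinearity of $\nu\mapsto C_\nu$ would otherwise bite.
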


\begin{proof}
Fix $N_1$ be an arbitrary positive number. Pick $N_2 \geq 2N_1$ and such that $|N_1-N_2|> \frac{2\|\nu\|}{\lambda}$. We see that, if $x \in B(0, N_1)$, 
\begin{align*}
    C_\nu(\chara_{\R^d\setminus B(0, N_2)}) (x)^2& = \int_0^\infty \av{t^{-n}\int_{B(x,t)} \quotxy \, \chara_{\R^d \setminus B(0,N_2)} \, d\nu(y)} \, \dt \\
    & \leq \int_{|N_1-N_2|}^\infty \av{t^{-n}\int_{B(x,t)} \quotxy \, \chara_{\R^d \setminus B(0,N_2)} \, d\nu(y)} \, \dt \\
    & \lesssim |\nu|(\R^d\setminus B(0, N_2))\int_{|N_1-N_2|}^\infty \frac{1}{t^{2n+1}} \, dt \\
    & = \frac{|\nu|(\R^d\setminus B(0, N_2))}{|N_1-N_2|^{2n}}.
\end{align*}
Thus, by our choice of $N_2$, we see that
\begin{align*}
    C_\nu(\chara_{\R^d\setminus B(0, N_2)}) (x) \leq C \frac{\lambda}{2}.
\end{align*}
Notice that, here, the constant $C$ does \textit{not} depend on $N_1$. 

Set $\nu_2:= \nu|_{B(0, N_2)}$. We see that
\begin{align*}
    \mu\ps{\ck{x \in B(0, N_1) \, | C_\nu(x) > \lambda }} & \leq \mu\ps{\ck{ x \in B(0,N_1) \, |\, C_{\nu_2}(x) > \lambda/2 }} \\
    & \lesssim \frac{\|\nu_2\|}{\lambda} 
     \leq \frac{\|\nu\|}{\lambda}.
\end{align*}
Since these estimates do not depend on $N_1$, we may let $N_1 \to \infty$.
\end{proof}

\section{Proof of Theorem \ref{t:UR-Omega}}
What is left to do is to prove the remaining direction in Theorem \ref{t:UR-Omega}. That is, we need to prove the following. 
\begin{proposition}\label{p:C-UR-Omega}
Let $\mu$ be an Ahlfors 1-regular measure in $\C$. Let $\Omega$ be as in the statement of Theorem \ref{t:UR-Omega}. If $|C^1_{\Omega, \mu, \phi}(x,t)|^2 \, \dt \, d\mu(x)$ is a Carleson measure on $\spt(\mu) \times (0, \infty)$, then $\mu$ is uniformly $1$-rectifiable. 
\end{proposition}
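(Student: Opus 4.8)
The plan is to argue by contradiction, via a blow-up (tangent-measure) argument that reduces the statement to the classification of $\Omega$-symmetric measures. This is precisely the scheme by which the case $\Omega=\mathrm{Id}$ is handled in \cite{jnt}; the only new ingredient is that Mattila's classification of symmetric measures is replaced by Theorem 1.4 of \cite{villa19}. Suppose, then, that the $1$-AD-regular measure $\mu$ on $\C$ is \emph{not} uniformly rectifiable. By the David--Semmes characterization of uniform rectifiability through the $L^2$ flatness coefficients (equivalently, through Tolsa's $\alpha$-numbers, Theorem \ref{theorem:tolsaalpha}), the measure $\beta_\mu^{2,1}(x,t)^2\,\dt\,d\mu$ is not a Carleson measure on $\spt(\mu)\times(0,\infty)$. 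Combining this with the hypothesis that $|C^1_{\Omega,\mu,\phi}|^2\,\dt\,d\mu$ \emph{is} a Carleson measure, the standard corona / stopping-time extraction (carried out as in \cite{singularintegrals} and \cite{jnt}) produces $\varepsilon_0>0$ and a sequence of cubes $Q_k\in\mucubes$ such that $\beta_\mu^{2,1}(B_{Q_k})\geq\varepsilon_0$ for every $k$, while the Carleson mass of $|C^1_{\Omega,\mu,\phi}|^2\,\dt\,d\mu$ over the Carleson box of $Q_k$ is at most $\varepsilon_k\,\mu(Q_k)$ with $\varepsilon_k\downarrow 0$.

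Set $x_k:=z_{Q_k}$, $r_k:=\ell(Q_k)$, $T_k(y):=(y-x_k)/r_k$, and $\mu_k:=r_k^{-1}(T_k)_\#\mu$. By $1$-AD-regularity the measures $\mu_k$ are uniformly locally finite and uniformly AD-regular, so along a subsequence $\mu_k\rightharpoonup\nu$, where $\nu$ is $1$-AD-regular and $0\in\spt(\nu)$. Since the $\mu_k$ are $\varepsilon_0$-far from being flat at unit scale and flatness is preserved under weak limits of uniformly AD-regular measures, $\nu$ is not flat. On the other hand, the $C$-number is scale invariant, $C^1_{\Omega,\mu_k,\phi}(T_k(y),t/r_k)=C^1_{\Omega,\mu,\phi}(y,t)$, so for each fixed $\rho>0$
\begin{align*}
&\int_{B(0,\rho)}\int_0^{\rho}|C^1_{\Omega,\mu_k,\phi}(z,t)|^2\,\dt\,d\mu_k(z)\\
&\qquad=\frac{1}{r_k}\int_{B(x_k,\rho r_k)}\int_0^{\rho r_k}|C^1_{\Omega,\mu,\phi}(y,s)|^2\,\frac{ds}{s}\,d\mu(y)\longrightarrow 0\qquad(k\to\infty)
\end{align*}
by the choice of the $Q_k$. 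The integrand is continuous, and the uniform AD-regularity keeps the mass from escaping; letting $k\to\infty$ yields $\int_0^\infty|C^1_{\Omega,\nu,\phi}(z,t)|^2\,\dt=0$ for $\nu$-a.e. $z$, and hence, since $z\mapsto C^1_{\Omega,\nu,\phi}(z,t)$ is continuous and $\{z:C^1_{\Omega,\nu,\phi}(z,t)=0\ \text{for all }t>0\}$ is closed of full $\nu$-measure, $C^1_{\Omega,\nu,\phi}(z,t)=0$ for all $z\in\spt(\nu)$ and all $t>0$.

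Next I would upgrade this to exact $\Omega$-symmetry of $\nu$. Fix $z\in\spt(\nu)$; with $\phi_N(x)=e^{-|x|^{2N}}$ and $K(x)=|x|\,\Omega(x/|x|)$, the identity $C^1_{\Omega,\nu,\phi}(z,t)=0$ for all $t>0$ reads $\int K(z-y)\,e^{-s|z-y|^{2N}}\,d\nu(y)=0$ for all $s>0$ (put $s=t^{-2N}$). Thus the Laplace transform in $s$ of the push-forward of the $\C$-valued measure $K(z-\cdot)\,\nu$ under $y\mapsto|z-y|^{2N}$ vanishes identically; by injectivity of the Laplace transform this push-forward is the zero measure, so $\int_{B(z,r)}K(z-y)\,d\nu(y)=0$ for every $r>0$. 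This is exactly the statement that $\nu$ is $\Omega$-symmetric. Theorem 1.4 of \cite{villa19} (applicable because $\delta_\Omega$ lies below the universal threshold of Theorem \ref{t:UR-Omega}) then forces $\nu$ to be flat, $\nu=c\,\mathcal{H}^1|_L$ for a line $L\subset\C$ and a constant $c>0$ — contradicting that $\nu$ is not flat. Hence $\mu$ is uniformly $1$-rectifiable, which proves Proposition \ref{p:C-UR-Omega} and, together with Proposition \ref{p:UR-C-Omega}, completes the proof of Theorem \ref{t:UR-Omega}.

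The step carrying the real weight is the blow-up extraction in the first paragraph: producing one sequence of cubes that is simultaneously quantitatively non-flat at its own scale (so the blow-up is non-flat) and carries almost no $C$-Carleson mass in its box (so the blow-up is $\Omega$-symmetric). This is the David--Semmes corona machinery and must be organized with the quantifiers in the correct order; what makes it go through is that the $C$-Carleson mass is \emph{finite} while the $\beta_2$-mass is \emph{infinitely concentrated} somewhere, and one extracts the overlap — just as for $\Omega=\mathrm{Id}$ in \cite{jnt}. Everything else — the uniform AD-regularity of $\nu$, the stability of flatness under weak limits, the scale invariance of $C_{\Omega,\mu,\phi}$, and the Laplace-transform passage from the smoothed $C$-number to genuine $\Omega$-symmetry — is routine.
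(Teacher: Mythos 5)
Your argument is correct and matches the paper's (very terse) proof in all essentials: a blow-up/compactness argument reducing to a limit measure with vanishing smoothed $C$-number, an upgrade from the smoothed identity to genuine $\Omega$-symmetry (your Laplace-transform computation is exactly what the paper gestures at when it cites Lemma 3.6 of \cite{tolsa2014}), and then Theorem 1.4 of \cite{villa19} to conclude flatness and close the contradiction. The only cosmetic difference is that you phrase the extraction step via failure of the $\beta_2$/$\alpha$ Carleson condition and cite \cite{jnt}/\cite{singularintegrals}, whereas the paper simply points to the compactness scheme of \cite{tolsa2014}; both amount to the same corona-type pigeonholing.
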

Recall that 
\begin{align*}
    C_{\Omega, \mu, \phi}^1(x,t) := t^{-1} \int \frac{|x-y|\Omega((x-y)/|x-y|)}{t} e^{-\av{\frac{x-y}{t}}^{2N}} \, d\mu(y). 
\end{align*}
The proof of Proposition \ref{p:C-UR-Omega} is a standard compactness argument. For example, one can follow \cite{tolsa2014} almost verbatim. There is one important difference, however. In the limit, one ends up with a measure  which satisfies 
$C_{\Omega, \nu, \phi}^1(x,t)=0$ for all $t>0$ and $x \in \spt(\nu)$. It then follows from an argument similar to the proof of Lemma 3.6 in \cite{tolsa2014}, that $\nu$ is an $\Omega$-symmetric measure, that is, a measure satisfying
$C_{\Omega, \nu}(x,t)=0$ for all $t>0$ and $x \in \spt(\nu)$. Then one needs to appeal to Theorem 1.4 in \cite{villa19} to conclude that such a measure is flat. With this, one can close the compactness argument and continue as in \cite{tolsa2014}.  

The remaining direction of Theorem \ref{t:UR-Omega} follows from Proposition \ref{p:C-UR-Omega} since 
\begin{align*}
    \int_{B}\int_0^{r(B)} |C_{\Omega, \mu, \phi}^1(x,t)|^2 \, \dt \, d\mu(x) \lesssim \int_B \int_0^{r(B)} |C_{\Omega, \mu}^1(x,t) |^2 \, \dt \, d\mu(x).
\end{align*}
This is a standard fact. A proof can be found for example in \cite{tolsa2014}, Corollary 3.12.

\end{document}